\patchcmd{\@setref}{\bfseries ??}{\bfseries\color{red} ?????}{}{}
\def\l@subsection{\@tocline{2}{0pt}{2.5pc}{5pc}{}}
\renewcommand\tocchapter[3]{%
  \indentlabel{\@ifnotempty{#2}{\ignorespaces#2.\quad}}#3%
}
\newcommand\@dotsep{4.5}
\def\@tocline#1#2#3#4#5#6#7{\relax
  \ifnum #1>\c@tocdepth 
  \else
    \par \addpenalty\@secpenalty\addvspace{#2}%
    \begingroup \hyphenpenalty\@M
    \@ifempty{#4}{%
      \@tempdima\csname r@tocindent\number#1\endcsname\relax
    }{%
      \@tempdima#4\relax
    }%
    \parindent\z@ \leftskip#3\relax \advance\leftskip\@tempdima\relax
    \rightskip\@pnumwidth plus1em \parfillskip-\@pnumwidth
    #5\leavevmode\hskip-\@tempdima{#6}\nobreak
    \leaders\hbox{}\hfill
    \nobreak
    \hbox to\@pnumwidth{\@tocpagenum{#7}}\par
    \nobreak
    \endgroup
  \fi}
\renewcommand\csname r@tocindent0\endcsname{0pt}
\def\l@subsection{\@tocline{2}{0pt}{2.5pc}{5pc}{}}
\definecolor{linkblue}{HTML}{00356B}
\definecolor{linkgold}{HTML}{DA9100}
\definecolor{linkred}{RGB}{159,  29, 53}
\theoremstyle{plain}
\newtheorem{theorem}{Theorem}[section]
\crefname{theorem}{Theorem}{Theorems}
\newtheorem{conjecture}[theorem]{Conjecture}
\crefname{conjecture}{Conjecture}{Conjectures}
\newtheorem{proposition}[theorem]{Proposition} 
\crefname{proposition}{Proposition}{Propositions}
\newtheorem{corollary}[theorem]{Corollary} 
\crefname{corollary}{Corollary}{Corollaries}
\newtheorem{lemma}[theorem]{Lemma} 
\crefname{lemma}{Lemma}{Lemmas}
\theoremstyle{definition}
\newtheorem{example}[theorem]{Example}
\crefname{example}{Example}{Examples}
\newtheorem{definition}[theorem]{Definition}
\crefname{definition}{Definition}{Definitions}
\theoremstyle{remark}
\newtheorem*{remark}{Remark}
\crefname{appendix}{Appendix}{Appendices}
\crefname{section}{Section}{Sections}
\crefname{figure}{Figure}{Figures}
\crefname{equation}{Equation}{Equations}
\newcommand{\field}[1]{\mathbf{#1}}
\renewcommand{\H}{\field{H}}
\newcommand{\C}{\field{C}}
\newcommand{\poly}[1]{{#1}}
\newcommand{\hull}{H}
\newcommand{\ptops}[2]{%
  \texorpdfstring{\unexpanded{\unexpanded{#1}}}{#2}%
}
\def\titletext{Optimal Monohedral Tilings of Hyperbolic~Surfaces}
\title[Opt. Monohedral Tilings of Hyp. Surfaces]{\MakeUppercase{\titletext}}
\author[Di Giosia]{Leonardo {Di Giosia}}
\author[Habib]{Jahangir Habib}
\author[Hirsch]{Jack Hirsch}
\author[Kenigsberg]{Lea Kenigsberg}
\author[Li]{Kevin Li}
\author[Pittman]{Dylanger Pittman}
\author[Petty]{Jackson Petty}
\author[Xue]{Christopher Xue}
\author[Zhu]{Weitao Zhu}
\let\@wraptoccontribs\wraptoccontribs
\address{Rice University, Department of Mathematics-MS 136, Box 1892, Houston, TX 77251-1892, USA}
\email[Leonardo {DiGiosia}]{lsd2@rice.edu}
\address{Department of Mathematics and Statistics, Williams College, Bascom House \\ 33 Stetson Court \\ Williamstown \\ MA 01267 \\ USA}
\email[Jahangir Habib]{jih1@williams.edu}
\address{Department of Mathematics \\ Yale University \\ New Haven, CT 06510, USA}
\email[Jack Hirsch]{jack.hirsch@yale.edu}
\email[Kevin Li]{k.li@yale.edu}
\email[Jackson Petty]{jackson.petty@yale.edu}
\email[Christopher Xue]{christopher.xue@yale.edu}
\address{Department of Mathematics, Columbia University, Room 509, MC 4406, 2990 Broadway, New York, NY 10027, USA}
\email[Lea Kenigsberg]{lea@math.columbia.edu}
\email[Weitao Zhu]{wz2453@math.columbia.edu}
\address{Department of Mathematics, Mail Stop: 1131-002-1AC, Emory University, Atlanta, GA 30322, USA}
\email[Dylanger Pittman]{dylanger.skyler.pittman@emory.edu}
\date{\today}
\begin{document}

\begin{abstract}
    The hexagon is the least-perimeter tile in the Euclidean plane for any
    given area. On hyperbolic surfaces, this ``isoperimetric'' problem differs 
    for every given area, as solutions do not scale. Cox conjectured that a 
    regular $k$-gonal tile with 120-degree angles is isoperimetric. For area 
    $\pi/3$, the regular heptagon has 120-degree angles and therefore tiles many hyperbolic surfaces. For other areas, we show the existence of many tiles but provide no conjectured optima.
    
    On closed hyperbolic surfaces, we verify via a reduction argument using cutting and pasting transformations and convex hulls that the regular 
    $7$-gon is the optimal $n$-gonal tile of area $\pi/3$ for $3\leq n \leq 10$.
    However, for $n>10$, it is difficult to rule out non-convex $n$-gons that tile irregularly.
   

\end{abstract}

\maketitle
\tableofcontents


\section{Introduction}
In \citeyear{hales} \textcite{hales} proved that the regular hexagon is the 
least-perimeter, unit-area tile of the plane, and further that
no such tiling of a flat torus can do better. Efforts to generalize this result to hyperbolic surfaces have to date been unsuccessful (see 
\cref{sect:monohedral}). We focus on monohedral tilings (by a single prototile)
and address the conjecture that a regular $k$-gon with $120^\circ$ angles is
optimal. Unfortunately, regular polygonal tiles of the hyperbolic plane $\H^2$ cover only a countable set of areas. 
We prove that \emph{equilateral} $2n$-gonal tiles ($n \geq 2$) cover large 
intervals of areas; for example, there are equilateral $12$-gonal tiles for all 
of the possible areas from $0$ to $10\pi$, except possibly the interval 
$(4\pi, 5\pi]$ (see \cref{sect:hyperbolic-plane}).

Our ideal regular polygons tile many closed hyperbolic surfaces, where we address the following conjecture:
\begin{conjecture}\label{con:main-result} 
Any non-equivalent tile of area $\pi/3$ of a closed hyperbolic surface has more perimeter than the regular heptagon $\poly{R}_7$. 

\end{conjecture}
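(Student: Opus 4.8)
\emph{Strategy.}
Let $T$ be an $n$-gonal tile of area $\pi/3$ of a closed hyperbolic surface $\Sigma$, and assume $T\not\cong\poly{R}_7$; the goal is $\per(T)>\per(\poly{R}_7)$. Write $L_k(A)$ for the perimeter of the regular hyperbolic $k$-gon of area $A$; the classical hyperbolic polygonal isoperimetric inequality gives that every $k$-gon of area $A$ has perimeter at least $L_k(A)$, with equality only for the regular one, and $L_k(A)$ is strictly increasing in $A$ and strictly decreasing in $k$, so the target is $L_7(\pi/3)=\per(\poly{R}_7)$. The plan is to lift $T$ isometrically to $\H^2$ (legitimate since a tile is a disk), replace it by its convex hull $\hull=\operatorname{conv}(T)$, which has at most $n$ sides and satisfies $\per(\hull)\le\per(T)$ and $\vol(\hull)\ge\pi/3$, and then squeeze; the tiling hypothesis enters only to limit the number of sides of $\hull$. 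I also record the global count: Gauss--Bonnet gives $\vol(\Sigma)=-2\pi\chi(\Sigma)$, hence $\Sigma$ carries exactly $N=-6\chi(\Sigma)$ copies of $T$ and $\chi(\Sigma)=-N/6$, while each tile contributes interior angle sum $(n-2)\pi-\pi/3$.

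\emph{The clean estimate.}
If $\hull$ has $m\le 7$ sides then
\[
\per(T)\ \ge\ \per(\hull)\ \ge\ L_m(\vol(\hull))\ \ge\ L_m(\pi/3)\ \ge\ L_7(\pi/3)\ =\ \per(\poly{R}_7),
\]
by the convex-hull inequality, the polygonal isoperimetric inequality, monotonicity of $L_m$ in area, and monotonicity of $L_\bullet(\pi/3)$ in the number of sides. Tracking equality forces $m=7$, $\vol(\hull)=\pi/3$, and $\hull$ regular; since $\vol(\hull)=\vol(T)$ then forces $T=\hull$, we would get $T\cong\poly{R}_7$, a contradiction, and the inequality is strict once $m<7$. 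In particular every $n$-gonal tile with $n\le 7$ is already handled, with no reference to the tiling.

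\emph{Using the tiling for $n\ge 8$.}
When $n\ge 8$ the hull estimate is too weak, since $L_n(\pi/3)<\per(\poly{R}_7)$, so the tiling itself must be invoked. The key point is that a \emph{convex} $n$-gon of area $\pi/3$ with $n\ge 8$ cannot tile $\Sigma$ at all: double-counting the edges and the angle sums of the tiling complex --- where allowing a tile's corner to land in the interior of a neighbour's edge introduces the only deviation from the edge-to-edge case --- and comparing with the handshake bound $2E\ge 3V$ forces $n\le 7$ whenever every vertex has degree at least $3$; hence some vertex has degree $\le 2$, which is impossible for a convex tile since two convex corner angles sum to less than $2\pi$ and a straight angle is not a corner. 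Therefore every $n$-gonal tile with $n\ge 8$ is non-convex, so its convex hull has at most $n-1$ sides, and for $n=8$ the clean estimate applies. For $n=9,10$ only the cases $(n,m)\in\{(9,8),(10,8),(10,9)\}$ remain, and I would close them by a finite analysis: retain the excess $\per(T)-\per(\hull)=\sum_{\text{pockets}}\bigl(\text{arc along }\partial T-\text{chord}\bigr)>0$, which the hyperbolic triangle inequality ties to the pocket areas $\vol(\hull)-\pi/3$, and combine it with the arithmetic of the admissible vertex figures of $T$ (multisets of its angles, at most one straight angle, summing to $2\pi$, at least one figure containing one of the $n-m$ reflex angles) to force either $\vol(\hull)$ large enough that $L_m(\vol(\hull))\ge\per(\poly{R}_7)$, or enough accumulated excess perimeter to clear $\per(\poly{R}_7)$.

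\emph{The main obstacle.}
The genuine difficulty --- and the reason this is posed as a conjecture --- is $n>10$. Then $\hull$ may have any number of sides from $8$ to $n-1$, with up to $n-8$ reflex vertices in many combinatorial arrangements; the pocket refinement decays (many shallow pockets lose little area while adding little perimeter), and the vertex-figure bookkeeping is no longer finite in any useful sense. In particular this method cannot exclude a non-convex $n$-gon tiling $\H^2$ --- hence some $\Sigma$ --- irregularly, even aperiodically, while hugging the isoperimetric profile closely enough to beat $\per(\poly{R}_7)$. Pushing past $n=10$, or reaching non-polygonal tiles, appears to require a genuinely global, tiling-aware isoperimetric estimate in place of the tile-by-tile convex-hull reduction.
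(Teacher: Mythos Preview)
This statement is a \emph{conjecture} in the paper; the paper itself proves it only for $n$-gonal tiles with $n\le 10$ (its Theorem~8.2), and you correctly identify $n>10$ as the genuine open obstacle. For $n\le 8$ your argument coincides with the paper's: the polygonal isoperimetric inequality and convex-hull monotonicity handle $n\le 7$, and for $n=8$ the Euler/degree count forces an angle of measure at least $\pi$, dropping the hull to at most seven sides.

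The substantive gap is $n=9,10$, where your pocket/excess-perimeter sketch is not a proof and does not obviously close. There is no inequality tying $\per(T)-\per(\hull)$ to $\vol(\hull)-\pi/3$ in the direction you need: as the reflex angle approaches $\pi$ both quantities tend to zero while $\per(\hull)$ is only bounded below by $L_8(\pi/3)<L_7(\pi/3)$, so your proposed trade-off degenerates. The idea you are missing is the paper's \emph{flattening} of complementary vertices. The degree count yields more than the mere existence of one degree-$2$ vertex: on average each tile has at least $n-7$ of them, and a degree-$2$ vertex $B$ with $m(B)>\pi$ is matched in the monohedral tiling by a convex vertex $B'$ on the same prototile with $m(B')=2\pi-m(B)$ and \emph{congruent incident edges} (the paper's Lemma~7.2). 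Replacing the two-edge chains at $B$ and at $B'$ by their chords therefore removes and adds congruent triangles, so area is preserved \emph{exactly} while perimeter strictly decreases and two sides disappear. For $n=9$ this produces a heptagon in one step; for $n=10$ there are two reflex/complement pairs and the paper runs a six-case analysis of their possible adjacencies (Proposition~8.1). Your ``arithmetic of admissible vertex figures'' gestures toward this structure but never isolates the congruent-edge pairing that makes the reduction area-preserving rather than merely area-increasing, which is exactly what lets one land on a genuine heptagon instead of an $8$- or $9$-sided hull.
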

Our \cref{prop:heptagon-best} proves that the regular $7$-gon with $120^\circ$ angles is optimal in comparison with all $n$-gons of area $\pi/3$ for $n \leq 10$. 
A subsequent paper [X] simultaneously proves \cref{con:main-result} in comparison with polygons of any number $n$ of sides and generalizes the result from $7$ to all $k \geq7$.

\subsection*{Methods}
To obtain equilateral $2n$-gonal tiles of $\textbf{H}^2$, it suffices by \textcite{MM} (\cref{prop:margulis}) to construct equilateral $2n$-gons with angles summing in various combinations to $2\pi$. \cref{prop:evenequilateral} actually shows there is an equilateral $2n$-gon with any repeated sequence of angles (so that opposite angles are equal) as long as the exterior angles sum to less than $2\pi$. The careful induction argument considers the effects as the constant sidelength $\ell$ approaches $0$ and $\infty$.


To prove $\poly{R}_7$ is the optimal tile of an appropriate closed hyperbolic surface, \cref{prop:reg-is-best} first verifies that among $n$-gons of given area, the regular one minimizes perimeter.
It follows easily that $\poly{R}_7$ has less perimeter than all other $n$-gonal tiles for $n \leq 7$.
For $n>7$, we show that in an $n$-gonal tiling there are on average at least $n-7$ vertices of 
degree $2$ per tile.
In particular for $n \geq 8$, an $n$-gonal tile has a concave angle. This means that the convex hull of an octogonal tile (see \cref{sec:octagonal-tiles}) has at most $7$ sides with generally more area and perimeter than $\poly{R}_7$.
Similarly, if a $9$-gonal tile (see \cref{sec:nonagonal-tiles}) has two or more concave angles, it has more perimeter than $Q_7$. If it instead has one concave angle, a flattening argument that fills in the concave angle and truncates the corresponding convex angle which fits into it  preserves area, reduces perimeter, and yields a heptagon which generally
has more perimeter than $\poly{R}_7$. 
Finally, for a $10$-gonal tiling (see \cref{sec:decagonal-tiles}), there may be many concave angles filled by many different convex angles, perhaps nested inside one another, complicating the flattening procedure. 
\cref{prop:heptagon-beats-decagon} reduces the analysis to six substantive cases, taking care to show that each may be flattened without resulting in self-intersecting shapes.

\textcite{hales} remarks that \citeauthor{toth43},  who proved the honeycomb conjecture for \emph{convex} cells  \cite{toth43}, predicted that general cells would involve considerable difficulties \cite[183]{tothFigures} and said that the conjecture had resisted all attempts at proving it \cite{tothBees}. Removing the convexity hypothesis is the major advance of Hales's work and of ours, although we consider just polygonal cells.



\begin{figure}[b]
    \centering
    \includegraphics[scale=0.5]{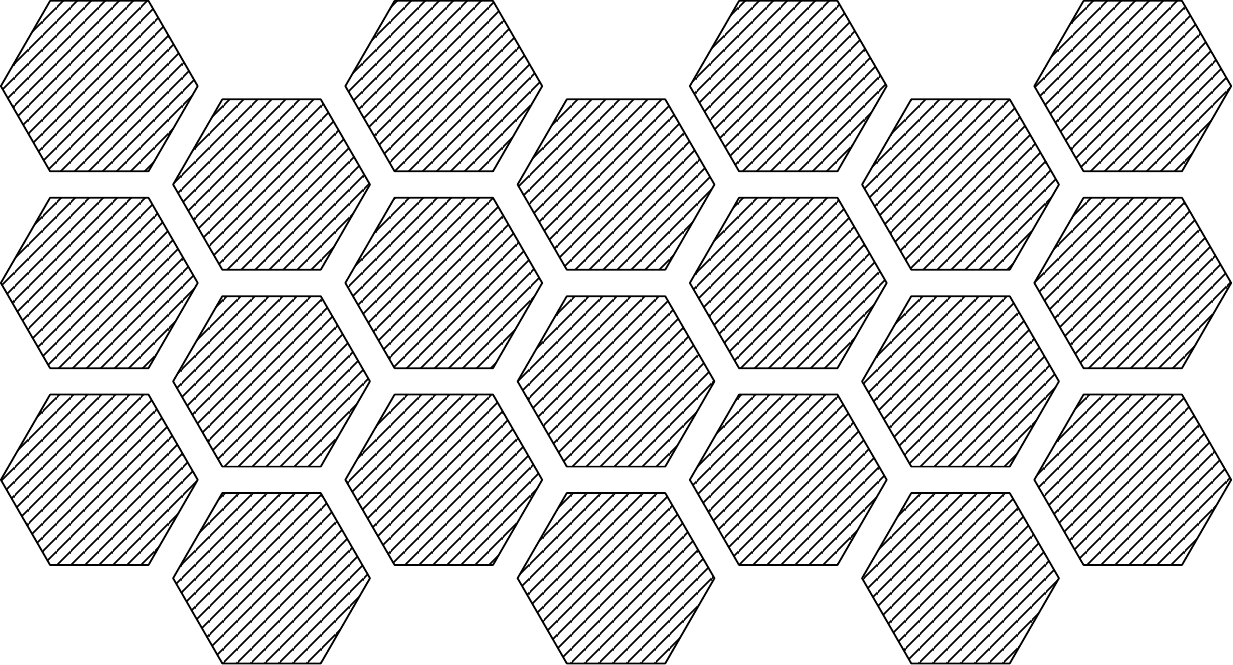}
    \caption{\citeauthor{hales} (\citeyear{hales}) proved that regular hexagons provide the least-perimeter equal-area tiling of the plane.}
    \label{fig:hales}
\end{figure}

\subsection*{Acknowledgements} This work is a product of the 2019 Summer Undergraduate Mathematics Research at Yale (SUMRY) and the 2016 Williams College
NSF SMALL Geometry Groups, under the guidance of Frank Morgan of Williams College. The authors greatly thank Prof.\ Morgan for his help and insight
over the many weeks we spent researching and writing this paper. We thank the National Science Foundation (NSF), the
Williams College Science Center, the John and Louise
Finnerty Fund, and Yale for support. We also thank the Mathematical Association of America, Stony Brook University, Williams College, the Young Mathematicians Conference, and Yale for supporting our trips to speak at MathFest 2016 and the 2019 Young Mathematicians Conference, both in Columbus, Ohio.

\section{Definitions}

\begin{definition}[Tiling]
Let $M$ be a closed Riemannian surface. A tiling of $M$ is an
embedded multigraph on $M$ with no vertices of degree 0 or 1. A tiling is \emph{polygonal}
if 
\begin{enumerate}
\item every edge is a geodesic;
\item every face is an open topological disk.
\end{enumerate}
  The oriented boundary of a face of a polygonal tiling is called a polygon.  A tiling is \emph{monohedral} if all faces are congruent. 
\end{definition}

\begin{remark}
All polygonal tilings are \emph{connected} multigraphs. When tiling a closed surface with a tile $\poly{Q}$, one copy $\poly{Q}^*$ might be edge-to-edge with itself. An example is tiling a hyperbolic two-holed torus with a single hyperbolic octagon, which has all eight of its vertices coinciding at one point, and each edge coinciding with another edge. A second example is tiling a one-holed torus by tiling the square fundamental region with thin vertical rectangles. The rectangle is edge-to-edge with itself at top and bottom, and the two vertices of a vertical edge coincide. This is consistent because a tiling is defined as a \emph{multi}graph. 
\end{remark}

It is often useful to consider $m$-gons which ``look like'' $n$-gons because
of angles of measure $\pi$, such as a rectangle which appears to be a
triangle because it has three angles of measure $2\pi/3$ and one angle of 
measure $\pi$. To clarify this situation, we introduce a notion of equivalence
to polygons.

\begin{definition}[Equivalent]
    Two polygons $\poly{Q}$ and $\poly{Q}'$ are \emph{equivalent} $\poly{Q}\sim\poly{Q}'$ \ if they are equal after the removal of all vertices of measure $\pi$.
\end{definition}

\begin{remark}
    We can't in general define away vertices of measure $\pi$; a vertex in a tiling could, for example, have angles $\pi, \pi/2, \pi/2$, so the vertex has to be there because of the $\pi/2$ angles.
\end{remark}

\begin{definition}[Convex Hull]
Let $R$ be a polygonal region in a closed hyperbolic surface $M$.
The convex hull $\hull (R)$ is taken in the hyperbolic plane
(with the minimal number of vertices). 
The convex hull of an $n$-gonal region $R$ is a $k$-gonal region for some~$k \leq n$. 
The convex hull has no less area and no more perimeter. 
\end{definition}

\begin{remark}[Existence] 
By standard compactness arguments, there is a perimeter-minimizing tiling for prescribed areas summing to the area of the surface, except that polygons may bump up against 
themselves and each other, possibly with angles of measure 0 and $2\pi$, in the limit.
We think that no such bumping occurs, but we have no proof. 
\end{remark}



\section{Hyperbolic Geometry}\label{sect:hyperbolic-geometry}

We begin with some basic results of hyperbolic geometry.
Of particular interest are formulae concerning the area and
perimeter of polygons in hyperbolic space. \cref{cor:combination-3-7}
proves that the regular heptagon is optimal {(\cref{con:main-result})} among polygons with seven or fewer sides.

\begin{proposition}
\label{prop:Gauss-Bonnet}
By the Gauss-Bonnet Theorem, an $n$-gon in the hyperbolic plane with 
interior angles $\theta_1, \dots, \theta_n$ has area
$(n-2)\pi-\sum  \theta_i$. In particular, a regular $n$-gon with interior angle $\theta$ has area 
\begin{equation} \label{eq:A-n-theta}
		A(n,\theta) = (n-2)\pi-n\theta. 
\end{equation}
\end{proposition}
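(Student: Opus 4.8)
The plan is to obtain both statements directly from the Gauss--Bonnet theorem applied to the polygonal region itself, with the triangle case available as a fallback. Let $P$ be the embedded $n$-gon with interior angles $\theta_1,\dots,\theta_n$, and let $R\subset\H^2$ be the region it bounds. By our definition of a polygonal tiling, $R$ is an open topological disk, so $\chi(R)=1$. Since $\H^2$ has constant curvature $K\equiv-1$, and every edge of $P$ is a geodesic (so the geodesic curvature $\kappa_g$ vanishes along $\partial R$, away from the vertices), the exterior-angle form of Gauss--Bonnet,
\begin{equation*}
  \int_R K\,dA \;+\; \int_{\partial R}\kappa_g\,ds \;+\; \sum_{i=1}^{n}(\pi-\theta_i) \;=\; 2\pi\,\chi(R),
\end{equation*}
collapses to $-\operatorname{Area}(R)+\sum_{i=1}^{n}(\pi-\theta_i)=2\pi$. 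Solving for the area gives $\operatorname{Area}(R)=n\pi-\sum_{i=1}^{n}\theta_i-2\pi=(n-2)\pi-\sum_{i=1}^{n}\theta_i$, which is the first assertion. Specializing to $\theta_i=\theta$ for all $i$ yields $A(n,\theta)=(n-2)\pi-n\theta$, establishing \eqref{eq:A-n-theta}.

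As an alternative, more elementary route, one first records the triangle case: a geodesic triangle in $\H^2$ with angles $\alpha,\beta,\gamma$ has area $\pi-\alpha-\beta-\gamma$ (the angle defect), which is itself the special case $n=3$ of the formula above. Then one triangulates $P$ by fanning from a single vertex into $n-2$ geodesic triangles; the triangle angles, summed over all triangles, account for precisely the $n$ interior angles of $P$, so adding the $n-2$ triangle areas gives $(n-2)\pi-\sum\theta_i$ at once. Either argument also covers the ``equivalent polygon'' situation where some $\theta_i=\pi$, since such a vertex contributes $\pi-\theta_i=0$ and is harmless.

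There is no substantive obstacle here; the only point meriting a word of care is the hypothesis that $R$ is a disk, i.e.\ that $P$ is embedded and (in the triangulation approach) that the fan diagonals remain inside $R$. This is exactly what is built into the definition of a polygonal tiling, where faces are open topological disks, so no extra work is needed. The proposition is thus a bookkeeping consequence of Gauss--Bonnet, recorded for repeated later use in computing areas of regular $n$-gonal tiles throughout \cref{sect:hyperbolic-geometry}.
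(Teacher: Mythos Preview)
Your proposal is correct and matches the paper's approach: the paper states the result as an immediate consequence of Gauss--Bonnet without writing out a proof, and your primary argument simply fills in that standard computation. One small caveat on your alternative triangulation route: fanning from a single vertex need not keep the diagonals inside $R$ when the polygon is non-convex (being a disk does not guarantee this), so that variant would require a general triangulation of a simple polygon rather than a fan; but since your main Gauss--Bonnet argument already settles the proposition, this is inessential.
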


\begin{proposition}[Law of Cosines]
\label{LoC}
If $l$ is the length of the side opposing angle $\theta_3$ in a triangle with interior angles $\theta_i$, then 
\[\cos{\theta_3} =\sin{\theta_1}\sin{\theta_2}\cosh{l}-\cos{\theta_1}\cos{\theta_2}.
\]
In particular, for right triangle $\triangle ABC$ with legs $a,b$, 
\[
\cosh{(a)}=\cos{(\angle A)}/\sin{(\angle B)}.
\]
\end{proposition}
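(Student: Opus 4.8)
The final statement is the dual (``angle--angle--side'') hyperbolic law of cosines together with its right-triangle specialization, and the plan is to prove it in the hyperboloid model of $\H^2$. First I would realize $\H^2$ as the future sheet of $\{x\in\R^{2,1}:\langle x,x\rangle=-1\}$ with $\langle x,y\rangle=x_1y_1+x_2y_2-x_0y_0$; in this model $\cosh\operatorname{dist}(X,Y)=-\langle X,Y\rangle$ for points $X,Y$, the tangent plane at a point $P$ is the (positive-definite) subspace $P^{\perp}$, and the interior angle at $P$ between two edges is the Euclidean angle in $P^{\perp}$ between the orthogonal projections of the opposite endpoints. Placing the triangle with linearly independent unit timelike vertices $V_1,V_2,V_3$, the side opposite $\theta_3$ has length $l$ with $\cosh l=-\langle V_1,V_2\rangle$, and likewise for the other two sides.

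For each $k$ I would introduce the unit spacelike vector $n_k$ orthogonal to the timelike $2$-plane spanned by the two endpoints of the side opposite $V_k$, oriented to point out of the triangle; it exists and is unique since the orthogonal complement of a timelike $2$-plane is a spacelike line. Two observations carry the proof. (i) Because $n_1\perp V_2,V_3$ and $n_2\perp V_1,V_3$ both lie in $V_3^{\perp}$ and are perpendicular there to the projections of $V_1,V_2$, working in the Euclidean plane $V_3^{\perp}$ (put the two edge directions at angles $0$ and $\theta_3$, the outward normals at $-\pi/2$ and $\theta_3+\pi/2$) gives $\langle n_i,n_j\rangle=-\cos\theta_k$ for $\{i,j,k\}=\{1,2,3\}$. (ii) With $V,N$ the matrices whose columns are the $V_i$ and $n_k$ and $J=\operatorname{diag}(1,1,-1)$, the relations $\langle n_k,V_i\rangle=0$ $(i\ne k)$ say that $N^{\mathsf T}JV$ is a diagonal matrix $D$, and since $J^{-1}=J$ this forces $G'=N^{\mathsf T}JN=D(V^{\mathsf T}JV)^{-1}D=DG^{-1}D$, hence $G=D(G')^{-1}D$, where $G,G'$ are the Gram matrices of the vertices and of the normals.

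From $G=D(G')^{-1}D$ together with $\langle V_i,V_i\rangle=-1$, the diagonal factors cancel in
\[
  \langle V_1,V_2\rangle^{2}=G_{12}^{2}
   =\frac{(\operatorname{adj}G')_{12}^{2}}{(\operatorname{adj}G')_{11}(\operatorname{adj}G')_{22}}.
\]
By (i) the matrix $G'$ has $1$'s on the diagonal and $-\cos\theta_1,-\cos\theta_2,-\cos\theta_3$ off it, so a one-line computation of $2\times2$ cofactors gives $(\operatorname{adj}G')_{11}=\sin^2\theta_1$, $(\operatorname{adj}G')_{22}=\sin^2\theta_2$, and $(\operatorname{adj}G')_{12}=\cos\theta_3+\cos\theta_1\cos\theta_2$. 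Since $\langle V_1,V_2\rangle=-\cosh l<0$ and $\sin\theta_1,\sin\theta_2>0$, taking the forced square root yields $\sin\theta_1\sin\theta_2\cosh l=\cos\theta_3+\cos\theta_1\cos\theta_2$, which is the claimed identity. The right-triangle corollary then drops out by applying this to $\triangle ABC$ with the right angle at $C$, taking $\theta_3=\angle A$ (so $l=a$) and $\{\theta_1,\theta_2\}=\{\angle B,\pi/2\}$: the factor $\sin(\pi/2)=1$ and the vanishing term $\cos(\pi/2)=0$ collapse the identity to $\cos(\angle A)=\sin(\angle B)\cosh a$.

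The matrix manipulation and the cofactor evaluations are routine; the part that needs care is the sign bookkeeping --- confirming that (i) gives $-\cos\theta_k$ rather than $+\cos\theta_k$, and that the square root above takes the $+$ sign (equivalently $\cos\theta_3+\cos\theta_1\cos\theta_2>0$ for a genuine hyperbolic triangle). I would settle both at once by checking on an explicit symmetric configuration, such as an equilateral triangle, or by continuity from the near-Euclidean limit. One can instead deduce the identity from the first (side-based) law of cosines and the law of sines by elimination, or heuristically by the substitution $c\mapsto ic$ in the spherical dual law of cosines, but the hyperboloid computation seems cleanest to make fully rigorous.
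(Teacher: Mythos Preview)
Your derivation via the hyperboloid model is correct and complete modulo the sign checks you flag; those are genuine but easily settled exactly as you suggest (the near-Euclidean limit forces the $-\cos\theta_k$ sign in (i), and $\cos\theta_3+\cos\theta_1\cos\theta_2>0$ follows since $\cosh l\ge1$ makes the left side of the final identity at least $\sin\theta_1\sin\theta_2>0$, hence the right side is positive too --- so no external check is even needed once the identity is established up to sign).

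That said, the paper gives no proof of this proposition at all: it is quoted as a standard fact of hyperbolic trigonometry and used as a black box (e.g.\ in the perimeter formula of \cref{prop:perimeter-n-theta}). So there is nothing to compare against --- your argument supplies a self-contained justification that the paper simply omits. The Gram-matrix/outward-normal approach you take is one of the cleaner routes to the dual law of cosines, and it would serve perfectly well if the paper wanted to include a proof rather than cite the result.
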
 

\begin{proposition}
\label{prop:perimeter-n-theta}
A regular $n$-gon with interior angle $\theta$ has perimeter
\begin{equation} \label{eq:P-n-theta}
P(n,\theta)=2n\cosh^{-1}\left(\frac{\cos(\pi/n)}{\sin(\theta/2)}\right).
\end{equation}
\end{proposition}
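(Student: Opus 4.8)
The plan is to reduce the computation to a single right triangle via the standard radial decomposition of a regular polygon. First I would join the center $O$ of the regular $n$-gon to each of its $n$ vertices, cutting the polygon into $n$ congruent isosceles triangles, each with apex angle $2\pi/n$ at $O$; by the rotational symmetry the radii bisect the interior angles, so each such triangle has its two base angles equal to $\theta/2$. Dropping the perpendicular from $O$ to the midpoint $M$ of one side $AB$ bisects such a triangle into two congruent right triangles, and I would focus on $\triangle OMA$: it has a right angle at $M$, angle $\pi/n$ at $O$, angle $\theta/2$ at $A$, and the leg $MA$ has length $\ell/2$, where $\ell$ is the common side length of the polygon.

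Next I would apply the right-triangle Law of Cosines from \cref{LoC}. Matching notation so that the leg opposite the vertex carrying angle $\pi/n$ is $MA$ and the other acute angle is $\theta/2$, the identity $\cosh(a)=\cos(\angle A)/\sin(\angle B)$ yields
\[
\cosh\!\left(\frac{\ell}{2}\right)=\frac{\cos(\pi/n)}{\sin(\theta/2)}.
\]
This is consistent: a regular $n$-gon with interior angle $\theta$ exists exactly when $0<\theta<(n-2)\pi/n$, equivalently $\pi/2-\pi/n>\theta/2$, which forces $\cos(\pi/n)>\sin(\theta/2)$, so the right-hand side exceeds $1$ and the inverse hyperbolic cosine is defined. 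Solving for $\ell$ and multiplying by the number of sides gives $P(n,\theta)=n\ell=2n\cosh^{-1}\!\left(\cos(\pi/n)/\sin(\theta/2)\right)$, which is \cref{eq:P-n-theta}.

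There is essentially no hard step here; the only thing to be careful about is the bookkeeping in the Law of Cosines — ensuring that the angle paired with the numerator cosine is the one at the center ($\pi/n$) and the angle paired with the denominator sine is the one at the polygon vertex ($\theta/2$), and that the leg being measured is half of a full edge rather than a whole edge or the apothem. As a sanity check one can examine the degenerate limit $\theta\to(n-2)\pi/n$, in which the polygon collapses and $P\to 0$, matching $\cosh^{-1}(1)=0$; this confirms the assignment of angles and legs.
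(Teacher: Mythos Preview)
Your proof is correct and follows essentially the same approach as the paper: decompose the polygon into $2n$ right triangles with angles $\pi/2$, $\pi/n$, $\theta/2$ by joining the center to the vertices and to the edge midpoints, then apply the right-triangle case of \cref{LoC} to compute the half-edge length. Your added consistency and sanity checks are extra detail, but the argument itself matches the paper's.
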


\begin{proof}
Connect the center of $\poly{Q}_n$ to each of its vertices to form $n$ isosceles triangles. 
Bisect the $n$ congruent triangles into $2n$ right triangles by connecting the center of the polygon to the bisector of each side of the polygon. 
Each triangle has interior angles $\pi/2, \pi/n$, and $\theta/2$. By \cref{LoC}, the length of the leg on the polygonal side of each of the $2n$ right triangles is $\cosh^{-1}(\cos(\pi/n)/\sin(\theta/2))$.
\end{proof}

\begin{definition} \label{def:AkPk}
For $k \geq 7$, let $A_k=A(k,2\pi/3)=(k-6)\pi/3$ and $P_k = P(k,2\pi/3)$ denote the area and perimeter of the regular $k$-gon $\poly{R}_k$ with angles $2\pi/3$.
\end{definition}

Regular $n$-gons are isoperimetric among $n$-gons.

\begin{proposition}[\cite{hirsch}, Prop.\ 3.7]\label{prop:reg-is-best}
    In the hyperbolic plane, the regular $n$-gon $\poly{Q}_n$ has less perimeter
    than any other $n$-gon $\poly{Q}$ of the same area.
\end{proposition}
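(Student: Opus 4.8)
The plan is to deduce the statement from its dual form: \emph{among all $n$-gons of a fixed perimeter $L$, the regular $n$-gon encloses the largest area, and strictly more than any non-equivalent competitor.} The two forms are equivalent thanks to the explicit formulas for regular polygons. By \cref{prop:Gauss-Bonnet,prop:perimeter-n-theta}, along the one-parameter family of regular $n$-gons the area $A(n,\theta)=(n-2)\pi-n\theta$ and the perimeter $P(n,\theta)=2n\cosh^{-1}\!\big(\cos(\pi/n)/\sin(\theta/2)\big)$ are each strictly decreasing in the interior angle $\theta$, so area is a strictly increasing function of perimeter along this family. Hence if an $n$-gon $\poly{Q}$ had the same area as $\poly{Q}_n$ but perimeter $L'<L\coloneqq P(\poly{Q}_n)$, the regular $n$-gon of perimeter $L'$ would enclose strictly less area than $\poly{Q}_n$ while $\poly{Q}$, of perimeter $L'$, encloses exactly that much --- contradicting the dual statement at perimeter $L'$; the case $L'=L$ with $\poly{Q}\not\sim\poly{Q}_n$ contradicts the strict clause directly.

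For the dual statement I would argue in three steps. \emph{Existence.} After normalizing by an isometry so that one vertex and one incident edge direction are fixed, the perimeter bound confines the vertices of an area-maximizing sequence to a compact subset of $\H^2$ and \cref{prop:Gauss-Bonnet} bounds the areas; passing to a convergent subsequence and handling degenerate limits (straight angles, or boundary self-contact) with routine care yields a genuine $n$-gon maximizer $\poly{M}$. \emph{Convexity and equilaterality.} Sliding a vertex $P_i$ along the locus $\{\,x:d(P_{i-1},x)+d(x,P_{i+1})=\mathrm{const}\,\}$ fixes the perimeter, and the spanned triangle $P_{i-1}P_iP_{i+1}$ has area unimodal along this locus with peak at the isosceles point; so no vertex of $\poly{M}$ can be reflex (a reflex vertex could be slid toward degeneracy, shrinking the triangle it subtracts and gaining area), and then, $\poly{M}$ being convex, each vertex must sit at its isosceles position (else slide toward it), making $\poly{M}$ a convex equilateral $n$-gon with side $\ell=L/n$. \emph{Regularity.} Such an $\poly{M}$ must be regular: for $n=3$ this is immediate, and for $n\ge 4$, flexing the three-segment linkage on four consecutive vertices $P_{i-1}P_iP_{i+1}P_{i+2}$ with the outer pair $P_{i-1},P_{i+2}$ held fixed preserves every side length (hence the perimeter) while, by the $n=4$ case of the cyclic-polygon lemma below, strictly increasing the area of that quadrilateral --- hence of $\poly{M}$ --- unless it is concyclic; so every consecutive quadruple is concyclic, all vertices lie on a single circle, and an equilateral concyclic polygon is regular.

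The crux is the \emph{cyclic-polygon lemma}: among polygons of $\H^2$ with prescribed consecutive side lengths, the one whose vertices lie on a common circle (or, for long sides, horocycle or hypercycle) has the greatest area. This is the hyperbolic analogue of a classical Euclidean fact, but without rescaling one must separately establish the existence of the extremal configuration and identify which of the three circumscribing curves occurs. I would reduce first to convex polygons and then, given a competitor $\poly{Q}$ and the concyclic polygon $\poly{P}$ with the same sides inscribed in a curve $C$, attach to the outside of each edge of $\poly{Q}$ the segment cut from $C$ by the corresponding chord: the resulting region has boundary length equal to that of $C$ but is a disk only when $\poly{Q}=\poly{P}$, so the smooth isoperimetric inequality in $\H^2$ forces $\operatorname{area}(\poly{Q})\le\operatorname{area}(\poly{P})$, strictly unless $\poly{Q}=\poly{P}$; alternatively, in the $n=4$ case one triangulates along a diagonal $p$ and computes $\partial(\mathrm{area})/\partial p$, which vanishes precisely at the concyclic configuration. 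The one point needing genuine care throughout is that the slides, reflections and flexes introduce no self-intersections; since the area is being driven strictly upward toward the simple convex regular configuration this can be arranged, and the complete argument is carried out in \cite{hirsch}.
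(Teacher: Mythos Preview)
The paper gives no proof of this proposition; it is imported wholesale as \cite{hirsch}, Prop.~3.7, and the text moves directly to \cref{cor:combination-3-7}. So there is no argument in the paper to compare yours against.

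Your sketch follows a classical route --- dualize to the fixed-perimeter problem, then existence, convexity, equilaterality, and finally regularity via a cyclic-quadrilateral lemma --- and, like the paper, ultimately defers to \cite{hirsch} for the rigorous version. A few of the steps you pass over as routine carry real content in the hyperbolic setting. The unimodality of the triangle area as the apex slides along a hyperbolic ellipse is not as immediate as in the Euclidean case. The caps-plus-isoperimetry argument you give for the cyclic-polygon lemma works only when the circumscribing curve $C$ is a genuine circle: if $C$ is a horocycle or hypercycle it is not closed, the reassembled arcs do not bound a region to which the isoperimetric inequality applies, and the comparison breaks down as stated. Fortunately your application needs only the $n=4$ case with three equal sides drawn from an equilateral polygon, where the concyclic configuration is inscribed in an actual circle, so the issue does not bite; but the general lemma requires a different argument. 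Finally, as you yourself note, ruling out self-intersection during the slides and flexes is where most of the work lies. None of this is a fatal gap in a sketch that explicitly cites \cite{hirsch} for the full details, but these are exactly the points that make the hyperbolic result a theorem rather than an exercise.
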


\begin{corollary}
   \label{cor:combination-3-7}
   Tile a closed hyperbolic surface by polygons of area $\pi/3$ with 7 or fewer sides. Then each of those tiles has perimeter greater than or equal to that of the regular heptagon of area $\pi/3$.
\end{corollary}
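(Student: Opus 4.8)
The plan is to reduce the statement to \cref{prop:reg-is-best} together with a short finite computation. Let $\poly{Q}$ be a tile with $\vol(\poly{Q}) = \pi/3$ and at most $7$ sides. Deleting a vertex of interior angle $\pi$ changes neither area nor perimeter, so I may replace $\poly{Q}$ by an equivalent polygon with no angle equal to $\pi$; by \cref{prop:Gauss-Bonnet} a polygon of positive area has at least $3$ such vertices (a $1$- or $2$-gon would have area $\leq 0$), so after this reduction $\poly{Q}$ is an $m$-gon with $3 \leq m \leq 7$. Since every edge of a polygonal tiling is a geodesic, $\poly{Q}$ may be regarded as a geodesic $m$-gon in $\H^2$ of the same area and perimeter, so \cref{prop:reg-is-best} gives $\per(\poly{Q}) \geq P(m, \theta_m)$, where $\poly{Q}_m$ denotes the regular $m$-gon of area $\pi/3$ and $\theta_m$ its interior angle. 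It then remains to show $P(m,\theta_m) \geq P_7$ for $m \in \{3,4,5,6,7\}$, with equality only at $m = 7$.

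First I would make $P(m,\theta_m)$ explicit. By \cref{prop:Gauss-Bonnet}, the equation $A(m,\theta_m) = (m-2)\pi - m\theta_m = \pi/3$ forces $\theta_m = \pi - \frac{7\pi}{3m}$; in particular $\theta_7 = 2\pi/3$, so the regular heptagon of area $\pi/3$ is exactly $\poly{R}_7$ and $P(7,\theta_7) = P_7$. Writing $\theta_m/2 = \frac{\pi}{2} - \frac{7\pi}{6m}$, so that $\sin(\theta_m/2) = \cos\!\left(\frac{7\pi}{6m}\right)$, \cref{prop:perimeter-n-theta} becomes
\[
    P(m,\theta_m) = 2m\,\cosh^{-1}\!\left( \frac{\cos(\pi/m)}{\cos\!\left(7\pi/(6m)\right)} \right).
\]

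Finally I would verify the inequality $P(m,\theta_m) \geq P_7$. Because only the five integers $m = 3,4,5,6,7$ arise, this is an elementary finite check: the closed form above yields perimeters approximately $5.57$, $4.49$, $4.18$, $4.04$, and $3.96 = P_7$, so $P(m,\theta_m)$ is strictly decreasing on $\{3,\dots,7\}$ and is minimized, among polygons of area $\pi/3$ with at most $7$ sides, exactly by $\poly{R}_7$. (If a computation-free argument were wanted, one could instead differentiate $t \mapsto 2t\,\cosh^{-1}\!\left(\cos(\pi/t)/\cos(7\pi/(6t))\right)$ and check it is decreasing for real $t \geq 3$, but this is not needed.) The only step carrying any content is this last comparison, and even it is mild --- it reduces to four explicit inequalities between values of $\cosh^{-1}$ --- so I do not expect a genuine obstacle. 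Combining the three steps gives $\per(\poly{Q}) \geq P_7$.
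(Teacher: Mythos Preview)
Your argument is correct, but it takes a longer route than the paper's. The paper observes that any polygon with at most $7$ sides can be regarded as a $7$-gon by inserting vertices of measure $\pi$; then a single application of \cref{prop:reg-is-best} (with $n=7$) immediately gives $\per(\poly{Q}) \geq P(\poly{R}_7)$, with no computation at all. In effect, the paper exploits that \cref{prop:reg-is-best} already compares the regular $7$-gon against \emph{all} $7$-gons, including degenerate ones equivalent to $m$-gons for $m<7$. Your approach instead applies \cref{prop:reg-is-best} separately for each $m\in\{3,\dots,7\}$ and then compares the five regular $m$-gons by an explicit numerical check. This is valid and the numerics are right, but the finite check is redundant: the monotonicity $P(3,\theta_3)>\cdots>P(7,\theta_7)$ you verify by hand is exactly what the paper later isolates as \cref{prop:reg-decreasing}, proved there (without numbers) by the same ``add a $\pi$-vertex'' trick. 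So both approaches are correct; the paper's is shorter and computation-free, while yours makes the monotonicity of $P(m,\theta_m)$ quantitatively explicit.
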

\begin{proof}
    This corollary follows immediately from \cref{prop:reg-is-best}.
\end{proof}

Intuitively, the perimeter of regular $n$-gons of a fixed area should decrease
as~$n$ increases to approach the limiting bound of a circle, the most efficient
way to enclose a given area. Instead of performing computations of the perimeter
to prove this, we appeal to the fact that regular $n$-gons are more efficient 
than any other $n$-gons to show that this is, in fact, the case.

\begin{proposition} \label{prop:reg-decreasing}
The perimeter of a regular $n$-gon for a fixed area is decreasing as a 
function of $n$.
\end{proposition}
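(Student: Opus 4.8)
The plan is to avoid evaluating the explicit formula \eqref{eq:P-n-theta} for $P(n,\theta)$ altogether, and instead exploit \cref{prop:reg-is-best}: among all $m$-gons of a fixed area, the regular one has strictly least perimeter. The key observation is that a regular $n$-gon $\poly{Q}_n$, after one inserts an extra vertex of measure $\pi$ at the midpoint of one of its sides, becomes an $(n+1)$-gon with exactly the same region — hence the same area and the same perimeter as $\poly{Q}_n$ — but it is not the regular $(n+1)$-gon of that area, so it must lose to it. Comparing the two yields $P(n+1,\theta_{n+1}) < P(n,\theta_n)$ for the common area, and since $n$ is arbitrary the claim follows by induction.

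In detail, I would first fix $n \geq 3$, let $\theta_n$ be the interior angle of $\poly{Q}_n$, and set $A \coloneqq A(n,\theta_n)$. By \cref{prop:Gauss-Bonnet}, $A = (n-2)\pi - n\theta_n$, so $0 < A < (n-2)\pi < (n-1)\pi$; since the regular $(n+1)$-gon realizes every area in $(0,(n-1)\pi)$ as its interior angle ranges over $\bigl(0,\tfrac{(n-1)\pi}{n+1}\bigr)$, there is a regular $(n+1)$-gon $\poly{Q}_{n+1}$ with $A(n+1,\theta_{n+1}) = A$. Next I would form the $(n+1)$-gon $\poly{Q}'$ by adjoining a vertex of measure $\pi$ at the midpoint of one side of $\poly{Q}_n$; because that vertex lies on a geodesic side, $\poly{Q}'$ is a genuine convex embedded polygon with all edges geodesic, it bounds the same region as $\poly{Q}_n$, so $\vol \poly{Q}' = A$ and $\per \poly{Q}' = P(n,\theta_n)$. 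Since $\poly{Q}'$ has a vertex of measure $\pi$ while every vertex of $\poly{Q}_{n+1}$ has measure $\theta_{n+1} < \pi$, the two are not congruent, and \cref{prop:reg-is-best} (applied with $n$ replaced by $n+1$) gives $P(n+1,\theta_{n+1}) = \per \poly{Q}_{n+1} < \per \poly{Q}' = P(n,\theta_n)$. As $A$ was the area of an arbitrary regular $n$-gon, the perimeter of the regular polygon of fixed area strictly decreases in $n$.

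The only real obstacle is a bookkeeping point: one must be sure that the polygon $\poly{Q}'$, which has a straight angle, counts as ``another $(n+1)$-gon'' in the hypothesis of \cref{prop:reg-is-best}. This is consistent with the conventions already set up in this paper (polygons with angles of measure $\pi$ are explicitly allowed, e.g.\ the rectangle that ``looks like a triangle''), so no extra work is needed. If one preferred to use only strictly convex competitors, I would instead run a short limiting argument: push the inserted midpoint vertex slightly outward to obtain a strictly convex $(n+1)$-gon $\poly{Q}'_\varepsilon$ with area $A + \delta(\varepsilon)$ and perimeter $P(n,\theta_n) + \rho(\varepsilon)$, where $\delta(\varepsilon),\rho(\varepsilon) \to 0^+$; then by \cref{prop:reg-is-best} and monotonicity of the perimeter of a regular $(n+1)$-gon in its area, $P(n+1,\theta_{n+1}) < P\bigl(\text{regular }(n+1)\text{-gon of area }A+\delta(\varepsilon)\bigr) < P(n,\theta_n) + \rho(\varepsilon)$, and letting $\varepsilon \to 0$ recovers the inequality (with strictness preserved since the regular $(n+1)$-gon of area exactly $A$ still differs from the degenerate limit). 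Either way the computations involved are negligible; the substance is entirely in the edge-subdivision trick and \cref{prop:reg-is-best}.
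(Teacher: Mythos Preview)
Your proof is correct and follows essentially the same approach as the paper: add a vertex of measure $\pi$ to the regular $n$-gon to obtain a non-regular $(n+1)$-gon of the same area and perimeter, then apply \cref{prop:reg-is-best}. Your write-up simply supplies more detail (existence of $\poly{Q}_{n+1}$, the convention on straight angles, the optional limiting variant) than the paper's three-line version.
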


\begin{proof}
Let $\poly{Q}_n$ and $\poly{Q}_{n+1}$ be the regular polygons of a fixed
area with $n$ and $n+1$ sides. Let $\poly{Q}^*_{n+1}$ be an $(n+1)$-gon
formed by adding a vertex of measure $\pi$ to $\poly{Q}_n$.
By \cref{prop:reg-is-best},
\[  
    P(\poly{Q}_{n+1}) < P(\poly{Q}^*_{n+1}) = P(\poly{Q}_n). \qedhere 
\]
\end{proof}

\begin{remark}
    As expected, the perimeter of a regular $n$-gon of area $A$ is increasing as a function of $A$, for $0 < A < (n-2)\pi.$ By \cref{prop:Gauss-Bonnet} and \cref{prop:perimeter-n-theta}, the perimeter of the $n$-gon is
    \[2n\cosh^{-1}\left(\frac{\cos(\pi/n)}{\sin(((n-2)\pi-A)/2n)}\right),\] and it is increasing because $\cosh^{-1}$ and $\sin$ are increasing over $(0,\infty)$ and $(0, \pi/2)$ respectively. 
\end{remark}
\begin{corollary}
    \label{prop:3-k}
 	The regular $k$-gon has less perimeter than 
	any other $n$-gon of equal or greater area for $3 \le n \le k.$
\end{corollary}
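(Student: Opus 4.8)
The plan is purely formal: I would chain together the three monotonicity facts already in hand. Write $\poly{R}$ for the given regular $k$-gon, set $A_0 = \vol(\poly{R})$ and $P_k = P(\poly{R})$, and let $\poly{Q}$ be any $n$-gon with $3 \le n \le k$, $\vol(\poly{Q}) \ge A_0$, and $\poly{Q}$ not congruent to $\poly{R}$; the goal is $P(\poly{Q}) > P_k$. First I would dispose of a degenerate case: by \cref{prop:Gauss-Bonnet} every $n$-gon has area strictly below $(n-2)\pi$, so if $A_0 \ge (n-2)\pi$ there is no admissible $\poly{Q}$ and the claim is vacuous. Hence one may assume $A_0 < (n-2)\pi$, which in particular guarantees that the regular $n$-gon of any area in $(0,(n-2)\pi)$ exists --- in particular of area $A_0$ and of area $\vol(\poly{Q})$ (the latter being $<(n-2)\pi$ again by \cref{prop:Gauss-Bonnet}).

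Next I would let $\poly{Q}_n$ be the regular $n$-gon with $\vol(\poly{Q}_n) = \vol(\poly{Q})$ and $\poly{Q}'_n$ the regular $n$-gon with $\vol(\poly{Q}'_n) = A_0$, and assemble the chain
\[
  P(\poly{Q}) \;\ge\; P(\poly{Q}_n) \;\ge\; P(\poly{Q}'_n) \;\ge\; P_k ,
\]
where the first inequality is \cref{prop:reg-is-best}, the second is the monotonicity of the perimeter of a regular $n$-gon in its area (the remark following \cref{prop:reg-decreasing}, applicable since $0 < A_0 \le \vol(\poly{Q}) < (n-2)\pi$), and the third is \cref{prop:reg-decreasing} applied to the regular polygons of the common area $A_0$ with $n \le k$ sides. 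Finally I would check strictness: equality throughout would force $\poly{Q}$ to be regular (so $\poly{Q} = \poly{Q}_n$), to have area exactly $A_0$ (so $\poly{Q}_n = \poly{Q}'_n$), and $n = k$ (so $\poly{Q}'_n = \poly{R}$), hence $\poly{Q} = \poly{R}$, contrary to hypothesis; therefore at least one inequality is strict and $P(\poly{Q}) > P_k$.

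There is essentially no obstacle here: the corollary is a formal consequence of \cref{prop:reg-is-best}, \cref{prop:reg-decreasing}, and \cref{prop:Gauss-Bonnet}. The only points that require a moment's attention are confirming that the two intermediate regular $n$-gons actually exist (which is exactly why the Gauss--Bonnet area bound is invoked to handle the otherwise-vacuous range $A_0 \ge (n-2)\pi$) and tracking that some link of the chain is strict once $\poly{Q} \neq \poly{R}$; neither is substantive.
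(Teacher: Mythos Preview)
Your proof is correct and follows essentially the same approach as the paper, which simply asserts that the corollary ``follows immediately from \cref{prop:reg-is-best} and \cref{prop:reg-decreasing}.'' Your version is more explicit: you additionally invoke the remark after \cref{prop:reg-decreasing} (monotonicity of regular-$n$-gon perimeter in area) to handle the ``or greater area'' clause, and you track strictness carefully---both of which the paper's one-sentence proof leaves to the reader.
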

\begin{proof}
    The corollary follows immediately from \cref{prop:reg-is-best} and \cref{prop:reg-decreasing}.
\end{proof}

The following corollary is an easy step toward \cref{con:main-result}.

\begin{proposition}

\label{prop:smolhull}
Consider an $n$-gon $\poly{Q}$ of area $A_k = (k-6)\pi/3$. If the convex hull $\hull(\poly{Q})$ has $k$ or fewer vertices, then $P(\poly{Q})\geq P_k = P(\poly{R}_k)$, with equality only if $\poly{Q}\sim\hull(\poly{Q})=\poly{R}_k$.
\end{proposition}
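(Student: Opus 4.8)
The plan is to sandwich $P(\poly{Q})$ between $P_k$ and itself: first bound it below by the perimeter of its convex hull (the general convex-hull estimate), then bound that below by $P_k$ using \cref{prop:3-k}. Write $\hull=\hull(\poly{Q})$ and let $j$ be its number of vertices. Since $\poly{Q}$ has positive area $A_k=(k-6)\pi/3$ (recall $k\ge 7$), its convex hull is a genuine convex polygonal region, so $3\le j$, while $j\le k$ is the hypothesis. By the convex-hull properties recorded in the definition of $\hull$, we have $\operatorname{area}(\hull)\ge A_k$ and $P(\hull)\le P(\poly{Q})$.

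Next I would apply \cref{prop:3-k} to the convex $j$-gon $\hull$: since $3\le j\le k$ and $\operatorname{area}(\hull)\ge A_k=\operatorname{area}(\poly{R}_k)$, that corollary gives $P_k=P(\poly{R}_k)\le P(\hull)$. Moreover the corollary is \emph{strict} for every polygon other than $\poly{R}_k$ itself, so $P(\hull)=P_k$ can hold only when $\hull=\poly{R}_k$ (which in turn forces $j=k$ and $\operatorname{area}(\hull)=A_k$). Chaining the two estimates,
\[
  P_k \;\le\; P(\hull) \;\le\; P(\poly{Q}),
\]
which is the asserted inequality.

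It remains to treat equality. If $P(\poly{Q})=P_k$, then both inequalities above are equalities; the first one gives $\hull=\poly{R}_k$ as noted, so the task reduces to showing that $P(\poly{Q})=P(\hull)$ forces $\poly{Q}\sim\hull$. Working with the developed image of $\poly{Q}$ in $\H^2$ used to define $\hull$, the vertices of $\hull$ occur among the vertices of $\poly{Q}$ in the same cyclic order, and between two cyclically consecutive hull vertices $v$ and $w$ the boundary of $\poly{Q}$ traverses a chain of geodesic edges with the same endpoints as the single hull edge $\overline{vw}$. Since $\overline{vw}$ is the unique shortest path from $v$ to $w$ in $\H^2$, each such chain is at least as long as $\overline{vw}$, with equality only if the chain coincides with the segment $\overline{vw}$ pointwise --- that is, only if each intermediate vertex of $\poly{Q}$ on it has interior angle $\pi$. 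Summing over the chains recovers $P(\poly{Q})\ge P(\hull)$, and equality forces every chain to be a straight geodesic segment; hence $\poly{Q}$ is obtained from $\hull=\poly{R}_k$ by inserting only vertices of measure $\pi$, so $\poly{Q}\sim\hull=\poly{R}_k$. (The converse is immediate, since inserting $\pi$-vertices changes neither area nor perimeter.)

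I expect this last step to be the main obstacle: justifying cleanly that equality of perimeters forces $\poly{Q}\sim\hull(\poly{Q})$ relies on the hull vertices lying on $\partial\poly{Q}$ in the correct cyclic order and on the \emph{strict} shortening one gets by replacing a non-geodesic edge-chain by its chord. Both points are standard, but in the setting of a face of a tiling of a closed surface the passage to the developed image in $\H^2$ deserves an explicit sentence, and one should note that a positive-area region indeed has at least three hull vertices so that \cref{prop:3-k} genuinely applies.
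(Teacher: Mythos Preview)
Your argument is correct and follows exactly the paper's approach: use that $\hull(\poly{Q})$ has no less area and no more perimeter than $\poly{Q}$, then invoke \cref{prop:3-k}. The paper's proof is a single sentence and leaves the equality analysis implicit; your careful treatment of why $P(\poly{Q})=P(\hull)$ forces $\poly{Q}\sim\hull$ is a welcome elaboration rather than a different method.
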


\begin{proof}
    Recall $\hull(\poly{Q})$ has no less area and at least as much perimeter as $\poly{Q}$. \cref{prop:3-k} finishes the proof. 
\end{proof}
\begin{corollary}
    \label{cor:n-7concave}
    If an $n$-gon $\poly{Q}$ of area $A_k = (k-6)\pi/3$ has at least $n-k$ concave angles, then $P(Q) \geq P_k$ with equality if and only if there are exactly $n-k$ such angles and they are all exactly $\pi,$ and hence $\poly{Q}\sim R_k$.
\end{corollary}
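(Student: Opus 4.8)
The plan is to deduce this immediately from \cref{prop:smolhull} by showing the hypothesis forces $\hull(\poly{Q})$ to have at most $k$ vertices. The one geometric input I would establish first is that a vertex of a geodesic polygon whose interior angle is at least $\pi$ is never a vertex of the convex hull: near such a vertex $v$ the region $\poly{Q}$ occupies an angular sector of measure at least $\pi$, hence so does the larger set $\hull(\poly{Q})$; a convex region that spans an angle of at least $\pi$ at a boundary point has no corner there, so $v$ lies either in the interior of $\hull(\poly{Q})$ or in the relative interior of one of its edges, and in particular is not among the (minimally many) vertices of the hull. This argument is valid verbatim in $\H^2$, since it uses only that geodesics through $v$ still cut out half-plane-like sectors.

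Given this, since the vertex set of $\hull(\poly{Q})$ is a subset of the vertex set of $\poly{Q}$, and by hypothesis at least $n-k$ of the $n$ vertices of $\poly{Q}$ have interior angle $\geq\pi$, the hull has at most $n-(n-k)=k$ vertices. Now \cref{prop:smolhull} applies directly and yields $P(\poly{Q})\geq P_k$, with equality only if $\poly{Q}\sim\hull(\poly{Q})=\poly{R}_k$.

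It remains to identify the equality case. If $\poly{Q}\sim\poly{R}_k$, then $\poly{Q}$ is obtained from the regular $k$-gon $\poly{R}_k$ by inserting vertices of measure exactly $\pi$ along its sides; as $\poly{Q}$ has $n$ vertices while $\poly{R}_k$ has $k$, exactly $n-k$ vertices of $\poly{Q}$ have measure $\pi$ and the remaining $k$ have measure $2\pi/3<\pi$. Thus $\poly{Q}$ has precisely $n-k$ concave angles and each equals $\pi$. Conversely, passing from $\poly{Q}$ to $\poly{R}_k$ by deleting straight-angle vertices changes neither perimeter nor area, so $\poly{Q}\sim\poly{R}_k$ forces $P(\poly{Q})=P_k$. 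This establishes the stated ``if and only if'' together with the conclusion $\poly{Q}\sim\poly{R}_k$.

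Everything here is routine; the only step that needs genuine care is the borderline behavior of vertices with interior angle exactly $\pi$. These must count as ``concave'' for the vertex tally to come out right and for the equality description to match \cref{prop:smolhull}, so in the geometric lemma I would be careful to include the angle-$\pi$ case — where $v$ sits on the hull boundary but is not a corner — rather than only handling strictly reflex angles.
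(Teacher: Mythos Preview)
Your argument is correct and follows essentially the same route as the paper: reduce to \cref{prop:smolhull} by observing that each of the at least $n-k$ concave vertices of $\poly{Q}$ fails to be a vertex of $\hull(\poly{Q})$, leaving at most $k$ hull vertices, and then read off the equality case. The paper states this in one line, whereas you spell out the geometric reason concave vertices are absorbed into the hull and unpack the equivalence $\poly{Q}\sim\poly{R}_k$; the extra care with the angle-$\pi$ borderline is appropriate and matches what the paper's equality clause requires.
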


\begin{proof}
    The corollary is immediate from \cref{prop:smolhull}, because if $Q$ has at least $n-k$ concave angles, then the convex hull $H(Q)$ has $k$ or fewer vertices, with equality as claimed.
\end{proof}

\section{Monohedral Tilings of the Hyperbolic Plane} \label{sect:hyperbolic-plane}


We seek a least-perimeter tile of $\H^2$ of given area. For area $(n-6)\pi/3$ we conjecture that the regular $n$-gon, with $120^\circ$ angles, is best. For other areas there is no natural candidate. After \textcite{GS} and \textcite{MM} we prove the existence of
equilateral even-gonal tiles for wide ranges of areas.

\begin{conjecture}
\label{con:conjH2}
 
In\/ $\H^2$, the regular $n$-gon with $120^\circ$ angles has less perimeter than any non-equivalent tile of equal area. 
\end{conjecture}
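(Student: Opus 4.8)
The plan is to follow the reduction strategy outlined in the introduction, now in the setting of the whole plane. Since the regular $n$-gon $\poly{R}_n$ with $120^\circ$ angles exists only at area $A_n=(n-6)\pi/3$, we may fix that area and let $\poly{Q}$ be the prototile of a competing monohedral polygonal tiling of $\H^2$ with $\poly{Q}\not\sim\poly{R}_n$; after replacing $\poly{Q}$ by an equivalent polygon we may assume it has no angle of measure $\pi$, say with $m$ sides, and we must show $P(\poly{Q})>P_n$. First I would dispose of the case $m\le n$: by \cref{prop:reg-is-best} and \cref{prop:3-k} the regular $n$-gon has strictly smaller perimeter than any non-equivalent $m$-gon of area $A_n$, so this case is immediate. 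All of the work is in the case $m>n$.

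For $m>n$ the engine should be a combinatorial identity drawn from \cref{prop:Gauss-Bonnet}. Summing the Gauss--Bonnet area formula over a patch of tiles, using $\sum_{\text{tiles}}(\#\text{sides})=2E$ and the fact that the tile-angles around each vertex sum to $2\pi$, gives the per-tile density relation $m=\tfrac n3+2(V/F)$, hence $\sum_{d\ge2}(d-3)\,v_d=\tfrac{n-m}{2}$, where $v_d$ is the density of degree-$d$ vertices. Only the $d=2$ term is negative, so on average each tile carries at least $(m-n)/2$ degree-$2$ vertices. Since $\poly{Q}$ is non-degenerate, each degree-$2$ vertex has a concave ($>\pi$) angle on exactly one of its two incident tiles, and every tile is a congruent copy of $\poly{Q}$; counting concave-angle incidences therefore shows that $\poly{Q}$ itself has at least $\lceil(m-n)/2\rceil$ concave angles. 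If this can be improved to $m-n$ concave angles, then $\hull(\poly{Q})$ has at most $n$ vertices and \cref{prop:smolhull} together with \cref{cor:n-7concave} gives $P(\poly{Q})>P_n$ (strictly, because $\poly{Q}\not\sim\poly{R}_n$), which finishes the proof.

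Two points make this genuinely difficult. First, $\H^2$ is noncompact and, in contrast with the Euclidean honeycomb problem, the number of tiles meeting a geodesic ball $B_r$ grows at the same exponential rate as the number contained in $B_r$, so a naïve average over $B_r$ does not annihilate the boundary contribution; the remedy, as in \textcite{hales}, is to replace the global count by a \emph{local}, per-cell inequality — bounding the perimeter of each cell, or of its convex hull, below by $P_n$ plus a correction term linear in the number of its sides, arranged so that summing over any finite patch of tiles already yields the conclusion, boundary cells included. Second, and this is the main obstacle, the averaging only delivers about $(m-n)/2$ concave angles, a factor of two short of the $m-n$ the convex-hull argument wants: already the case $m=n+2$ is not covered. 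Bridging the gap requires the flattening procedure of the introduction — fill in a concave angle while simultaneously truncating the convex angle of the neighbouring tile that plugs it, an operation that preserves total area and strictly decreases perimeter — iterated until either the convex-hull bound applies or the shape is directly comparable to $\poly{R}_n$. The crux is showing that flattening never produces a self-intersecting polygon; this is already delicate when $m-n=3$, where several convex corners may plug several concave corners in nested arrangements, and controlling it uniformly as $m-n\to\infty$ is precisely what keeps the conjecture open. A realistic route is an induction on $m-n$ whose inductive step is a self-intersection-free flattening lemma.
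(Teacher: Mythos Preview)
This statement is labeled a \emph{Conjecture} in the paper and is not proved there; there is no ``paper's own proof'' to compare against. The paper's actual results concern the closed-surface analogue (\cref{con:main-result}), and even those stop at $n\le 10$ (\cref{prop:heptagon-best}), with the general closed-surface case deferred to the subsequent paper~\cite{hirsch}. The $\H^2$ version remains open.

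Your proposal is not a proof either, and to your credit you say so: you correctly isolate the two obstructions (the exponential boundary growth that kills naive averaging in $\H^2$, and the factor-of-two gap between the $(m-n)/2$ concave angles the counting delivers and the $m-n$ the convex-hull bound wants), and you end by calling the self-intersection control ``precisely what keeps the conjecture open.'' Your combinatorial identity $\sum_{d\ge 2}(d-3)v_d=(n-m)/2$ is correct and, on a closed surface, recovers exactly the content of \cref{cor:3}; the bound $\ell_2\ge (m-n)/2$ you extract matches what the paper gets there in the no-$\pi$-angle case. Note, though, that this bound is genuinely sharp, not an artifact of the method: the $9$-gonal tiles of \cref{sec:nonagonal-tiles} can have a single strictly concave angle (so $\ell_2=1=(9-7)/2$), which is why the paper must flatten rather than simply take a convex hull.

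One remark on strategy: the route that eventually works for closed surfaces in~\cite{hirsch} (sketched for the Euclidean case in \cref{hexagons}) is \emph{not} the flattening-to-a-$k$-gon program you outline, but rather a covering argument by convex hulls $Q_i^*$ combined with Jensen's inequality applied to the concave function $n\mapsto A(n)$ of \cref{lem:euc-area-n}. That approach sidesteps the self-intersection issue entirely, but it still relies on a finite sum over tiles and so does not transfer to $\H^2$ without confronting the boundary-growth problem you flag. In short: your diagnosis of the difficulties is accurate, but what you have written is a research plan, not a proof, and the paper does not claim one.
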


The following proposition provides a necessary and sufficient condition for a regular polygon to tile the hyperbolic plane.
\begin{proposition}
	\label{prop:regularpoly}
	A regular polygon of interior angle $\theta$ tiles $\H^2$ 
	if and only if $\theta$ divides $2\pi$.
\end{proposition}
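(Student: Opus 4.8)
The plan is to prove the two implications separately, with \cref{prop:Gauss-Bonnet} supplying the only quantitative input. For the forward direction, suppose a regular $n$-gon $\poly{Q}$ with interior angle $\theta$ admits a polygonal tiling of $\H^2$. Since $\H^2$ is simply connected, every face of the tiling is an embedded closed disk, so each copy of $\poly{Q}$ meets a given vertex $v$ of the tiling in at most one corner, and the edges incident to $v$ cut a small punctured disk about $v$ into exactly $\deg(v)$ sectors, one in each incident face. The angle of such a sector is an interior angle of $\poly{Q}$, and a regular polygon has all interior angles equal to $\theta$ (in particular none equal to $\pi$), so every sector has angle $\theta$ and hence $\deg(v)\,\theta=2\pi$. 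The tiling has at least one vertex because $\poly{Q}$ does, and $\theta<\pi$ forces $\deg(v)\ge 3$; thus $2\pi=\deg(v)\,\theta$ exhibits $\theta$ as a divisor of $2\pi$.

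For the converse, write $\theta=2\pi/m$ with $m\in\Z$; the bound $\theta<\pi$ gives $m\ge 3$. Because $\poly{Q}$ is a genuine hyperbolic $n$-gon, \cref{prop:Gauss-Bonnet} gives $(n-2)\pi-n\theta>0$, i.e.\ $m(n-2)>2n$, which rearranges to $(n-2)(m-2)>4$, equivalently $\tfrac1n+\tfrac1m<\tfrac12$; so a hyperbolic triangle $T$ with angles $\pi/2,\ \pi/n,\ \pi/m$ exists. The group generated by reflections in the three sides of $T$ is discrete and tiles $\H^2$ by copies of $T$ (Poincaré's polygon theorem / the classical theory of hyperbolic triangle groups), and amalgamating the $2n$ triangles around each order-$n$ vertex of this triangulation produces copies of the regular $n$-gon with interior angle $2\pi/m=\theta$ — this is the decomposition of a regular $n$-gon into $2n$ right triangles with angles $\pi/2,\pi/n,\theta/2$ used in the proof of \cref{prop:perimeter-n-theta}. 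Exactly $m$ of these copies close up around each order-$m$ vertex, so the copies meet edge-to-edge with no gaps or overlaps and tile $\H^2$. (Concretely this is the regular tessellation $\{n,m\}$; one may instead start from $\poly{Q}$ itself and check that repeatedly reflecting it in its own edges closes up, precisely because $m$ copies fill the angle at every vertex and all edges are congruent.)

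I expect the only real obstacle to be the last step: verifying that the reflected copies assemble into an honest tiling rather than overlapping copies. I would discharge this by invoking Poincaré's polygon theorem (or citing the established existence of the regular $\{n,m\}$ tessellation whenever $(n-2)(m-2)>4$); everything else is immediate from \cref{prop:Gauss-Bonnet} and the definition of a polygonal tiling.
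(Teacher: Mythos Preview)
Your argument is correct, but your route for the converse differs from the paper's. The paper simply asserts that one may ``surround one copy of $\poly{Q}$ with layers of additional copies,'' and as an alternative cites \cref{prop:margulis}. You instead pass to the $(2,n,m)$ triangle group: from $\theta=2\pi/m$ and Gauss--Bonnet you extract $(n-2)(m-2)>4$, build the right triangle with angles $\pi/2,\pi/n,\pi/m$, invoke Poincar\'e's polygon theorem to tile by it, and then amalgamate $2n$ triangles around each order-$n$ vertex into a copy of $\poly{Q}$ --- exactly the decomposition appearing in \cref{prop:perimeter-n-theta}. This is the classical construction of the Schl\"afli tessellation $\{n,m\}$ and is more self-contained than either of the paper's options: the layer-by-layer description leaves the no-overlap verification implicit, and \cref{prop:margulis} as stated requires $n\ge 4$ and angles at most $\pi/2$, so it misses regular triangles and the case $\theta=2\pi/3$. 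Your forward direction is likewise just a careful unpacking of the paper's ``of course.''
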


\begin{proof} Of course if $\poly{Q}$ tiles, 
$\theta$ divides $2\pi$. Conversely, as long as $\theta$ divides $2\pi$, you can form a tiling by surrounding one copy of $\poly{Q}$ with layers of additional copies.
Alternatively, this proposition follows directly from \cref{prop:margulis}. 
\end{proof}
 
\begin{remark}
    Similarly, if each angle of a triangle divides $\pi$, then the triangle tiles the hyperbolic plane.
\end{remark}

\begin{corollary}
An isosceles triangle $\poly{T}$ with angle $\theta_1$ dividing $2\pi$ and angles $\theta_2=\theta_3$ dividing $\pi$ tiles $\H^2$.
\end{corollary}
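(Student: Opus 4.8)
\emph{Plan.} The idea is to fuse copies of $\poly{T}$ into a single regular polygon and then invoke \cref{prop:regularpoly}. Write $m = 2\pi/\theta_1$, which is a positive integer since $\theta_1$ divides $2\pi$; because a hyperbolic triangle has $0<\theta_1<\pi$, in fact $m \geq 3$. Arrange $m$ copies of $\poly{T}$ in a cycle, each glued to the next along a leg (one of the two equal sides) with all of them sharing the apex vertex. Since the $m$ apex angles $\theta_1$ sum to exactly $2\pi$, the copies fit together around the apex with no gap or overlap, and their union is a regular $m$-gon $\poly{P}$ whose $m$ boundary edges are the bases of the triangles and whose interior angle at each boundary vertex is $\theta_2+\theta_3 = 2\theta_2$.

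\emph{Existence of $\poly{P}$.} The one point to check is that this $m$-gon genuinely lives in $\H^2$, i.e.\ that a regular $m$-gon of interior angle $2\theta_2$ exists. By \cref{prop:Gauss-Bonnet} this amounts to $A(m,2\theta_2) = (m-2)\pi - 2m\theta_2 > 0$, equivalently $2\theta_2 < (m-2)\pi/m = \pi - \theta_1$; but this is just $\theta_1 + \theta_2 + \theta_3 < \pi$, which holds because $\poly{T}$ is a hyperbolic triangle. Conversely, the construction is forced: connecting the center of a regular $m$-gon of interior angle $2\theta_2$ to its $m$ vertices produces $m$ mutually congruent isosceles triangles, each with apex angle $2\pi/m = \theta_1$ at the center and two base angles equal to $\theta_2$. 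Since a hyperbolic triangle is determined up to congruence by its three angles (\cref{LoC}), each such radial slice is congruent to $\poly{T}$, so $\poly{P}$ really is the union of $m$ copies of $\poly{T}$.

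\emph{Conclusion.} Since $\theta_2 = \theta_3$ divides $\pi$, the angle $2\theta_2$ divides $2\pi$, so by \cref{prop:regularpoly} the regular $m$-gon $\poly{P}$ tiles $\H^2$. Subdividing every tile of that tiling radially into its $m$ congruent slices yields a monohedral tiling of $\H^2$ by $\poly{T}$, as desired. The argument is short; the only step requiring care is verifying that the intermediate regular $m$-gon exists and is precisely the radial union of triangles, after which everything follows immediately from \cref{prop:regularpoly}.
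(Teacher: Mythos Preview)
Your proof is correct and follows essentially the same approach as the paper: assemble $2\pi/\theta_1$ copies of $\poly{T}$ around the apex to form a regular polygon with interior angle $2\theta_2$, then apply \cref{prop:regularpoly}. You simply include more detail (verifying existence of the regular $m$-gon and that its radial slices are congruent to $\poly{T}$) than the paper's two-line version.
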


\begin{proof}
For such a $\poly{T}$, form a regular polygon $\poly{Q}$ with interior angle $2\theta_2$ by attaching $2\pi/\theta_1$ copies of $\poly{T}$ at the vertex of measure $\theta_1$. 
By Proposition \ref{prop:regularpoly}, $\poly{Q}$ tiles $\H^2$. Thus $\poly{T}$ tiles $\H^2$.
\end{proof}

\begin{remark}
The preceding propositions suggest several immediate but important observations.
\begin{enumerate}
    \item The areas of regular polygonal tiles are discrete except at the integer multiples of $\pi$. This follows from the fact that for bounded area, $n$ is bounded above for a regular $n$-gonal tile of that area, and the areas of regular $n$-gons approach $(n-2)\pi$.
    \item There are only finitely many regular polygonal tiles of given area. 
    \item\label{item:lincom} If a polygon tiles the hyperbolic plane, then each angle is included in some positive integer linear combination that equals $2\pi$.
    \item The converse of \eqref{item:lincom} is false. For instance, if a triangle $\poly{T}$ has angles $\theta_1,\theta_2, \theta_3$ satisfying a unique equation $\theta_1 + 3\theta_2 + 5\theta_3 = 2\pi$, $\poly{T}$ does not tile $\H^2$. This remark is a corollary of the following theorem of \citeauthor{GS}. 
\end{enumerate}
\end{remark}

\begin{theorem}[\textcite{GS}, Thm.\ 6.2] \label{thm:gs}
Suppose a hyperbolic triangle $\poly{T}$ with vertex angles $\alpha_i$ satisfies exactly one 
equation of the form $\sum k_i\theta_i = 2\pi$ with nonnegative integral coefficients. Then $\poly{T}$ tiles $\H^2$ if and only if all the coefficients are at least $2$ and congruent to one another modulo $2$.
\end{theorem}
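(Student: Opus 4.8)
The plan is to read the hypothesis as a rigidity statement about \emph{vertex figures} and then prove the two implications separately: necessity by a count at a single vertex, and sufficiency by an explicit construction built outward in successive annuli and formalized through the regular production systems of \textcite{GS}. The rigidity is this: in any polygonal tiling by $\poly{T}$, the corners meeting at a vertex $v$ are each one of $\theta_1,\theta_2,\theta_3$ (each $<\pi$, since $\theta_1+\theta_2+\theta_3<\pi$) and they sum to $2\pi$, so if $a_i$ copies of $\theta_i$ meet at $v$ then $\sum a_i\theta_i=2\pi$, forcing $(a_1,a_2,a_3)=(k_1,k_2,k_3)$. If the tiling is not edge-to-edge, a side of some tile is subdivided by a vertex $v$ in its interior; then the corners meeting $v$ on the far side of that side total $\pi$, and doubling shows they realize $(k_1/2,k_2/2,k_3/2)$, so all $k_i$ are even there. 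We may therefore assume the tiling is edge-to-edge, and every genuine vertex uses exactly $k_i$ angles $\theta_i$.

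\emph{Necessity.} Every copy of $\poly{T}$ has one corner of each angle, each sitting at some vertex, so each $\theta_i$ occurs and $k_i\ge 1$. For the parity condition assume first that $\poly{T}$ is scalene, so its sides $a,b,c$ opposite $\theta_1,\theta_2,\theta_3$ have distinct lengths and hence every tiling edge has one of these three lengths. Fix a vertex $v$ and let $x,y,z$ be the numbers of edges at $v$ of lengths $a,b,c$. Each incident $a$-edge is flanked by two corners adjacent to side $a$, each therefore equal to $\theta_2$ or $\theta_3$; a double count of (corner, incident edge) pairs at $v$ gives $2x=k_2+k_3$, and likewise $2y=k_1+k_3$ and $2z=k_1+k_2$. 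Thus $k_1+k_2$, $k_1+k_3$, $k_2+k_3$ are all even, so $k_1\equiv k_2\equiv k_3\pmod 2$. (When $\poly{T}$ is isosceles or equilateral the phrase ``exactly one equation'' must be read modulo the identification of equal angles; the vertex figure is still rigid and the same count gives the parity condition on the reduced list of coefficients.) Finally, to exclude $k_i=1$: if some $k_i$ equals $1$, sending each tile to the vertex at its $\theta_i$-corner is a bijection between tiles and vertices, so on any compact quotient $V=F$, and with $E=3F/2$ one gets $\chi=V-E+F=F-3F/2+F=F/2>0$, which is impossible. The Euler count alone fails on $\H^2$, where balls grow exponentially, but the same bookkeeping carried out inside the production system excludes $k_i=1$ for arbitrary, possibly aperiodic, tilings (\textcite{MM}).

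\emph{Sufficiency.} Assume now all $k_i\ge 2$ and $k_1\equiv k_2\equiv k_3\pmod 2$. Place $m:=k_1+k_2+k_3$ copies of $\poly{T}$ around a common vertex $v_0$ in the forced cyclic order, filling angle exactly $2\pi$, to obtain a disk $D_1$. Inductively, given a disk $D_n$ assembled from copies of $\poly{T}$ and bounded by a simple edge-path, each boundary vertex has some corners already in place, and vertex rigidity dictates precisely which corners and edges must still be attached there, hence determines a unique candidate annulus $D_{n+1}\setminus D_n$. Three things must be checked: (i) the prescribed completions at adjacent boundary vertices agree, which uses the parity condition to supply a consistent global two-colouring of the tiles under which the gluing rules cohere; (ii) no newly attached tile meets the interior of $D_n$, which uses $k_i\ge 2$ to keep the frontier a simple arc and to prevent a tile from wrapping back onto itself; and (iii) the metric object produced embeds in $\H^2$, which holds because a hyperbolic annulus has as much room as the combinatorics requires. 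These are exactly the verifications packaged by the formalism of \textcite{GS}: the filling rule is a regular production system, the conditions $k_i\ge 2$ and equal parity are precisely those making that system valid and complete, and the general theorem on regular production systems then yields a tiling of $\H^2$ by $\poly{T}$.

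The main obstacle is steps (ii)--(iii) of the sufficiency argument: showing that the combinatorially forced extension is realized by an \emph{embedded} geometric annulus, with no tile from a later annulus colliding with one placed earlier. In the Euclidean plane this fails outright; it is the negative curvature of $\H^2$, together with the careful bookkeeping of the production system, that makes it work, and it is the same mechanism that rules out $k_i=1$ on the necessity side.
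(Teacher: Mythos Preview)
The paper does not prove this theorem; it is quoted without proof from \textcite{GS} and used as a black box in the subsequent \cref{lem:existence}, \cref{prop:unique-tiling}, and \cref{prop:generalized-gs}. There is therefore no proof in the paper to compare your proposal against.

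On its own merits: your necessity argument contains a clean parity computation at a vertex (the double count giving $2x=k_2+k_3$, etc.), which is correct for scalene $\poly{T}$ in an edge-to-edge tiling. However, the exclusion of $k_i=1$ is not established. Your Euler-characteristic count requires a compact quotient, which need not exist for an arbitrary tiling of $\H^2$; you acknowledge this and defer to ``bookkeeping carried out inside the production system'' and to \textcite{MM}, neither of which is an argument. (The citation to \textcite{MM} is also misplaced: that paper constructs aperiodic tilings and does not supply the needed combinatorial obstruction.)

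Your sufficiency argument is essentially circular. You outline the annular-extension strategy and then write that ``the conditions $k_i\ge 2$ and equal parity are precisely those making that system valid and complete, and the general theorem on regular production systems then yields a tiling.'' But verifying that the production system is valid and complete under exactly these hypotheses \emph{is} the content of Theorem~6.2 of \textcite{GS}; invoking it here assumes what is to be proved. The genuine work---your steps (ii) and (iii), showing that no collision occurs and that the successive annuli embed---is precisely what you label ``the main obstacle'' and then do not carry out.
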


We now relax the ``exactly one'' hypothesis of \cref{thm:gs}.

\begin{lemma} \label{lem:existence}
For any triangle $\poly{T}$ with angles $\alpha_1, \alpha_2,\alpha_3$ satisfying $\sum k_i\alpha_i = 2\pi$ for nonnegative integers $k_i$, there exists a scalene triangle $\poly{T}'$ whose angles satisfy this equation and no other nonnegative linear combination that sums to $2\pi$.
\end{lemma}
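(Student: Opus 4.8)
The plan is to perturb the given triangle $\poly{T}$ within the two-dimensional space of hyperbolic triangle angle-triples and argue that the "bad" set — triangles satisfying a second independent integral relation $\sum m_i\alpha_i = 2\pi$ — is a countable union of measure-zero sets, hence cannot exhaust any neighborhood. The space of admissible angle triples is the open simplex $\Delta = \{(\alpha_1,\alpha_2,\alpha_3) : \alpha_i > 0,\ \alpha_1+\alpha_2+\alpha_3 < \pi\}$, which is a $2$-manifold; by \cref{prop:Gauss-Bonnet} each such triple is realized by a hyperbolic triangle, unique up to isometry. The constraint $\sum k_i\alpha_i = 2\pi$ (with the given $k_i$) cuts out a line segment $L \subset \Delta$, which is one-dimensional, so it contains points arbitrarily close to the given $\poly{T}$ that are scalene (the non-scalene locus $\{\alpha_i = \alpha_j\}$ meets $L$ in at most finitely many points unless $L$ lies inside it, which happens only in degenerate coefficient cases one checks directly).

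The key step is then: for each tuple of nonnegative integers $\vec m = (m_1,m_2,m_3)$ not proportional to $\vec k$, the set $\{\vec\alpha \in L : \sum m_i\alpha_i = 2\pi\}$ is the intersection of two distinct lines in the $(\alpha_1,\alpha_2,\alpha_3)$-plane (distinct because $\vec m \not\parallel \vec k$ and both right-hand sides equal $2\pi$), hence is a single point, or empty. Since there are only countably many such $\vec m$, the union over all of them of these "extra relation" points is countable. A segment $L$ is uncountable, so we may choose $\poly{T}'$ corresponding to a point of $L$ that (i) avoids this countable bad set and (ii) avoids the finitely many isosceles points on $L$. Such a $\poly{T}'$ is scalene, satisfies $\sum k_i\alpha_i = 2\pi$, and satisfies no other nonnegative integral relation summing to $2\pi$.

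There is one genuine subtlety to dispatch: the segment $L$ must actually meet the open simplex $\Delta$ in more than a point — i.e., $L \cap \Delta$ must be uncountable — and it must not be forced into the isosceles locus. Since $\poly{T}$ itself lies in $L \cap \Delta$ and $\Delta$ is open, $L \cap \Delta$ is a relatively open subsegment of $L$ containing $\poly{T}$, so it is uncountable provided $L$ is not a single point; and $L$ is a single point only if the linear system pinning down $\vec\alpha$ from $\sum k_i\alpha_i = 2\pi$ alone is zero-dimensional, which cannot happen since that is one equation in three unknowns (two after we do not even impose $\sum\alpha_i$ fixed — here $L$ is genuinely a line in $\R^3$ intersected with $\Delta$, hence a segment). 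The only remaining worry, $L \subseteq \{\alpha_i = \alpha_j\}$ for some pair, forces $k_i = k_j$ together with the relation being symmetric in those two slots; in that case I would instead note that we still have freedom in the third coordinate and can pass to a scalene triangle by an arbitrarily small symmetric-breaking move off $L$ — but this cannot preserve $\sum k_i\alpha_i = 2\pi$, so actually in this degenerate case one argues that $L \not\subseteq \{\alpha_i=\alpha_j\}$ to begin with because generic points of the line $\sum k_i\alpha_i = 2\pi$ have unequal coordinates.

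The main obstacle I anticipate is purely bookkeeping: making the "two distinct lines meet in a point" claim airtight requires checking that $\vec m$ and $\vec k$ being non-proportional in $\R^3$ really does force the two affine hyperplanes (restricted to the relevant $2$-plane, or intersected as lines in $\R^3$) to be transverse, and separately ruling out the possibility that infinitely many of the bad points coincide with $\poly{T}$'s neighborhood accumulation — but countability of $\Z_{\geq 0}^3$ settles the latter immediately. I do not expect any hard geometry; the argument is a soft dimension/countability count once the realizability of all triples in $\Delta$ (\cref{prop:Gauss-Bonnet}) is in hand.
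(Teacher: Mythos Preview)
Your approach is the same as the paper's---a countability/dimension argument on the affine slice cut out by $\sum k_i\alpha_i=2\pi$---but you have miscounted dimensions. The open region $\Delta=\{\alpha_i>0,\ \sum\alpha_i<\pi\}$ is an open subset of $\R^3$, hence a $3$-manifold, not a $2$-manifold (there is no Euclidean constraint $\sum\alpha_i=\pi$ here). Consequently the constraint $\sum k_i\alpha_i=2\pi$ cuts out a $2$-dimensional piece $\Pi\cap\Delta$, not a line segment $L$; each competing relation $\sum m_i\alpha_i=2\pi$ then meets $\Pi$ in a line (or is empty), not a point. Your argument survives this correction unchanged---countably many lines cannot cover an open piece of a plane---but the statements ``$\Delta$ is a $2$-manifold,'' ``$L$ is one-dimensional,'' and ``two distinct lines meet in a point'' all need to be shifted up one dimension. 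This also dissolves your worry about $L$ being a single point.

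One place the paper is cleaner: rather than separately avoiding the isosceles loci $\{\alpha_i=\alpha_j\}$, the paper observes that scaleneness is automatic once you avoid all other nonnegative integer relations. Indeed, if $\alpha_1'=\alpha_2'$ then $(k_1\mp1)\alpha_1'+(k_2\pm1)\alpha_2'+k_3\alpha_3'=2\pi$ is a second relation with distinct coefficient vector, contradicting the choice of $\poly{T}'$. This lets you drop the somewhat tangled final paragraph about $L\subseteq\{\alpha_i=\alpha_j\}$ entirely.
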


\begin{proof}
The constraint $\sum k_i\theta_i = 2\pi$ determines a plane $\Pi$ which intersects the region of possible hyperbolic triangle angles 
\[
    B = \left\{ \sum_{0<\theta_i} \theta_i <\pi \right\}
\]
of the first octant of $\theta_1\theta_2\theta_3$ space. For integers $(k_1',k_2',k_3')\neq (k_1,k_2,k_3)$, the collection of affine subspaces 
\[
\Pi \cap \left\{\sum k_i'\theta_i = 2\pi\right\}
\] 
is a countable set of lines and empty sets in $\Pi$. Choose $(\alpha_1',\alpha_2',\alpha_3') \in \Pi \cap B$ lying on no such line. The triangle $T'$ with angles $\alpha_i'$ is scalene because 
\[
(k_1-1)\alpha_1' + (k_2+1)\alpha_2' + k_3\alpha_3' \neq k_1\alpha_1'+k_2\alpha_2'+k_3\alpha_3'
\] 
implies $\alpha_1' \neq \alpha_2'$. A similar argument shows each $\alpha_i'$ is distinct.
\end{proof}
\begin{remark}
    Denote $\poly{T}'/m$ as the triangle with angles $1/m$ times those of $\poly{T}'.$ The statement in \cref{lem:existence} can be strengthened so that $\poly{T}'$ satisfies the given equation and no other \emph{rational} combination of its angles sums to $2\pi$. Then, by \cref{thm:gs}, if $k_i \ge 1$, $\poly{T}'/m$ tiles for all even $m$. If the coefficients are at least 2 and congruent modulo 2, then $\poly{T}'/m$ tiles for all positive integers $m$.
\end{remark}
\begin{proposition}[cf.\ Thm.\ 4.5 of \cite{GS}] \label{prop:unique-tiling}
Consider a triangle $\poly{T}$ and a tile $\poly{T}'$. Suppose that every 
nonnegative integral linear combination $\sum k_i\theta_i=2\pi$ satisfied by the
angles of $\poly{T}'$ is also satisfied by the angles of $\poly{T}$. Then 
$\poly{T}$ tiles in the same way.
\end{proposition}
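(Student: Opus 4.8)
The plan is to show that any tiling of $\H^2$ by $\poly{T}'$ induces, by a combinatorial local modification at each vertex, a tiling by $\poly{T}$. The key structural fact we need is that a tiling of $\H^2$ by a single triangle is determined by the ``angle pattern'' at each vertex: at every vertex of the tiling, some multiset of the angles of $\poly{T}'$ is arranged cyclically around the point, and the multiplicities $(k_1, k_2, k_3)$ of each angle satisfy $\sum k_i \theta_i'=2\pi$. The hypothesis guarantees that whatever combination $(k_1,k_2,k_3)$ occurs at a vertex of the $\poly{T}'$-tiling, the \emph{same} combination is admissible for $\poly{T}$, i.e.\ $\sum k_i\theta_i = 2\pi$ where $\theta_i$ are the angles of $\poly{T}$. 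So the idea is to keep the combinatorial structure (the abstract cell complex together with which angle of which triangle sits at each vertex-corner) fixed, and re-realize it geometrically with $\poly{T}$ in place of $\poly{T}'$.

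First I would fix the underlying abstract triangulated complex $K$ coming from the $\poly{T}'$-tiling, together with the combinatorial data recording, for each corner of each face, which of the three angles $\theta_1', \theta_2', \theta_3'$ sits there (equivalently, an identification of each face of $K$ with $\poly{T}'$ respecting orientation). Next I would build a new metric on $K$ by declaring each face to be an isometric copy of $\poly{T}$ glued along edges according to $K$'s edge identifications; one must check the gluings are consistent in that edges identified in $K$ get matched sides of $\poly{T}$ of equal length, which follows because the side lengths of $\poly{T}$ are functions of its angles and the combinatorial data tells us which angle-pair bounds each edge — here I would invoke \cref{LoC} to see the edge lengths are well-defined from the angle data. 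Then the crucial point: around each vertex of $K$ the total cone angle in the new metric is $\sum k_i\theta_i$, which by hypothesis equals $2\pi$, so the glued surface is a smooth hyperbolic surface with no cone points; being simply connected (it is a modification of a tiling of $\H^2$, so $K$ is simply connected and the new surface is a complete simply connected hyperbolic surface) it is isometric to $\H^2$, and the images of the faces give the desired tiling by $\poly{T}$.

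The main obstacle I expect is the global step — verifying that the re-glued surface really is $\H^2$ rather than some incomplete or non-simply-connected hyperbolic surface, and in particular ruling out that the developing map fails to be a global isometry (e.g.\ the modification could a priori introduce a ``folding'' where faces overlap, or the development could run off to a geodesic boundary). The cleanest route is: $K$ is simply connected, each face maps isometrically into $\H^2$ under the developing map, cone angles are all exactly $2\pi$ so the developing map is a local isometry everywhere, and completeness is inherited from completeness of the original tiling of $\H^2$ (every Cauchy sequence lies in finitely many tiles); a complete simply connected hyperbolic surface is $\H^2$, and a local isometry from a complete connected manifold is a covering, hence here an isometry. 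One should also remark that $\poly{T}$ and $\poly{T}'$ might be congruent, in which case there is nothing to prove, and that \cref{thm:gs} and \cref{lem:existence} are the natural sources of triangles $\poly{T}'$ to which this proposition applies. A secondary subtlety worth a sentence: we must allow the cyclic orders of angles around a vertex to be whatever they were in the $\poly{T}'$-tiling, since changing the triangle does not change which angles meet at a vertex, only their sizes; no reordering is needed.
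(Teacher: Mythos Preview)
Your developing-map framework is a perfectly reasonable (and in some ways more careful) way to make precise what the paper means by ``tiles in the same way,'' but you have glossed over exactly the step where the paper does all its work: the edge-matching. You write that ``edges identified in $K$ get matched sides of $\poly{T}$ of equal length, which follows because the side lengths of $\poly{T}$ are functions of its angles and the combinatorial data tells us which angle-pair bounds each edge.'' The combinatorial data tells you which side of each \emph{face} the edge is---say side~$i$ on one face and side~$j$ on the adjacent one---and the $\poly{T}'$-tiling only guarantees that side~$i$ and side~$j$ of $\poly{T}'$ have the same length. You need side~$i$ and side~$j$ of $\poly{T}$ to have the same length, and nothing you have said implies this when $i\neq j$.

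The paper handles this by casework. If $\poly{T}'$ is scalene, its three sides have distinct lengths, so in any tiling an edge is always side~$i$ on both adjacent faces; then edge-matching for $\poly{T}$ is automatic. If $\poly{T}'$ is isosceles, say $\theta_2'=\theta_3'$, then side~$2$ could be glued to side~$3$ in the tiling, and the paper shows the hypothesis forces $\poly{T}$ to be isosceles as well: from any satisfied combination $k_1\theta_1'+k_2\theta_2'+k_3\theta_3'=2\pi$ with $k_2\neq k_3$ (which exists, after swapping if necessary), the swapped combination $k_1\theta_1'+k_3\theta_2'+k_2\theta_3'=2\pi$ is also satisfied by $\poly{T}'$, hence by hypothesis by $\poly{T}$; subtracting gives $\theta_2=\theta_3$. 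With this in hand, your gluing goes through. So your argument is repairable, but the missing ingredient is exactly the content of the paper's proof.
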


\begin{proof}
First consider the case where $\poly{T}'$ is scalene. Then the triangle $\poly{T}$ tiles in exactly the same way as $\poly{T}'$. The angles still sum to $2\pi$ around every vertex, and the edges match because a tiling by the scalene triangle $\poly{T}'$ always matches an edge to itself. 

Now suppose $\poly{T}'$ is isosceles with angles  $\alpha_1, \alpha_2 = \alpha_3$. Since $T'$ tiles, some linear combination $\sum k_i \alpha_i = 2\pi$ with $k_2 \neq 0$. If $k_2=k_3,$ decrease $k_2$ and increase $k_3$ by 1. Then $\alpha_i$ must also satisfy $k_1\alpha_1 + k_3 \alpha_2 + k_2 \alpha_3=2\pi$. Since $\poly{T}$ must satisfy these two equations and $k_2 \neq k_3$, $T$ must be isosceles. Therefore $\poly{T}$ tiles in exactly the same way as $\poly{T}'$. Angles still sum to $2 \pi$ around every vertex, and the edges match since both triangles are isosceles. 
\end{proof}

\begin{proposition} \label{prop:generalized-gs}
A triangle $\poly{T}$ tiles with every angle at every vertex if its angles 
$\alpha_i$ satisfy $\sum k_i\alpha_i = 2\pi$ for $k_i \ge 2$ congruent modulo 
$2$.
\end{proposition}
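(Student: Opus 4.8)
The plan is to reduce to the scalene case covered by \textcite{GS} (\cref{thm:gs}) and then transfer the resulting tiling back to $\poly{T}$ via \cref{prop:unique-tiling}. First, apply \cref{lem:existence} to the relation $\sum k_i\alpha_i = 2\pi$: this produces a scalene triangle $\poly{T}'$ whose angles $\alpha_i'$ satisfy this very equation, with the same coefficients $k_i$, and no other nonnegative integral combination summing to $2\pi$. Since by hypothesis every $k_i \geq 2$ and the $k_i$ are mutually congruent modulo $2$, the ``if'' direction of \cref{thm:gs} applies, so $\poly{T}'$ tiles $\H^2$.

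Next, I would examine the vertices of this tiling of $\poly{T}'$. At any vertex $v$, the angles of the tiles incident to $v$ sum to $2\pi$, so they give a nonnegative integral combination $\sum m_i\alpha_i' = 2\pi$. By the uniqueness clause of \cref{lem:existence}, $(m_1,m_2,m_3) = (k_1,k_2,k_3)$, and since each $k_i \geq 2 > 0$, every angle $\alpha_i'$ occurs at $v$. Thus $\poly{T}'$ already tiles with every angle at every vertex.

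Finally, transfer to $\poly{T}$: the only nonnegative integral combination of the angles of $\poly{T}'$ equal to $2\pi$ is $\sum k_i\alpha_i' = 2\pi$, and the angles of $\poly{T}$ satisfy $\sum k_i\alpha_i = 2\pi$ by hypothesis, so \cref{prop:unique-tiling} applies and $\poly{T}$ tiles in exactly the same combinatorial fashion as $\poly{T}'$ — same vertices, same incidences, same angle labels — whence each vertex again sees all three angles of $\poly{T}$. The one point that needs care is this last step: replacing $\poly{T}'$ by $\poly{T}$ must preserve the edge matchings. But this is precisely what \cref{prop:unique-tiling} is set up to guarantee: in a scalene-triangle tiling each edge is glued to an edge of equal length, hence of the same type, so swapping in $\poly{T}$ (with its angles, and therefore its edge-length order, in the same positions) respects every gluing while keeping the angle sum $2\pi$ at every vertex. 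Hence there is no substantive obstacle; all the work is already packaged in \cref{lem:existence}, \cref{thm:gs}, and \cref{prop:unique-tiling}, and the proof is just their composition in the correct order.
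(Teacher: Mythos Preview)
Your proposal is correct and follows essentially the same route as the paper: invoke \cref{lem:existence} to produce a scalene $\poly{T}'$ satisfying only the given relation, apply \cref{thm:gs} to tile with $\poly{T}'$ (with every angle at every vertex since each $k_i>0$), and then transfer the tiling to $\poly{T}$ via \cref{prop:unique-tiling}. Your additional remarks on why every angle appears at each vertex and why edge matchings survive the transfer are just elaborations of points the paper leaves implicit.
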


\begin{proof}
Suppose $\poly{T}$ satisfies $\sum k_i\theta_i = 2\pi$ with $k_i \ge 2$ congruent modulo $2$. By \cref{lem:existence}, there exists a scalene triangle $\poly{T}'$ that satisfies $\sum k_i\theta_i = 2\pi$ for those $k_i$ and no other nonnegative integers. By \cref{thm:gs}, $\poly{T}'$ tiles with every angle at every vertex because each $k_i$ is positive, and by \cref{prop:unique-tiling}, $\poly{T}$ tiles in the same way.
\end{proof}

We can now use \cref{prop:generalized-gs} to obtain certain tilings in the hyperbolic plane. 

\begin{proposition}\label{prop:tri211}
A triangle $\poly{T}$ with angles $\theta_i$ such that 
$2k\theta_1 + \theta_2 +\theta_3 = \pi$ for some positive integer $k$ tiles $\H^2$.
\end{proposition}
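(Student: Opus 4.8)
The plan is to deduce this immediately from \cref{prop:generalized-gs}. Starting from the hypothesis $2k\theta_1 + \theta_2 + \theta_3 = \pi$, I would simply multiply through by $2$ to obtain
\[
    4k\theta_1 + 2\theta_2 + 2\theta_3 = 2\pi .
\]
The coefficients $4k$, $2$, $2$ are each at least $2$ (here one uses $k \geq 1$) and are all even, hence pairwise congruent modulo $2$. Thus \cref{prop:generalized-gs} applies verbatim and shows that $\poly{T}$ tiles $\H^2$ — in fact with every angle appearing at every vertex of the tiling, although only the bare tiling statement is claimed.

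Two small points deserve a sentence of care. First, one should confirm that $\poly{T}$ is an honest hyperbolic triangle, i.e.\ that $\theta_1 + \theta_2 + \theta_3 < \pi$; but since $2k \geq 2$ we have $2\theta_1 + \theta_2 + \theta_3 \leq 2k\theta_1 + \theta_2 + \theta_3 = \pi$, so $\theta_1 + \theta_2 + \theta_3 \leq \pi - \theta_1 < \pi$ because the angles of a triangle are positive. Second, it is worth remarking that this is exactly why the hypothesis is phrased with an even leading coefficient $2k$ and with the sum equal to $\pi$ rather than $2\pi$: doubling the relation then lands precisely in the ``all coefficients even and $\geq 2$'' regime required by \cref{prop:generalized-gs}.

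I do not anticipate any genuine obstacle here: the proposition is a direct specialization of \cref{prop:generalized-gs}, and the only work is clearing the factor of $\tfrac12$ in the angle constraint and checking the trivial inequalities above. If a more self-contained argument were desired, one could alternatively reflect $\poly{T}$ across the side opposite $\theta_1$ to build an isosceles triangle and then assemble copies explicitly, but invoking \cref{prop:generalized-gs} is cleaner and already available.
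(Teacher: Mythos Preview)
Your proposal is correct and is exactly the approach the paper takes: it simply says the proposition follows immediately from \cref{prop:generalized-gs}, which amounts to your observation that doubling the relation gives coefficients $4k,2,2$ that are all $\geq 2$ and even. Your extra sanity checks (that $\theta_1+\theta_2+\theta_3<\pi$, and the remark on why the hypothesis is phrased as it is) are fine but not needed for the paper's one-line proof.
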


\begin{proof}
This proposition follows immediately from \cref{prop:generalized-gs}.
\end{proof}

\begin{proposition} \label{triapi}
There is a non-equilateral isosceles triangular tile $\poly{T}$ of $\H^2$ for all possible triangular areas $A$, i.e., for $0<A<\pi$. 
\end{proposition}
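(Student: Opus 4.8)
The plan is to build the desired tiles directly from \cref{prop:tri211}, which produces a tiling of $\H^2$ as soon as the three angles $\theta_1,\theta_2,\theta_3$ of a triangle satisfy $2k\theta_1+\theta_2+\theta_3=\pi$ for some positive integer $k$. I will look for an isosceles triangle whose apex angle is $\alpha$ and whose two base angles are $\beta=\theta_2=\theta_3$, so that the hypothesis of \cref{prop:tri211} collapses to the single linear relation $2k\alpha+2\beta=\pi$. Solving for $\beta$ and substituting into the Gauss--Bonnet area formula for a triangle from \cref{prop:Gauss-Bonnet}, $A=\pi-\alpha-2\beta$, yields the clean identity $A=(2k-1)\alpha$. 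Hence for each fixed $k$ the admissible apex angles $\alpha\in\bigl(0,\tfrac{\pi}{2k}\bigr)$ — the range forced by requiring $\beta=(\pi-2k\alpha)/2>0$ — produce isosceles tiles realizing exactly the areas in $\bigl(0,\pi-\tfrac{\pi}{2k}\bigr)$, and these intervals exhaust $(0,\pi)$ as $k\to\infty$.

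Concretely, given a target area $A\in(0,\pi)$, I will pick an integer $k$ large enough that $A<\pi-\tfrac{\pi}{2k}$, i.e.\ any $k>\tfrac{\pi}{2(\pi-A)}$, and then set $\alpha:=A/(2k-1)$ and $\beta:=(\pi-2k\alpha)/2$. By construction $\alpha>0$ and $\beta>0$; the angle sum $\alpha+2\beta=\pi-A<\pi$ confirms that these three angles bound a genuine hyperbolic triangle $\poly{T}$; and $2k\alpha+2\beta=\pi$ is precisely the hypothesis of \cref{prop:tri211}, so $\poly{T}$ tiles $\H^2$. What remains is to rule out the possibility that $\poly{T}$ is equilateral: equilaterality would force $\alpha=\beta$, hence $\alpha=\tfrac{\pi}{2k+2}$ and $A=(2k-1)\pi/(2k+2)$. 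Since $k\mapsto(2k-1)/(2k+2)$ is strictly increasing, this pins down $k$ uniquely, so at most one value of $k$ is forbidden; choosing any larger admissible $k$ instead produces a non-equilateral isosceles triangular tile of area exactly $A$.

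I expect no genuinely hard step here. The one point requiring care is the non-equilateral clause, handled by the observation above that equilaterality can occur for at most one admissible $k$, leaving infinitely many valid choices. I would also double-check that \cref{prop:tri211} is being invoked legitimately — its statement is symmetric in $\theta_2$ and $\theta_3$, so identifying these with the two equal base angles of an isosceles triangle is fine — and verify the elementary fact that $\bigcup_{k\ge 1}\bigl(0,\pi-\tfrac{\pi}{2k}\bigr)=(0,\pi)$, which is immediate.
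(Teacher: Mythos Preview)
Your proof is correct and follows essentially the same construction as the paper: both set the apex angle to $\theta_1=A/(2k-1)$ and the base angles to $\theta_2=\theta_3=\pi/2-k\theta_1$, then invoke \cref{prop:tri211}. The only cosmetic difference is in ruling out equilaterality: the paper simply takes $k$ large enough that $\theta_1<\theta_2$, whereas you observe that equilaterality forces $A=(2k-1)\pi/(2k+2)$ for a unique $k$ and then avoid that value---both arguments are valid and amount to the same thing.
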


\begin{proof}
Let $\poly{T}$ be the hyperbolic isosceles triangle with angles 
\begin{align*}
    \theta_1 &= \frac{ A}{2k - 1}, \\ 
    \theta_2=\theta_3 &= \pi/2 - k\theta_1,
\end{align*}    
    for some integer $k > \pi/(2\pi-2A)$ large enough to make $\theta_1<\theta_2=\theta_3$. 
By Gauss-Bonnet, $\poly{T}$ has area $A$. By Proposition \ref{prop:tri211}, $\poly{T}$ tiles. 
\end{proof}

\begin{corollary} \label{degentri}
    There is a $n$-gonal tile of $\H^2$ for any given area $0<A<\pi$.
\end{corollary}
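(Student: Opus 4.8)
The plan is to produce the required tile as a \emph{degenerate} $n$-gon: a triangular tile of area $A$ into whose boundary we insert $n-3$ vertices of measure $\pi$, chosen carefully enough that the insertion does not disturb the tiling. Fix an integer $n\ge 3$ and an area $A$ with $0<A<\pi$. First I would invoke \cref{triapi} to obtain a non-equilateral isosceles triangular tile $\poly{T}$ of $\H^2$ of area $A$; write $b$ for the length of its base and $\ell$ for the common length of its two legs, so that $b\ne\ell$ precisely because $\poly{T}$ is not equilateral. Let $\poly{Q}$ be the polygon obtained from $\poly{T}$ by promoting to vertices the $n-3$ points that cut the base into $n-2$ equal geodesic subsegments. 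Then $\poly{Q}$ is an $n$-gon with $\poly{Q}\sim\poly{T}$, each new vertex has interior angle $\pi$, and neither perimeter nor area changes, so $\poly{Q}$ has area $A$.

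Next I would check that the tiling of $\H^2$ by $\poly{T}$ supplied by \cref{triapi} is, verbatim, a polygonal tiling by $\poly{Q}$. The faces and the geodesics carrying the edges are unchanged; we have only cut each base edge into $n-2$ collinear geodesic pieces and added $n-3$ new vertices on it, each of degree $2$, which the definition of a tiling permits. The one thing to verify is that this subdivision is globally consistent: whenever a side of one copy of $\poly{T}$ is identified with a side of a neighbouring copy, the marked points on the two sides must coincide. Since we are tiling the simply connected space $\H^2$, every edge borders exactly two distinct copies, and since an embedded multigraph cannot have a vertex in the interior of an edge, adjacent copies meet edge-to-edge. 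Because $b\ne\ell$, a base edge can only ever be identified with another base edge, so a marked base point is never required to match an unmarked leg. Finally, the set of $n-3$ cut points of a base is invariant under the reflection interchanging that base's two endpoints, hence it is carried onto the corresponding cut set of the adjacent base whether or not the identifying isometry preserves orientation. This makes $\poly{Q}$ a valid monohedral polygonal tile of $\H^2$ of area $A$, and since $n$ and $A$ were arbitrary, the corollary follows.

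The one genuinely delicate point is the consistency check in the second paragraph, and the reason the construction places \emph{all} the extra vertices on the base (rather than distributing them over all three sides) is exactly to make that check trivial: the base is the side of $\poly{T}$ whose length is not repeated, so base edges form a self-contained family among the identified edges. An alternative would be to glue $n-2$ copies of a smaller triangular tile (of area $A/(n-2)<\pi$, which again exists by \cref{triapi}) into a fan-shaped convex $n$-gon, but then exhibiting a tiling by that $n$-gon seems to require building a new tiling rather than reusing the triangular one, so I would not pursue that route.
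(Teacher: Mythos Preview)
Your proposal is correct and follows essentially the same approach as the paper: invoke \cref{triapi} for a non-equilateral isosceles triangular tile of area $A$, then insert $n-3$ equally spaced vertices on the side of distinct length. The paper's proof is a single sentence and omits the consistency check you carry out in your second paragraph, but the idea and the reason for placing the extra vertices on the distinguished side are identical.
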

\begin{proof}
    By \cref{triapi}, there exists a non-equilateral triangular tile $\poly{T}$ of area $A$. Choose a side of distinct length, and add $n-3$ equally spaced vertices to get a degenerate $n$-gonal tile of area $A$.
\end{proof}

\begin{remark}
\textcite[Thm.\ 5]{MM} explicitly construct strictly convex $n$-gonal tiles of every possible area $0<A<(n-2)\pi$ for $n \ge 5$. The tiling is generically nonperiodic, although invariant under a discrete group of symmetries. Their Theorem 4 constructs some equilateral tiles for all $n \geq 3$ by perturbing the regular $n$-gon and using \cref{prop:margulis} below.
\end{remark}

\begin{proposition} \label{rhombus}
There is a rhombic tile of $\H^2$ for all possible quadrilateral areas $A$, i.e. for $0 < A < 2 \pi$.
\end{proposition}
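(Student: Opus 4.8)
The plan is to reduce to the non-equilateral isosceles triangular tile supplied by \cref{triapi} and glue two copies of it. A rhombus has angles $\alpha,\beta,\alpha,\beta$ with opposite angles equal (an equilateral quadrilateral is automatically of this form, by cutting along a diagonal and using $\mathrm{SSS}$), and area $2\pi-2(\alpha+\beta)$, so rhombi realize exactly the areas in $(0,2\pi)$. Given a target area $A$ with $0<A<2\pi$, I would first invoke \cref{triapi} to obtain an isosceles triangular tile $\poly{T}$ of $\H^2$ of area $A/2\in(0,\pi)$, with apex angle $\theta_1$, base angles $\theta_2=\theta_3\neq\theta_1$, two legs of common length $\ell$ (equal by the hyperbolic isosceles triangle theorem), and base of length $b$; since $\poly{T}$ is not equilateral, $b\neq\ell$.

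Next I would observe that in any tiling of $\H^2$ by $\poly{T}$ — and \cref{triapi} guarantees one exists — every edge of the tiling has a well-defined length and is shared by two tiles along edges of that same length. As $b\neq\ell$, the unique base edge of each tile is therefore glued to the base edge of exactly one neighbor, which defines a perfect matching of the tiles into pairs. Each such pair is a union of two copies of $\poly{T}$ reflected across their common base, hence an equilateral quadrilateral $\rho$ with all four sides equal to $\ell$ and opposite angles $\theta_1$ and $2\theta_2$; since $\theta_1<\pi$ and $2\theta_2<\pi$ it is a genuine convex rhombus, and its area is $2\cdot(A/2)=A$ (also confirmed by \cref{prop:Gauss-Bonnet}, since $2\theta_1+4\theta_2=2\pi-A$). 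Grouping along this matching turns the tiling by $\poly{T}$ into a monohedral tiling of $\H^2$ by the rhombus $\rho$, which is the desired conclusion. One could equally start from the right triangle with angles $\pi/2,\alpha/2,\beta/2$ together with \cref{prop:tri211}, but the decomposition of a rhombus into two isosceles triangles keeps the bookkeeping lightest.

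The step that needs genuine care — and the only real obstacle — is verifying that ``pair along the base edge'' really is a perfect matching whose pairs are embedded rhombi rather than degenerate or self-overlapping figures. This is routine but must be stated rather than hand-waved: $\H^2$ is simply connected, so no tile is identified with itself; edge identifications preserve length, so base edges meet base edges and every tile lies in exactly one pair; and the two triangles of a pair lie on opposite sides of their shared base with total angle sum $2\theta_1+4\theta_2<2\pi$, so their union is a simple convex quadrilateral. If a cleaner justification of the length-respecting identification is wanted, \cref{prop:unique-tiling} lets one present the tiling by $\poly{T}$ as combinatorially a tiling by a scalene triangle, for which matching edges to edges of equal length is immediate.
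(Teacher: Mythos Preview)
Your proof is correct and follows essentially the same approach as the paper: invoke \cref{triapi} to obtain a non-equilateral isosceles triangular tile of area $A/2$, then pair tiles along the side of distinct length to obtain a rhombic tiling of area $A$. Your version supplies more justification than the paper's (the embeddedness of the glued pair, why base meets base), but the underlying argument is identical.
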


\begin{proof}
By \cref{triapi}, there exists a non-equilateral isosceles triangular tile of area $A/2$. Consider a tiling by this isosceles triangle. Pair tiles connected by the side of distinct length. Each pair of isosceles triangles forms the same rhombus of area $A$, and this rhombus tiles.
\end{proof}

\begin{remark}
    \textcite[Thm.\ 4]{MM} construct some rhombic tiles, but only for some areas, by perturbing the regular $4$-gon.
\end{remark}

\begin{conjecture} \label{quadrilateralgs}
A quadrilateral with distinct angles $\theta_i$ tiles if and only if  
\[ 
    \Sigma k_i\theta_i = 2\pi,
\]
for positive integers $k_i$ congruent modulo $2$, none or all of which are $1$. 
\end{conjecture}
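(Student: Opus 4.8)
We outline a plausible route to \cref{quadrilateralgs}. The plan is to run, one dimension up, the argument that produced \cref{prop:generalized-gs} from \cref{thm:gs}: treat the ``only if'' and ``if'' halves separately, and for the ``if'' half interpose a quadrilateral analogue of \cref{thm:gs} together with analogues of \cref{lem:existence} and \cref{prop:unique-tiling}.

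For ``only if'', suppose a quadrilateral $\poly{Q}$ with distinct angles $\theta_1,\dots,\theta_4$ tiles $\H^2$. Every vertex of the tiling supplies a relation $\sum k_i\theta_i=2\pi$ with nonnegative integral $k_i$, and, as already noted, each $\theta_i$ appears with positive coefficient in at least one such relation. The first step is to amalgamate these into a single ``master'' relation with all four coefficients positive: using that the angles are distinct, one traces how the four sides of $\poly{Q}$ are forced to match across the tiling — a side may meet only a side whose ordered pair of endpoint-angles is the appropriate one — and this rigidity should link all four corners into one relation. A parity argument — a checkerboard $2$-coloring of the tiles compatible with the side-matching, equivalently tracking the cyclic order of side-lengths around a vertex through a full turn — should then force the four coefficients to be congruent modulo $2$. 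Finally, a forced-local-configuration argument in the style of \textcite{GS} should exclude a coefficient equal to $1$, and \cref{prop:Gauss-Bonnet} excludes all four being $1$ outright, since a hyperbolic quadrilateral has $\sum\theta_i<2\pi$.

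For ``if'', one is given $\poly{Q}$ with distinct angles and a relation $\sum k_i\theta_i=2\pi$ with all $k_i\ge 2$ and congruent modulo $2$ (the ``all $k_i=1$'' alternative being vacuous). Cutting $\poly{Q}$ along a diagonal into two triangles is tempting but unhelpful — the pieces need not tile individually, and even when they do a compatible reassembly is not automatic — so I would instead construct the tiling directly. The core is a quadrilateral version of \cref{thm:gs}: a quadrilateral satisfying \emph{exactly one} angle relation, that relation being of the admissible form, tiles $\H^2$; I expect to obtain this by adapting Goodman--Strauss's regular production systems, growing the tiling corona by corona and checking that the relation plus the congruence modulo $2$ lets every partial corona be completed. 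Granting that, a dimension count in the $5$-parameter space of hyperbolic quadrilaterals gives the analogue of \cref{lem:existence} (a quadrilateral realizing a prescribed admissible relation and no other rational one), and an edge-and-angle matching argument gives the analogue of \cref{prop:unique-tiling} (a quadrilateral whose angle relations are all implied by those of a tiling quadrilateral tiles the same way); then one concludes exactly as in \cref{prop:generalized-gs}.

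The principal obstacle has no triangular precedent: a hyperbolic triangle is pinned down by its angles, but a hyperbolic quadrilateral with four prescribed distinct angles retains one free modulus, so non-congruent quadrilaterals can share all four angles. Hence the analogue of \cref{prop:unique-tiling} cannot argue ``the sides match because the shapes coincide''; the side-matching pattern of the constructed tiling imposes metric relations on the four sides of $\poly{Q}$, and one must show these hold — or can be arranged to hold — for the given $\poly{Q}$ rather than for a lucky member of its one-parameter family. It is possible that for some admissible distinct-angle quadrilaterals the forced side-relations genuinely fail, in which case the conjecture would need the extra hypothesis that $\poly{Q}$ be suitably generic; pinning down exactly which quadrilaterals in each family tile is, I expect, the crux. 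A secondary burden is simply size: four side-types rather than three enlarge both the production-system automaton and the case analyses, and degenerate matchings — a side glued to itself, as when a single hyperbolic octagon tiles the two-holed torus — must be carried along throughout.
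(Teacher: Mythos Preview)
The paper offers no proof of this statement: it is explicitly labeled a \emph{Conjecture} and is left open. There is therefore nothing in the paper to compare your proposal against.

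Your write-up is in any case not a proof but, as you say yourself, an outline of a plausible route, and you are candid about its gaps. The decisive one is the one you flag at the end: unlike a hyperbolic triangle, a hyperbolic quadrilateral with prescribed distinct angles lives in a one-parameter family, so knowing that the angles satisfy an admissible relation $\sum k_i\theta_i=2\pi$ does not pin down the side-lengths. The analogue of \cref{prop:unique-tiling} therefore cannot go through as stated --- the step ``the edges match because a tiling by the scalene triangle always matches an edge to itself'' has no quadrilateral counterpart, and the Goodman--Strauss production-system machinery you invoke for the analogue of \cref{thm:gs} is likewise built around triangles being rigid. Until that metric obstruction is resolved, the ``if'' direction remains genuinely open, and as you yourself note, it may even be false for some members of the family, which would require amending the conjecture.

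For the ``only if'' direction, your parity and amalgamation arguments are heuristics rather than proofs: distinct angles do not by themselves force a unique side-matching pattern (two sides of a quadrilateral can have equal length without any angle coincidence), so the checkerboard $2$-coloring you propose need not exist, and the claim that the four vertex-relations can be merged into a single master relation with all coefficients positive is asserted but not argued. These would need substantial work even to reach the level of a sketch.
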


The following proposition of \textcite{MM} gives a sufficient condition for equilateral $n$-gonal tiles, $n\geq 4$, which \citeauthor{MM} use to construct some aperiodic tiles.
Our \cref{prop:evenequilateral} provides a general construction of equilateral $n$-gons, and then our \cref{prop:n+2} constructs equilateral even-gonal tiles of a wide range of areas.

\begin{proposition}[\textcite {MM}, Prop.\ 2.2] \label{prop:margulis}
Let $\poly{Q}$ be a convex equilateral polygon in $\H^2$ with $n \geq 4$ vertices and angles $\theta_1,\dotsc,\theta_n$ at most $\pi/2$. Assume that any three angles (allowing repetition) may be complemented by more (allowing repetition) to sum to $2\pi$.
Then $\poly{Q}$ tiles $\H^2$.
\end{proposition}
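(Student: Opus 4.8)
The plan is to reconstruct the ``corona'' (generation) argument of \textcite{MM}. I would build the tiling as a nested union of finite patches $\poly{Q} = P_0 \subset P_1 \subset P_2 \subset \cdots$, where $P_{k+1}$ is obtained from $P_k$ by gluing a fresh copy of $\poly{Q}$ along every boundary edge of $P_k$ and, when the angular gap requires it, a few more copies fanning around the boundary vertices. Throughout the induction I would maintain two invariants: (a) $P_k$ is a topological disk bounded by an embedded geodesic polygon; and (b) every vertex of $P_k$ in its interior is \emph{complete}, i.e.\ the copies around it have angles summing to exactly $2\pi$. Granting the invariants for all $k$, together with the fact (also to be proved) that the $P_k$ exhaust the plane, the union $\bigcup_k P_k$ is a monohedral polygonal tiling of a simply connected complete hyperbolic surface with no cone points, hence of $\H^2$.

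The first thing to check is that a corona step can always be performed; this is a finite combinatorial extension lemma, and it is where all three hypotheses enter. Equilaterality of $\poly{Q}$ means every edge has the same length, so a fresh copy can be glued onto \emph{any} boundary edge of $P_k$ along \emph{any} edge of $\poly{Q}$ — gluing is never obstructed by side lengths, only by the requirement that angles close up. Convexity of $\poly{Q}$ guarantees that a copy attached to a boundary edge lies outside $P_k$ and does not fold back over it, so the copies placed around a boundary vertex $v$ genuinely sweep out the angular gap $2\pi-\phi_k(v)$, where $\phi_k(v)$ is the angle $P_k$ currently occupies at $v$. Closing up $v$ amounts to choosing a multiset of angles of $\poly{Q}$ that fills that gap, and the delicate point is that the copies glued along $v$'s two incident boundary edges contribute angles at $v$ that are simultaneously constrained at the \emph{other} ends of those edges; reconciling these constraints around the whole boundary at once is exactly what the hypothesis ``any three angles may be complemented by more to sum to $2\pi$'' is designed to permit, the relevant triple being one pre-existing angle at $v$ together with the two angles forced by its two incident boundary edges, while the bound $\theta_i \le \pi/2$ ensures no such forced partial sum can overshoot $2\pi$, so there is always room to complete. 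I expect that verifying ``three'' genuinely suffices — that the freedom available in each corona, combined with the angle bound, reduces every local completion to an instance of the hypothesis — will require careful bookkeeping, and this is the first substantive step.

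The step I expect to be the real obstacle is invariant (a): showing that $P_k$ stays embedded, so that $\partial P_k$ never crosses itself and a new corona never collides with an earlier part of the patch. This is a genuine danger in $\H^2$, where interior angles of at most $\pi/2$ make boundaries turn sharply and a layer could in principle curl around. My plan is a monotonicity estimate: use convexity of $\poly{Q}$ and the bound $\theta_i \le \pi/2$ to show that $P_k$ contains a hyperbolic disk whose radius grows at least linearly in $k$ and that $\partial P_{k+1}$ lies in the unbounded component of $\H^2 \setminus \partial P_k$, so that each corona strictly encloses its predecessor; this simultaneously yields embeddedness, completeness of the interior vertices (they are only ever filled up to exactly $2\pi$, never beyond), and exhaustion of $\H^2$. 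With these in hand the induction closes. This proposition is Proposition~2.2 of \textcite{MM}, which one may simply invoke; the outline above indicates what a self-contained proof involves.
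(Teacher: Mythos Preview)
The paper does not prove this proposition at all: it is stated with attribution to \textcite{MM}, Proposition~2.2, and simply invoked wherever needed (e.g.\ in \cref{prop:regularpoly}, \cref{prop:n+2}). So there is no ``paper's own proof'' to compare against; your closing sentence --- that one may simply cite \textcite{MM} --- is exactly what the paper does.

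As for the sketch itself, it correctly identifies the architecture of the Margulis--Mozes argument: a corona-by-corona construction in which equilaterality makes every gluing length-compatible, convexity keeps new copies on the outside, the bound $\theta_i\le\pi/2$ prevents overshoot at boundary vertices, and the ``any three angles'' hypothesis guarantees each partially filled vertex can be completed. You are also right that embeddedness of the growing patch is the substantive issue. That said, what you have written is an outline, not a proof: the ``careful bookkeeping'' reducing every local completion to a three-angle instance, and the ``monotonicity estimate'' giving embeddedness and exhaustion, are asserted rather than carried out. If the intent is merely to record how a self-contained argument would go, this is fine; if the intent is to actually supply one, those two steps need to be written out in full.
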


To prove the existence of many $2n$-gonal \emph{tiles} in \cref{prop:n+2}, we need \cref{prop:evenequilateral} about the existence of equilateral $2n$-gons (\cref{fig:my_label}). Note that by Gauss-Bonnet, as the sum of half the angles approaches $(n-1)\pi$, the area goes to~$0$.
\begin{figure}[ht]
    \centering
    \includegraphics[width=0.5\textwidth]{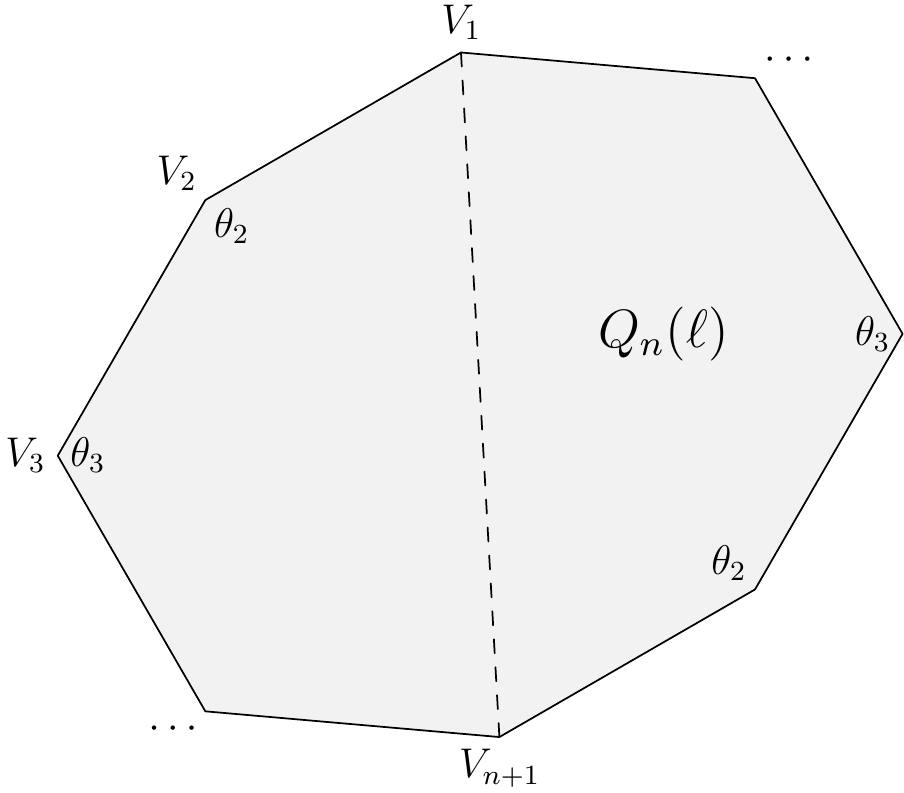}
    \caption{Construction of equilateral $2n$-gon with angles $\theta_1, \dotsc, \theta_n, \theta_1, \dotsc, \theta_n$.}
    \label{fig:my_label}
\end{figure}
\begin{proposition}\label{prop:evenequilateral}
Consider $\theta_1, \dots, \theta_{n}$ such that $0 < \theta_i \leq \pi$ and $\sum \theta_i < (n-1)\pi$. Then there is a convex equilateral $2n$-gon in $\H^2$ with angles $\theta_1, \dots, \theta_n, \theta_1, \dots, \theta_n$. 
\end{proposition}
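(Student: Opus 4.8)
The plan is to realize the polygon as a \emph{centrally symmetric} $2n$-gon. First I would take an equilateral geodesic arc with the prescribed interior angles, then glue to it its image under the $180^\circ$ rotation about the midpoint of the segment joining the arc's endpoints, and finally run an intermediate value argument in the common sidelength $\ell$ (with an induction on $n$ to nail down the limiting behaviour) to make the two ``new'' angles equal~$\theta_1$.

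Concretely, for each $\ell>0$ let $\mathcal A(\ell)$ be the equilateral geodesic arc $v_0v_1\cdots v_n$ with $n$ edges of length $\ell$ and interior angles $\theta_2,\dots,\theta_n$ at $v_1,\dots,v_{n-1}$; it exists and is unique up to isometry, and is convex since each exterior angle $\pi-\theta_{i+1}\ge 0$. Let $O(\ell)$ be the midpoint of $[v_0,v_n]$, let $\rho$ be the $180^\circ$ rotation about $O(\ell)$, and set $P(\ell)=\mathcal A(\ell)\cup\rho(\mathcal A(\ell))$. One checks directly that $P(\ell)$ is an equilateral $2n$-gon whose cyclic angle sequence is $f(\ell),\theta_2,\dots,\theta_n,f(\ell),\theta_2,\dots,\theta_n$, where $f(\ell)$ is the angle of $P(\ell)$ at $v_0$ (equivalently at $v_n=\rho(v_0)$): indeed the two edges of $P(\ell)$ at $v_0$ are the arc edge $v_0v_1$ and the $\rho$-image of the arc edge $v_{n-1}v_n$, which make the angles of the $(n+1)$-gon $D(\ell)$ bounded by $\mathcal A(\ell)$ and the chord $[v_0,v_n]$ at its two endpoints, on opposite sides of that chord. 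Gauss--Bonnet (\cref{prop:Gauss-Bonnet}) applied to $D(\ell)$ (angles $\theta_2,\dots,\theta_n$ together with the two endpoint angles summing to $f(\ell)$) then gives
\[
    f(\ell) = (n-1)\pi - \sum_{j=2}^{n}\theta_j - \operatorname{Area}(D(\ell)).
\]
So it suffices to find $\ell^\ast$ with $\operatorname{Area}(D(\ell^\ast)) = (n-1)\pi - \sum_{j=1}^n\theta_j=:A^\ast$, and the hypothesis $\sum\theta_j<(n-1)\pi$ says precisely that $0<A^\ast<(n-1)\pi-\sum_{j\ge 2}\theta_j$. At such an $\ell^\ast$ every angle of $P(\ell^\ast)$ is one of $\theta_1,\dots,\theta_n$, hence $\le\pi$, so $P(\ell^\ast)$ is convex, which finishes the proof.

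For the existence of $\ell^\ast$ I would use continuity: $\operatorname{Area}(D(\ell))$ is continuous in $\ell$; as $\ell\to 0^+$ the arc degenerates to a point and $\operatorname{Area}(D(\ell))\to 0<A^\ast$; and as $\ell\to\infty$ I claim $\operatorname{Area}(D(\ell))\to (n-1)\pi-\sum_{j\ge 2}\theta_j>A^\ast$, i.e.\ $f(\ell)\to 0$, after which the intermediate value theorem applies. This last claim is where the induction on $n$ enters. For $n=2$, $D(\ell)$ is the isosceles triangle with two sides $\ell$ and included angle $\theta_2$, so its other two angles $\to 0$ and its area $\to\pi-\theta_2$ as $\ell\to\infty$ (the base case, which also recovers the hyperbolic rhombus). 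For the inductive step, peel off the first edge: $D_n(\ell)=D_{n-1}(\ell)\cup T(\ell)$, where $D_{n-1}(\ell)$ is the analogous polygon built from the arc $v_1\cdots v_n$ with angles $\theta_3,\dots,\theta_n$ and $T(\ell)$ is the triangle $v_0v_1v_n$ glued along $[v_1,v_n]$. By the inductive hypothesis the endpoint angles of $D_{n-1}(\ell)$ tend to $0$ and $|v_1v_n|\to\infty$, so $T(\ell)$ has two sides tending to infinity with included angle tending to $\theta_2$; hence $\operatorname{Area}(T(\ell))\to\pi-\theta_2$, the endpoint angles of $D_n(\ell)$ tend to $0$, and $\operatorname{Area}(D_n(\ell))\to (n-1)\pi-\sum_{j\ge 2}\theta_j$.

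The hard part will be the bookkeeping that keeps this induction honest. One must verify that the arc $\mathcal A(\ell)$ stays embedded (so $D(\ell)$ is a genuine polygon to which Gauss--Bonnet applies), that $v_0$ lies on the far side of $[v_1,v_n]$ from $D_{n-1}(\ell)$ so that $T(\ell)$ is glued on the outside, that $|v_1v_n|\to\infty$ (a spreading estimate for convex geodesic arcs with long edges in $\H^2$), and that $P(\ell^\ast)$ is embedded as well as angle-convex; one must also handle the degenerate cases where some $\theta_i=\pi$, so that the corresponding vertex of $P(\ell)$ is straight and $P(\ell)$ is equivalent to a polygon with fewer sides. These are exactly the ``effects as $\ell$ approaches $0$ and $\infty$'' that the careful induction has to control.
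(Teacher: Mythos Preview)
Your outline matches the paper's approach closely: the half-polygon $D(\ell)$ is exactly the paper's $Q_n(\ell)$ (the arc $v_0\cdots v_n$ closed off by the chord $[v_0,v_n]$), your $f(\ell)$ is the paper's $m(V_1)+m(V_{n+1})$, and your induction on $n$ for the $\ell\to\infty$ behaviour is the content of the paper's \cref{lemma:embedded}. Doubling by the $180^\circ$ rotation about the chord midpoint is precisely the paper's ``adjoining two copies of the chain.''

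There is, however, a genuine gap at the \emph{other} end of your intermediate value argument. You assert that the arc is ``convex since each exterior angle $\pi-\theta_{i+1}\ge 0$'' and then send $\ell\to 0$ to force $\operatorname{Area}(D(\ell))\to 0$. But having all turns of one sign does \emph{not} make $\mathcal A(\ell)$, or $D(\ell)$, embedded: when the total turning $\sum_{j\ge 2}(\pi-\theta_j)$ exceeds $2\pi$, the equal-sidelength arc spirals. For instance with $n=5$ and $\theta_2=\cdots=\theta_5=\pi/5$, the Euclidean (small-$\ell$) limit of $\mathcal A(\ell)$ is a pentagram, which self-intersects five times; $D(\ell)$ is then not a simple polygon, Gauss--Bonnet and ``area'' are unavailable, and your $\ell\to 0$ endpoint is undefined. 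So this cannot be relegated to bookkeeping---you cannot ``verify that the arc stays embedded,'' because in general it does not.

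The paper supplies the missing idea in a separate lemma (\cref{lemma:gpi}): letting $L=\sup\{\ell:Q_n(\ell)\text{ is not embedded}\}$, if $L>0$ then for some $\ell_0>L$ the endpoint-angle sum already exceeds $\pi\ge\theta_1$. That furnishes the starting value for the IVT on $(\ell_0,\infty)$, entirely within the embedded range and without ever letting $\ell\to 0$. Your argument needs an analogous replacement for the small-$\ell$ endpoint.
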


First we need two lemmas.

\begin{lemma}\label{lemma:embedded}
Consider $\theta_2, \dots, \theta_n$ with $0 < \theta_i < \pi$. 
Let $\poly{Q}_n(\ell)$ denote the $(n+1)$-gon $V_1 \dots V_{n+1}$ of edge lengths $\ell$ with $m(V_i) = \theta_i$ for $i \in \{2, \dots, n\}$. Then 
\begin{enumerate}[label = {\textup{(\arabic*)}}]
\item for large $\ell$, $\poly{Q}_n(\ell)$ is embedded;
    
\item $m(V_1), m(V_{n + 1}) \to 0$ as $\ell \to \infty$;
    
\item $d(V_1, V_{n + 1}) \to \infty$ as $\ell \to \infty$. 
\end{enumerate}
\end{lemma}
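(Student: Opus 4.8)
The plan is to induct on $n$, proving the three conclusions simultaneously with the stronger assertion that for $\ell$ large $\poly{Q}_n(\ell)$ is an embedded \emph{convex} polygon, and carrying along the convex region $K_n$ that it bounds. First I would normalize the data: the angles $\theta_2,\dots,\theta_n$ form a fixed finite list inside $(0,\pi)$, so there is $\varepsilon>0$ with $\varepsilon\le\theta_i\le\pi-\varepsilon$, and every ``for $\ell$ large enough'' below may depend on $\varepsilon$ and $n$. I would also fix once and for all the sense (say counterclockwise) in which $\poly{Q}_n(\ell)$ turns at $V_2,\dots,V_n$, so that $\poly{Q}_n(\ell)$ is unambiguous; its boundary is then the consistently turning geodesic chain $V_1V_2\cdots V_{n+1}$ of $n$ edges of length $\ell$ (each exterior angle $\pi-\theta_i\in(0,\pi)$) closed up by the chord $V_{n+1}V_1$.

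For the base case $n=2$, $\poly{Q}_2(\ell)=\triangle V_1V_2V_3$ with $d(V_1,V_2)=d(V_2,V_3)=\ell$ and $m(V_2)=\theta_2$ is automatically embedded and convex. By the hyperbolic law of cosines (cf.\ \cref{LoC}), $\cosh d(V_1,V_3)=\cosh^2\ell-\sinh^2\ell\cos\theta_2\to\infty$ because $\cos\theta_2<1$, which is~(3); by the hyperbolic law of sines, $\sin m(V_1)=\sinh\ell\,\sin\theta_2/\sinh d(V_1,V_3)$, and since $\sinh\ell$ is of order $e^\ell$ while $\sinh d(V_1,V_3)$ is of order $e^{2\ell}$, we get $m(V_1)\to 0$, and symmetrically $m(V_3)\to 0$, which is~(2).

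For the inductive step, assume the claims for $\poly{Q}_{n-1}(\ell)=V_1\cdots V_n$, which bounds a convex region $K_{n-1}$ and has $m(V_1),m(V_n)\to0$ and $d(V_1,V_n)\to\infty$; form $\poly{Q}_n(\ell)$ by attaching the edge $V_nV_{n+1}$ of length $\ell$ at $V_n$ with interior angle $\theta_n$. By convexity of $\poly{Q}_{n-1}(\ell)$, the rays $V_nV_1$ and $V_nV_{n+1}$ both leave $V_n$ into the interior side of the line $V_{n-1}V_n$, at angles $m(V_n)$ and $\theta_n$ from the ray $V_nV_{n-1}$; since $m(V_n)\to0<\theta_n$, for $\ell$ large the ray $V_nV_1$ separates $V_{n-1}$ from $V_{n+1}$, so $\angle V_1V_nV_{n+1}=\theta_n-m(V_n)\to\theta_n\in(\varepsilon,\pi-\varepsilon)$ and $V_{n+1}$ lies strictly on the side of the line $V_1V_n$ opposite to $K_{n-1}$. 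Running the two trigonometric laws in the now nondegenerate triangle $\triangle V_1V_nV_{n+1}$ — whose sides at $V_n$ have lengths $d(V_1,V_n)\to\infty$ and $\ell\to\infty$ with included angle $\to\theta_n$ — the law of cosines gives $d(V_1,V_{n+1})\to\infty$ (conclusion~(3)), and the law of sines gives $m(V_{n+1})=\angle V_nV_{n+1}V_1\to0$ and $\angle V_nV_1V_{n+1}\to0$. Since $V_{n+1}$ lies strictly across the line $V_1V_n$ from $K_{n-1}$, the triangle $\triangle V_1V_nV_{n+1}$ meets $K_{n-1}$ only in their common edge $[V_1,V_n]$, so $K_n:=K_{n-1}\cup\triangle V_1V_nV_{n+1}$ is a gluing of two convex regions along a shared edge; the two angles so created are $\theta_n<\pi$ at $V_n$ and $\angle V_2V_1V_n+\angle V_nV_1V_{n+1}$ at $V_1$, and the latter is $m(V_1)+\angle V_nV_1V_{n+1}\to0<\pi$ (here $m(V_1)=\angle V_2V_1V_n$ is taken in $\poly{Q}_{n-1}(\ell)$). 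Hence $K_n$ is convex, its boundary $\poly{Q}_n(\ell)=V_1\cdots V_{n+1}$ is an embedded convex polygon (conclusion~(1)), and the interior angle of $\poly{Q}_n(\ell)$ at $V_1$ equals $\angle V_2V_1V_n+\angle V_nV_1V_{n+1}\to0$, completing~(2).

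The step I expect to be the real obstacle is conclusion~(1) — specifically, the assertion that the new vertex $V_{n+1}$ lands strictly \emph{outside} $K_{n-1}$ rather than on or inside $\poly{Q}_{n-1}(\ell)$: this is exactly what keeps the appended triangle from overlapping the old region and rules out new self-intersections, and it forces careful bookkeeping of the cyclic order of the rays $V_nV_{n-1},V_nV_1,V_nV_{n+1}$ at $V_n$ (and of $V_1V_2,\dots,V_1V_n,V_1V_{n+1}$ at $V_1$), together with the inequality $\theta_n>m(V_n)$, which holds only once the free-end angle of $\poly{Q}_{n-1}(\ell)$ has become small. This is also the place where the common-turning-sense convention is essential: for turns of mixed sense the chord can cross the chain and the statement fails as written.
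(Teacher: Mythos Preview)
Your proof is correct and follows essentially the same inductive skeleton as the paper: base case the isosceles triangle, inductive step attaching the new edge at $V_n$ and using that the free-end angle $m(V_n)$ of $\poly{Q}_{n-1}(\ell)$ is already small, with the trigonometric laws handling (2) and (3). Your explicit strengthening of the hypothesis to ``embedded \emph{and convex}'' is a genuine improvement over the paper's terse sketch: it is exactly what makes the embeddedness claim (1) transparent, since gluing a triangle to a convex region along a shared edge, with both glued angles below $\pi$, visibly yields a convex (hence embedded) region; the paper asserts (1) in one clause without this scaffolding.
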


\begin{proof}
For a proof by induction, first consider the case $n = 2$. For any $\ell$, the triangle $\poly{Q}_2(\ell)$ is trivially embedded since $\theta_2 < \pi$. The rest follows by induction and the hyperbolic Law of Cosines. 
For the induction step, for $\ell$ large, since by induction $\poly{Q}_{n - 1}(\ell)$ is embedded and $m(V_n)$ in $\poly{Q}_{n - 1}(\ell)$ is small, therefore $\poly{Q}_n(\ell)$ is embedded. Again the rest follows by the Law of Cosines.
\end{proof}

\begin{lemma}\label{lemma:gpi}
    Let $L$ be the supremum of $\ell$ such that $\poly{Q}_n(\ell)$ is not embedded. If $L > 0$, then for some $\ell_0 > L$, $m(V_1) + m(V_{n + 1}) > \pi$ in $\poly{Q}_n(\ell_0)$.
\end{lemma}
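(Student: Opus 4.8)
The plan is to combine the Gauss--Bonnet area formula for simple geodesic polygons with a continuity argument, reducing the whole statement to a property of the degenerate configuration $\poly{Q}_n(L)$.

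First I would note that $L<\infty$, since $\poly{Q}_n(\ell)$ is embedded for all large $\ell$ by \cref{lemma:embedded}(1), and that for every $\ell>L$ the polygon $\poly{Q}_n(\ell)$ is a simple geodesic $(n+1)$-gon, so \cref{prop:Gauss-Bonnet} gives
\[
  m(V_1)+m(V_{n+1}) \;=\; (n-1)\pi-\sum_{i=2}^{n}\theta_i-\operatorname{area}\bigl(\poly{Q}_n(\ell)\bigr).
\]
The left side tends to $0$ as $\ell\to\infty$ by \cref{lemma:embedded}(2), so the area tends to $(n-1)\pi-\sum_{i=2}^{n}\theta_i$. Both $m(V_1)+m(V_{n+1})$ and $\operatorname{area}(\poly{Q}_n(\ell))$ depend continuously on $\ell$ (the construction of $\poly{Q}_n(\ell)$ is continuous, and for $\ell\le L$ the polygon lies in a fixed compact region because every edge has length at most $L$), so they extend continuously to $\ell=L$. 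It therefore suffices to prove the strict inequality $\lim_{\ell\to L^{+}}\bigl(m(V_1)+m(V_{n+1})\bigr)>\pi$ --- equivalently $\operatorname{area}(\poly{Q}_n(L))<(n-2)\pi-\sum_{i=2}^{n}\theta_i$ --- for then $m(V_1)+m(V_{n+1})>\pi$ already holds for some $\ell_0$ slightly larger than $L$.

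Now I would analyse $\poly{Q}_n(L)$. It is a limit of simple geodesic polygons which is itself not embedded; since two distinct complete geodesics meet in at most one point and never tangentially, the failure of embeddedness must be that two vertices coincide or a vertex lies in the relative interior of a non-adjacent edge. (Also, $L>0$ forces $\sum_{i=2}^{n}(\pi-\theta_i)\ge 2\pi$: otherwise the locally convex arc $V_1\cdots V_{n+1}$ turns through less than $2\pi$, hence is simple, and together with its chord $V_{n+1}V_1$ bounds an embedded disk, making $\poly{Q}_n(\ell)$ embedded for all $\ell$. So when $L>0$ the degeneration is a self-contact of the spiralling arc.) The transparent case is $V_1=V_{n+1}$: then $\poly{Q}_n(L)$ is the equilateral $n$-gon $V_1\cdots V_n$ with some interior angle $\psi>0$ at $V_1$, so \cref{prop:Gauss-Bonnet} gives $\operatorname{area}(\poly{Q}_n(L))=(n-2)\pi-\sum_{i=2}^{n}\theta_i-\psi$, whence $m(V_1)+m(V_{n+1})\to\psi+\pi>\pi$. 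The mechanism is that collapsing the short closing edge ``cuts a corner,'' contributing a full straight angle $\pi$ to the sum of the two terminal angles.

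In the remaining cases a single contact point $q$ splits $\poly{Q}_n(L)$ into two (or more) simple sub-polygons, and the edge that runs straight through $q$ supplies an extra angle $\pi$ distributed among the pieces; summing the Gauss--Bonnet identities for the pieces should again yield $m(V_1)+m(V_{n+1})=\pi+(\text{a strictly positive combination of sub-polygon areas and angles})>\pi$. I expect this bookkeeping to be the main obstacle. One must enumerate the contact types, keep track of which piece contains each of $V_1$ and $V_{n+1}$ (they stay in a common piece unless $q$ lies on the closing edge $V_{n+1}V_1$), use the correct reflex-angle conventions when a piece lies ``inside'' a coil of the spiral, and check in every case that the surplus over $\pi$ is genuinely positive, ruling out the borderline configurations in which it might vanish.
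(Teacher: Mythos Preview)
Your approach is genuinely different from the paper's, and the part you flag as ``the main obstacle'' is exactly where the proof remains incomplete. You handle the degeneration $V_1=V_{n+1}$ cleanly, but for a general self-contact of $\poly{Q}_n(L)$ you only outline a program: split into simple sub-polygons, sum the Gauss--Bonnet identities, track reflex angles and which piece contains $V_1,V_{n+1}$, and verify the surplus over $\pi$ is strictly positive. None of that bookkeeping is actually done, and it is not routine---the contact could occur between a vertex and a non-adjacent edge far from the closing side, the sub-polygons need not both be convex, and one must argue that no borderline configuration gives exactly $\pi$. As written, the argument proves the lemma only in the special case $V_1=V_{n+1}$.

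The paper avoids this case analysis entirely by localising to the triangle $V_1V_nV_{n+1}$ rather than the whole polygon. It argues that $\poly{Q}_n(\ell)$ stays embedded as long as $V_{n+1}$ lies outside $\poly{Q}_{n-1}(\ell)$ and $V_1$ lies outside $V_2\cdots V_{n+1}$; hence as $\ell\searrow L$ one of these ``outside'' conditions fails, say $V_{n+1}$ approaches $\partial\poly{Q}_{n-1}(\ell)$. Just above $L$ the triangle $V_1V_nV_{n+1}$ is thin---small area and small angle at $V_n$---while the angle $\angle V_2V_1V_n$ (which is internal to $\poly{Q}_{n-1}$) stays bounded below. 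Gauss--Bonnet on that single triangle then gives
\[
  m(V_1)+m(V_{n+1})=\pi-\operatorname{Area}(V_1V_nV_{n+1})-m(\angle V_1V_nV_{n+1})+m(\angle V_2V_1V_n)>\pi.
\]
So the paper works at $\ell_0>L$ with a nondegenerate triangle, whereas you try to work at $\ell=L$ with a degenerate $(n+1)$-gon; the former sidesteps precisely the enumeration you left unfinished.
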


\begin{proof}
It follows from \cref{lemma:embedded} that for large enough $\ell > L$, $V_{n + 1}$ is outside $\poly{Q}_{n - 1}(\ell)$ since $m(\angle V_1 V_n V_{n - 1}) \to 0$ as $\ell \to \infty$, and symmetrically, $V_1$ is outside polygon $V_2 \dots V_{n + 1}$. $\poly{Q}_n(\ell)$ varies continuously with $\ell$, and as $\ell$ decreases, $\poly{Q}_n(\ell)$ is embedded as long as $V_{n + 1}$ is outside $\poly{Q}_{n - 1}(\ell)$ and $V_1$ is outside polygon $V_2 \dots V_{n + 1}$. Since $L > 0$, we may assume that for some $\ell_0 > L$, $V_{n + 1}$ is arbitrarily close to $\poly{Q}_{n - 1}(\ell)$, at which point 
\[\text{Area}(V_1 V_n V_{n + 1}) < \epsilon/2\]
and 
\[m(\angle V_1 V_n V_{n + 1}) < \epsilon /2,\]
with $0 < \epsilon < m(\angle V_2 V_1 V_n)$ on $\poly{Q}_{n}(\ell')$. Note that $\poly{Q}_n(\ell_0)$ is embedded since $\ell_0 > \ell \ge L$, and 
\[m(V_1) + m(V_{n + 1}) = \pi - \text{Area}(V_1 V_n V_{n + 1}) - m(\angle V_1 V_n V_{n + 1}) + m(\angle V_2 V_1 V_n) > \pi.\]
\end{proof}

\begin{proof}[Proof of \cref{prop:evenequilateral}]
Consider the polygonal chain $V_1 \cdots V_{n+1}$ where each edge is of length $\ell$ and each angle $V_i$ has measure $\theta_i$ for $2 \leq i \leq n$. 
By \cref{lemma:embedded}, the $(n+1)$-gon $\poly{Q}$ with vertices $V_1, \dotsc, V_{n+1}$ is embedded for sufficiently large $\ell$. 
Furthermore, $m(V_1) + m(V_{n+1})$ continuously approaches $0$ as large $\ell$ goes to infinity.

Let $L$ be the supremum of $\ell$ such that $\poly{Q}_n(\ell)$ is not embedded. Suppose $L > 0$. By \cref{lemma:gpi}, there exists an $\ell_0 > L$ such that $m(V_1) + m(V_{n + 1}) > \pi$. Since $m(V_1) + m(V_{n + 1}) \to 0$ as $\ell \to \infty$ and $\theta_1 < \pi$, there must exist an $\ell \in (\ell_0, \infty)$ such that $m(V_1) + m(V_{n + 1}) = \theta_1$ on $\poly{Q}_n(\ell)$. If $L = 0$, then $\poly{Q}_n(\ell)$ is embedded for every $\ell > 0$, so $m(V_1) + m(V_{n + 1})$ attains every value from $0$ to the Euclidean limit 
$(n - 1) \pi - \sum_{i = 2}^n \theta_i$. In either case, for some $\ell > 0$, $m(V_1) + m(V_{n + 1}) = \theta_1$ on $\poly{Q}_n(\ell)$. Therefore adjoining two copies of the chain $V_1\cdots V_{n+1}$ with length $\ell$ yields the desired $2n$-gon.
\end{proof}

\begin{proposition} \label{prop:n+2}
For even $n \geq 6$, there is a strictly convex equilateral $n$-gonal tile $\poly{Q}$ of\/ $\H^2$ of area $A$ for $ (n-2)\pi/2 < A < (n-2)\pi$.
\end{proposition}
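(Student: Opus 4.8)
The plan is to build $\poly{Q}$ as an equilateral $n$-gon whose angles come in an opposite-equal pattern $\theta,\dots,\theta,\theta,\dots,\theta$ (with $n/2$ copies of $\theta$ in each half), then to verify it satisfies the hypotheses of \cref{prop:margulis}, and finally to check that the achievable areas fill the claimed interval. Write $n=2m$ with $m\geq 3$. By \cref{prop:evenequilateral}, applied with $\theta_1=\dots=\theta_m=\theta$, there is a convex equilateral $2m$-gon with all angles equal to $\theta$ provided $0<\theta\leq\pi$ and $m\theta<(m-1)\pi$, i.e. $\theta<\frac{m-1}{m}\pi$. So for every $\theta\in\bigl(0,\tfrac{m-1}{m}\pi\bigr)$ we obtain a convex equilateral $n$-gon $\poly{Q}(\theta)$; by Gauss-Bonnet (\cref{prop:Gauss-Bonnet}) its area is $A(\theta)=(n-2)\pi-n\theta$, which ranges continuously over the open interval $\bigl((n-2)\pi - n\cdot\tfrac{n-2}{n}\pi,\ (n-2)\pi\bigr)=\bigl((n-2)\pi/2,\ (n-2)\pi\bigr)$ as $\theta$ runs through $\bigl(0,\tfrac{m-1}{m}\pi\bigr)$. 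That already matches the target area range exactly, so the only remaining issue is tileability and strict convexity.

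Next I would verify \cref{prop:margulis}. Its hypotheses require a convex equilateral $n$-gon, $n\geq 4$, with all angles at most $\pi/2$, such that any three angles (with repetition) can be complemented by more to sum to $2\pi$. Here all angles equal $\theta$, so the condition ``all angles $\leq\pi/2$'' forces $\theta\leq\pi/2$; and the complementation condition becomes: for every $j\in\{1,2,3\}$ there is an integer $k\geq j$ (in fact we want $k\geq 3$ to cover $j=3$, but more simply we just need \emph{some} multiple $k\theta=2\pi$ with $k\geq 3$) — actually the cleanest sufficient condition is to choose $\theta=2\pi/k$ for an integer $k\geq 4$, since then three copies of $\theta$ are complemented by $k-3\geq 1$ more copies to total $2\pi$. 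The constraint $\theta=2\pi/k\leq\pi/2$ means $k\geq 4$, and $\theta<\tfrac{m-1}{m}\pi$ means $2\pi/k<\tfrac{m-1}{m}\pi$, i.e. $k>\tfrac{2m}{m-1}$, which holds for all $k\geq 4$ once $m\geq 3$. So for each integer $k\geq 4$, $\poly{Q}(2\pi/k)$ is a convex equilateral $n$-gonal tile of $\H^2$ of area $(n-2)\pi - 2n\pi/k$.

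The main obstacle is that $\theta=2\pi/k$ only realizes a \emph{discrete} set of areas, whereas the proposition claims \emph{every} area in $\bigl((n-2)\pi/2,(n-2)\pi\bigr)$. To fill the gaps I would not insist on all angles equal: instead, fix a base value $\theta_0=2\pi/k$ for suitable $k$ and perturb the $m$ angles of one half (and hence of both halves, to keep the opposite-equal pattern) to values $\theta_1,\dots,\theta_m$ that are close to $\theta_0$, all at most $\pi/2$, still satisfying $\sum\theta_i<(m-1)\pi$ so that \cref{prop:evenequilateral} gives an equilateral $2m$-gon, and chosen so that every $\theta_i$ still lies in a positive integer combination summing to $2\pi$ with each coefficient $\geq 3$ — this can be arranged, for instance, by taking each $\theta_i$ of the form $2\pi/k_i$ with $k_i\geq 4$, or more flexibly by taking the $\theta_i$ to satisfy a common relation $\sum c_i\theta_i = 2\pi$ and invoking the complementation hypothesis directly. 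Since the total area $(n-2)\pi - \sum_{i}2\theta_i$ then varies continuously as the $\theta_i$ vary over a suitable open set, one checks the image covers the full interval; strict convexity is automatic because each $\theta_i<\pi$, and \cref{prop:margulis} again yields that $\poly{Q}$ tiles. I expect the bookkeeping for the perturbation — showing the set of attainable angle-tuples is large enough that the area map is surjective onto the interval while every attainable tuple satisfies the \cref{prop:margulis} hypothesis — to be the part requiring the most care, and I would organize it by first handling the dense set of ``rational'' tilings and then a continuity/connectedness argument, mirroring the structure used in the companion result \cref{prop:reg-decreasing} where a degenerate extra vertex interpolates between cases.
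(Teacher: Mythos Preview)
Your overall strategy---build an equilateral even-gon via \cref{prop:evenequilateral} and then apply \cref{prop:margulis}---is exactly the paper's. You also correctly observe that the regular choice $\theta=2\pi/k$ realizes only a discrete set of areas and that something more is needed.

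The gap is in the ``something more.'' Your first concrete suggestion, taking each $\theta_i=2\pi/k_i$ for integers $k_i$, still yields only countably many area values, so it cannot fill the interval. Your second suggestion, imposing a single relation $\sum c_i\theta_i=2\pi$, is the right instinct but is not carried out: \cref{prop:margulis} requires that \emph{every} triple of angles (with repetition) be completable to $2\pi$, which is a stronger combinatorial condition than one linear relation, and you have not specified coefficients or checked this. Finally, the closing ``continuity/connectedness'' idea does not rescue the situation, because the Margulis--Mozes hypothesis is a discrete condition on the angle set and does not pass to limits; a limit of tiling angle-tuples need not satisfy the complementation hypothesis. So as written the argument does not produce a tile for a single area outside the discrete regular family.

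The paper closes this gap with a specific two-angle construction: take exactly two angles equal to $\theta_1$ and the remaining $n-2$ equal to $\theta$, linked by $(n-2)(\theta_1+\theta)=2\pi/m$ for a carefully chosen integer $m$ depending on $A$ (with one exceptional non-integral $m$ and the regular hexagon of area $3\pi$ handled separately). That single relation, with both coefficients at least $3$, guarantees the full Margulis--Mozes complementation condition; and the explicit formulas for $\theta_1,\theta$ in terms of $A$ and $m$ show directly that every $A$ in the interval is hit, with both angles in $(0,\pi/2)$ so that strict convexity and the angle bound in \cref{prop:margulis} hold. The essential idea you are missing is committing to a fixed small number of angle values tied by a relation whose coefficients are large enough for Margulis--Mozes, while leaving one continuous degree of freedom to sweep the area.
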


\begin{proof}
Note first what turns out to be one exceptional case: the regular $6$-gon with $\pi/6$ angles tiles by \cref{prop:regularpoly}. It has area $3\pi$.

Let $\sigma = (n-2) - A/\pi$. By the hypothesis on $A$, $0 < \sigma < (n-2)/2.$ If $\sigma < 2(n-4)/(n-2)$, there is an integer $m$ such that
\begin{equation} \label{eq:m-bound}
\frac{4}{(n-2)\sigma}< m < \frac{2}{\sigma}
\end{equation}
because the length of the interval is greater than $1$. Otherwise let $m=4/(n-2)$. Note that
\[ m = \frac{4}{n-2} > \frac{4}{(n-2)\sigma} \] and
\[ m = \frac{4}{n-2} < \frac{2}{\sigma}, \]
so $m$ satisfies the same inequalities \cref{eq:m-bound} in this case. The sharp inequality in the lower bound holds because $\sigma \ge 2(n-4)/(n-2) \ge 1$, with equality only for the already handled case $n=6,A=3\pi.$

Let $\theta_1 = (\pi/m(n-4))(2 - m\sigma)$. Note that $0 < \theta_1 < 2\pi/m(n-2)$ by \cref{eq:m-bound} (and $\theta_1 < \pi/2).$ Finally, let $\theta$ be such that 
\begin{equation}
    (n - 2)(\theta_1+ \theta) = 2\pi /m. \label{eq:angle-sum}
\end{equation}
Note that $0<\theta<\pi/2$. By \cref{prop:evenequilateral}, there exists an equilateral $n$-gon $\poly{Q}$ with two angles of measure $\theta_1$ and the rest of measure $\theta$. Since the angles are all less than $\pi/2$, $\poly{Q}$ is strictly convex. By \cref{prop:Gauss-Bonnet}, \cref{eq:angle-sum}, the definition of $\theta_1$, and the definition of $\sigma$, 
\begin{align*}
\text{Area}(\poly{Q}) &= ( n -2)\pi - (2\theta_1 + (n-2)\theta) \\
& =(n-2)\pi - (2\pi/m-(n-4)\theta_1) \\
&=(n-2)\pi - \pi\sigma \\
&= A.
\end{align*}
If $m$ is integral, by \cref{eq:angle-sum} and \cref{prop:margulis}, $\poly{Q}$ tiles. In the case $m = 4/(n-2)$,
$$4(\theta_1+\theta) = 2\pi,$$
and again by \cref{prop:margulis}, $\poly{Q}$ tiles.
\end{proof}

\begin{remark}
For $n=6$, as the area approaches $3\pi$ from below, $\theta$ approaches 0 and $\theta_1$ approaches $\pi/2$, and as the area approaches $3\pi$ from above, $\theta_1$ approaches 0 and $\theta$ approaches $\pi/4$. Fortunately, this exceptional case is covered by a regular $6$-gon.
\end{remark}

\begin{corollary}\label{cor:degen}
For even $n \geq 4, k \geq 2$, there is a (degenerate) equilateral $kn$-gonal tile $\poly{Q}$ of\/ $\H^2$ of area $A$ for any $ (n-2)\pi/2 < A < (n-2)\pi.$ 

\end{corollary}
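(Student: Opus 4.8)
The plan is to reduce to the equilateral tiles already in hand and then subdivide edges, exactly as in the passage from \cref{triapi} to \cref{degentri}. Fix an even integer $n \geq 4$, an integer $k \geq 2$, and a target area $A$ with $(n-2)\pi/2 < A < (n-2)\pi$.

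First I would produce an equilateral $n$-gonal tile $\poly{Q}_0$ of $\H^2$ of area exactly $A$. For $n \geq 6$ this is precisely \cref{prop:n+2}, whose tile is strictly convex, equilateral, and of area $A$ throughout the required open interval. The only case left out there is $n = 4$, where the range becomes $\pi < A < 2\pi$; since this is contained in $(0, 2\pi)$, \cref{rhombus} supplies a rhombic — hence equilateral and $4$-sided — tile of area $A$. In either case let $\ell$ denote the common side length of $\poly{Q}_0$.

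Next I would subdivide. On each of the $n$ geodesic edges of $\poly{Q}_0$ I insert $k - 1$ new vertices, spaced at equal arclength $\ell/k$, each carrying the straight angle $\pi$. The resulting polygon $\poly{Q}$ has $n + n(k-1) = kn$ edges, all of length $\ell/k$, so $\poly{Q}$ is equilateral; since the underlying region is unchanged, $\poly{Q}$ has area $A$; and $\poly{Q}$ is a \emph{degenerate} $kn$-gon exactly because of the inserted angles of measure $\pi$. Finally, a tiling of $\H^2$ by $\poly{Q}_0$ becomes, after relabeling each tile with these extra vertices, a covering of $\H^2$ by congruent copies of $\poly{Q}$ with pairwise disjoint interiors, so $\poly{Q}$ tiles $\H^2$.

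There is no substantive obstacle here; the only points that need a word of care are the split into the cases $n = 4$ (handled by \cref{rhombus}) and $n \geq 6$ (handled by \cref{prop:n+2}), and the observation that the $k-1$ new vertices must be distributed evenly over \emph{all} $n$ sides — rather than placed on a single side as in \cref{degentri} — so that $\poly{Q}$ remains equilateral.
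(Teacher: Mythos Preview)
Your proposal is correct and follows essentially the same approach as the paper: obtain an equilateral $n$-gonal tile from \cref{rhombus} (for $n=4$) or \cref{prop:n+2} (for even $n\geq 6$), then add $k-1$ equally-spaced vertices to each edge. The paper's proof is a one-line version of exactly this argument.
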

\begin{proof}
   Add $k-1$ equally-spaced vertices to each edge of the $n$-gonal tile guaranteed by \cref{rhombus} and \cref{prop:n+2}.
\end{proof}
\begin{corollary}\label{cor:2nto4n}
For any $n \equiv 2 \pmod 4$  at least $6$ and any $k \ge 1,$ there is a nondegenerate equilateral $kn$-gonal tile $\poly{Q}$ of\/ $\H^2$ of area $A$ for any $(n-2)\pi/2 < A < (n-2)\pi.$
\end{corollary}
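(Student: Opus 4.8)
The case $k=1$ is exactly \cref{prop:n+2}, since such an $n$ is even and at least $6$, so the plan is to reduce $k\ge 2$ to that proposition by a refinement. Note first that the restriction to small areas is genuine: by \cref{prop:Gauss-Bonnet} a $kn$-gon all of whose angles are at most $\pi/2$ has area at least $(kn-2)\pi-kn\cdot\pi/2=(kn/2-2)\pi\ge(n-2)\pi$ for $k\ge 2$, so the tile we seek cannot be one to which \cref{prop:margulis} applies, and must instead be produced by modifying an already-constructed tiling. Accordingly, I would start from the strictly convex equilateral $n$-gonal tile $\poly{Q}_0$ of area $A$ guaranteed by \cref{prop:n+2} and fix one of its tilings $\mathcal T_0$ of $\H^2$, writing $\ell_0$ for the common edge length; recall $\poly{Q}_0$ has two angles $\theta_1$ and $n-2$ angles $\theta$, all less than $\pi/2$. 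Placing $k-1$ equally spaced vertices on each edge already gives the \emph{degenerate} $kn$-gonal tile of \cref{cor:degen}; the work is to push those vertices off the geodesics so that they become genuine (angle $\ne\pi$) vertices, while keeping the tile equilateral, the area equal to $A$, and the configuration a polygonal tiling.

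Concretely, I would fix a ``corrugation profile'': for the length $\ell_0$ and every small $\varepsilon>0$, a polygonal chain $\beta$ of $k$ geodesic segments of a common length $\ell=\ell(\varepsilon)$, with $k-1$ interior vertices of angle $\ne\pi$, joining two points at distance $\ell_0$, contained in the $\varepsilon$-tube about the connecting geodesic, and enclosing zero net signed area with that geodesic (for $k\ge 3$ make it weave across the geodesic with cancelling lobes; for $k=2$ one instead uses a one-sided bump and compensates at the level of each tile). Replacing every edge $e$ of $\mathcal T_0$ by a congruent copy $\beta_e$ of $\beta$ turns each face into an equilateral $kn$-gon $\poly{Q}$: it has the $n$ old vertices, whose angles are perturbed by $O(\varepsilon)$ and so stay bounded away from $\pi$, together with $(k-1)n$ new vertices of angle $\ne\pi$, and all of its $kn$ edges have length $\ell$. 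For $\varepsilon$ small the tubes are pairwise disjoint away from the vertices of $\mathcal T_0$, so the new edges are embedded, each face is a disk, and at each old vertex the two sub-edges of $\beta_e$ are displaced in cancelling directions, keeping the angle sum there equal to $2\pi$; hence the corrugated configuration is again a polygonal tiling. The zero-net-area condition makes replacing $e$ by $\beta_e$ change the area of each of the two adjacent faces by $0$, so every $\poly{Q}$ has area $A$.

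The substantive point is monohedrality: all the $kn$-gons $\poly{Q}$ in the corrugated tiling must be genuinely congruent, which requires the placement of $\beta_e$ on each edge to be consistent with the way $\mathcal T_0$ identifies its tiles. Since the only nontrivial symmetry of $\poly{Q}_0$ is the half-turn interchanging its two $\theta_1$-vertices, the combinatorics of how a single corrugation profile can be laid down coherently around every tile — and, when $k$ is even, how the one-sided bumps can be oriented globally so that the $\pm$ area contributions cancel tile by tile — imposes a parity constraint, and $n\equiv 2\pmod 4$ (equivalently, $n/2$ odd) is precisely the case in which it can be met. This bookkeeping, together with the explicit check that corrugations on edges meeting at a common vertex of $\mathcal T_0$ do not collide, is the hard part; everything else is routine once $\varepsilon$ is taken sufficiently small.
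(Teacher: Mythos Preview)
Your overall strategy---start from the strictly convex equilateral $n$-gonal tile of \cref{prop:n+2}, fix a tiling, and replace each edge by a short equilateral $k$-chain---is exactly the paper's. But the execution you sketch diverges from the paper's, and the step you explicitly label ``the hard part'' is precisely where the paper supplies a concrete idea you are missing.

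The paper does \emph{not} use a zero-net-area corrugation for any $k$. It uses a single one-sided dent profile for every $k\ge 2$, placed on the $n$ edges of the prototile in strictly alternating fashion (outdent, indent, outdent, \dots); since $n$ is even this closes up, and since the indent and outdent are congruent the area is preserved. The role of $n\equiv 2\pmod 4$ then becomes concrete and local to the prototile: because $n/2$ is odd and the dents alternate, the two edges of $\poly{Q}_0$ identified by its half-turn symmetry carry \emph{opposite} dent types. This is what makes the corrugated polygon $\poly{Q}$ compatible with the way copies of $\poly{Q}_0$ meet along an edge.

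The global consistency you defer is handled by an observation you do not mention: the $1$-skeleton of the $n$-gonal tiling has no odd cycles (every cycle bounds a union of even-gons, and the interior edges pair off), so its vertex set is $2$-colorable as $C\sqcup C'$. The paper orients the dents clockwise around each vertex of $C$ and counterclockwise around each vertex of $C'$; combined with the alternation and the $n/2$-odd condition, this forces every face of the corrugated graph to be congruent to $\poly{Q}$. That bipartite coloring is the entire ``bookkeeping'' you postpone.

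Your zero-net-area weaving for $k\ge 3$ is a genuinely different tactic. If the profile is chosen centrally symmetric it may well yield monohedrality without any parity hypothesis at all---which would overshoot the statement but would also mean your explanation of why $n\equiv 2\pmod 4$ is ``precisely'' the needed condition is wrong for those $k$. In any case you have not carried out the consistency check, and for $k=2$ you fall back to one-sided bumps, which is the paper's method and needs exactly the bipartite argument above.
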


\begin{proof}
Consider the tiling by the equilateral $n$-gon of \cref{prop:n+2} with angles \[\theta_1, \theta_2, \ldots, \theta_{n/2}, \theta_1, \theta_2, \ldots, \theta_{n/2}\] and desired area. The case $k=1$ is already done, so assume $k \ge 2.$
    Let $\poly{Q}$ be the $kn$-gon constructed by deforming the edges of the $n$-gon: add, in alternating fashion, an indent or an outdent to the edges of the $n$-gon, which evidently preserves area. The indents and outdents are congruent equilateral polygonal chains of $k$ edges, and can be made arbitrarily small to guarantee that $\poly{Q}$ does not intersect itself. We claim that $\poly{Q}$ tiles. Note that, as $n/2$ is odd, the edges between angles $\theta_i,\theta_{i+1}$ and $\theta_{i+n/2},\theta_{i+1+n/2}$ are dented differently: one has an outdent, and the other an indent.
    
    Consider the graph for the $n$-gonal tiling; we use it to generate an analogous tiling for $\poly{Q}$. There are no odd cycles, because a cycle bounds a collection of even-gons and the unused (interior) edges are paired up. Hence the graph is bipartite, consisting of two sets $C$ and $C'$.
    
   For a vertex of $C$, arrange all the dents clockwise about the vertex; for a vertex of $C'$, arrange them counter-clockwise. Since the dentings alternate, every face of this new graph is congruent to $\poly{Q}$.
\end{proof}

\begin{remark} 
Consider $12$-gons for example. \cref{prop:n+2} provides equilateral $12$-gonal tiles from the largest possible area  $10\pi$ down to $5\pi$ (excluding the endpoints). There are regular 12-gonal tiles for areas of the form $(10-24/k)\pi$ (\cref{prop:Gauss-Bonnet} and \cref{prop:regularpoly}), including for example $2\pi$ and $4\pi$. Adding triangular dents to the edges of equilateral $6$-gonal tiles as in \cref{cor:2nto4n} yields equilateral $12$-gonal tiles for areas from $4\pi$ down to $2\pi$. Evenly placing two vertices as in \cref{cor:degen} on each of the edges of a rhombic tile (\cref{rhombus}) yields (degenerate) tiles for areas from $2\pi$ down to $0$. The only missing cases are areas in the interval $(4\pi,5\pi]$. 
\emph{Non-equilateral} tiles are provided for all possible areas by \cref{degentri}. 
\end{remark}

\begin{proposition}\label{curvi-convex-tri}
    An isoperimetric curvilinear triangular tile of the hyperbolic plane must be convex.
\end{proposition}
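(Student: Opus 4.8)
The plan is to argue by contradiction, combining the first variation of perimeter with a maximum principle for the ``pressures'' of a minimizing tiling. Suppose the isoperimetric curvilinear triangular tile $\poly{T}$, of area $A$, is not convex, and fix a perimeter‑minimizing tiling $\mathcal{T}$ of $\H^2$ by congruent copies of $\poly{T}$. First I would invoke the standard regularity of such minimizers. Varying a single edge $e=\partial T_i\cap\partial T_j$ of $\mathcal{T}$ by a normal field $\phi$ with $\int_e\phi\,ds=0$ leaves $\mathrm{Area}(T_i)$ and $\mathrm{Area}(T_j)$ unchanged to first order and changes total edge length to first order by $\int_e\kappa\,\phi\,ds$, where $\kappa$ is the geodesic curvature of $e$; minimality forces this to vanish for every such $\phi$, so $\kappa$ is constant along each edge. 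The Lagrange condition for the area constraints then yields a real number $\lambda(T)$ for each tile $T$ (its ``pressure'') with $\kappa_e=\lambda(T_i)-\lambda(T_j)$, the sign convention being that a positive value means $e$ curves toward the interior of $T_i$. Hence an edge of a tile $T$ curves into $T$ exactly when $\lambda(T)$ exceeds the pressure of the tile across it, and is a geodesic exactly when the two pressures agree.

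The heart of the matter is the following. Let $\poly{T}_0$ be a tile at which $\lambda$ is at least as large as the pressures of each of its three neighbors. Then every edge of $\poly{T}_0$ is either a geodesic or curves toward the interior of $\poly{T}_0$. Moreover, in a perimeter‑minimizing tiling the vertices have degree three with the incident edges meeting pairwise at $120^\circ$ (a degree‑$\ge 4$ vertex can be split to shorten the network, and at a degree‑three vertex the three $120^\circ$ wedges are distributed one to a tile), so each interior angle of $\poly{T}_0$ equals $120^\circ<\pi$. A curvilinear triangle all of whose edges are geodesics or curve inward and all of whose angles are less than $\pi$ is geodesically convex; hence $\poly{T}_0$ is convex, and, $\mathcal{T}$ being monohedral, so is $\poly{T}\cong\poly{T}_0$ — contradicting the assumption. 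It therefore remains only to produce a tile at which $\lambda$ attains a local maximum.

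This last step is the main obstacle. The increments of $\lambda$ across edges are bounded — they take only the finitely many values $\pm\kappa_1,\pm\kappa_2,\pm\kappa_3$ given by the three edge curvatures of $\poly{T}$ — but on the infinite tiling $\mathcal{T}$ the function $\lambda$ could a priori increase without bound, so no local maximum need exist. I would try to exclude this from minimality: a chain of tiles along which $\lambda$ strictly increases describes an ``over‑pressurized'' region whose controlled relaxation ought to strictly decrease total edge length while preserving all areas; alternatively, one may restrict to tilings invariant under a cocompact group of symmetries, for which $\lambda$ is automatically bounded. Making either approach rigorous is delicate and is entangled with the compactness and ``no bumping'' phenomena the paper flags as open in the Existence remark; a complete argument also owes a careful justification of the degree‑three, $120^\circ$ structure of the minimizing network (and hence of the exclusion of reflex vertices of $\poly{T}$), and this is where I expect the real work to lie.
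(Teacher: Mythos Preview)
Your approach has a genuine gap, and you correctly identify it yourself: in $\H^2$ there is no reason for the pressure function $\lambda$ to attain even a local maximum over the tiles, and neither of your suggested repairs is complete. Restricting to cocompact tilings changes the statement; the ``relax an over-pressurized chain'' idea is not an argument but a hope, and the paper's own remark immediately after this proposition points out that exactly this kind of discrepancy can be ``pushed off to infinity'' in $\H^2$. Your argument also rests on regularity claims---degree-three vertices with $120^\circ$ angles, constant-curvature edges, existence of an honest minimizer---that you flag but do not prove; the paper does not establish these either (indeed its Existence remark leaves related questions open), so you cannot simply cite them.

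The paper's proof is entirely different and much more elementary: no pressures, no Lagrange multipliers, no vertex regularity. It classifies the three edges of the prototile by sign---whether each encloses more or less area than its geodesic chord. If all three are balanced, straightening them preserves area and strictly shortens perimeter, contradicting optimality; if one bulges out and one in, the same contradiction arises. Hence the split is necessarily two of one sign and one of the other. Since a shared edge bulges out of one tile exactly when it bulges into its neighbor, ``out'' edges can only be matched against ``in'' edges. With a $2$--$1$ split, a short combinatorial trace of edge types around a vertex forces the outside edges to all carry the same sign, which then fails at a subsequent vertex. This parity argument is local and needs no maximum principle and no compactness, which is precisely why it succeeds in $\H^2$ where your approach stalls.
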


\begin{proof}
Assume that there is a non-convex isoperimetric curvilinear triangular 
tile. If every edge contains the same area as a geodesic, replacing the
edges with geodesics maintains area and reduces perimeter, contradiction.
In the case that one contains more and another contains less, a similar contradiction is obtained. Hence either two edges contain more and one contains less, or two contain less and one contains
more. Then around a vertex of the tiling one type must match up against
the other type, so that all outside edges are of the same type, which 
leads to a contradiction around an outside vertex.
\end{proof}

\begin{remark}
\cref{curvi-convex-tri} is easier in closed hyperbolic surfaces, because the number of edges bulging out must equal the number bulging in, while in $\H^2$ such a discrepancy might be pushed off to infinity. Even in closed surfaces an extension to higher curvilinear $k$-gons remains conjectural, because straightening one edge of a tile might cause it to intersect another part of the tile. 
\end{remark}

\section{Monohedral Tilings of Closed Hyperbolic Surfaces} 
\label{sect:monohedral}
In 2005 \textcite{cox2005, cox2011} and subsequently \textcite{sesum} proposed generalizing Hales's hexagonal isoperimetric inequality to prove that a regular $k$-gons $\poly{R}_k$ ($k \geq 7$) with $120^\circ$ angles provides a least-perimeter tiling of an appropriate closed hyperbolic surface for given area. \textcite{carroll06} showed that the proposed polygonal isoperimetric 
inequality fails for $k > 66$. 
Our \cref{prop:heptagon-best} proves the result for $R_7$ among monohedral tilings by a polygon of at most $10$ sides.
Although \cref{prop:heptagon-best} applies even if the regular polygon does not tile, \cref{thm:existence-of-tilings} notes that there are many closed hyperbolic surfaces which it does tile. 
It is possible for many-sided polygons to tile, but \cref{cor:3} shows that
as $n$ increases, $n$-gonal tiles necessarily have many concave angles.
\cref{cor:heptagon-beats-convex} deduces that the regular polygon has less perimeter than any other \emph{convex} polygonal tile.

\begin{remark}
By Gauss-Bonnet, the regular $k$-gon $\poly{R}_k$ of area $A_k = (k-6)\pi/3$ ($k \geq 7$) has interior angles of $2\pi/3$ (\cref{sect:hyperbolic-geometry}). It therefore tiles $\H^2$. It also tiles many closed hyperbolic surfaces (\cref{thm:existence-of-tilings}). Every such $\poly{R}_k$ is thought to be isoperimetric. However, for area not a multiple of $\pi/3$, there is no conjectured isoperimetric tile.

\end{remark}

\begin{proposition}\label{thm:existence-of-tilings}
For $k\geq 7$, there exist infinitely many closed hyperbolic surfaces tiled by the regular $k$-gon of area $(k-6)\pi/3$ and angles $2\pi/3$.
\end{proposition}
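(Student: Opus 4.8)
The plan is to identify $\poly{R}_k$ with the fundamental tile of the regular $\{k,3\}$ tessellation of the hyperbolic plane and then pass to torsion‑free subgroups of finite index in its symmetry group. Since $\poly{R}_k$ has interior angle $2\pi/3$ (\cref{prop:Gauss-Bonnet}) and $2\pi/3$ divides $2\pi$, \cref{prop:regularpoly} produces a tiling $\mathcal{T}$ of $\H^2$ by copies of $\poly{R}_k$ with exactly three tiles around every vertex. Let $G \le \operatorname{Isom}(\H^2)$ be the symmetry group of $\mathcal{T}$. Bisecting $\poly{R}_k$ into $2k$ right triangles as in the proof of \cref{prop:perimeter-n-theta} exhibits a $(2,3,k)$ triangle, with angles $\pi/2,\pi/3,\pi/k$, as a fundamental domain for $G$; hence $G$ is a cocompact Fuchsian group with $\operatorname{Area}(\H^2/G) = \pi/6 - \pi/k > 0$ for $k \ge 7$. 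Moreover $G$ acts transitively on the tiles of $\mathcal{T}$, and the stabilizer of each tile is the finite dihedral group of order $2k$.

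Next I would invoke the existence of a torsion‑free subgroup $\Gamma_0 \le G$ of finite index; this is Selberg's lemma, or, in exactly this tessellation setting, a result of Edmonds, Ewing, and Kulkarni \cite{edmonds,edmonds-tess}. To get \emph{infinitely many} surfaces I then note that $\Gamma_0$, being the fundamental group of a closed hyperbolic surface, has finite‑index subgroups of arbitrarily large index (for instance via a surjection onto $\Z$), and every such subgroup is again torsion‑free and of finite index in $G$. This yields torsion‑free finite‑index subgroups $\Gamma \le G$ with $[G:\Gamma]$ arbitrarily large.

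For each such $\Gamma$, set $S_\Gamma = \H^2/\Gamma$, a closed hyperbolic surface. I claim $\mathcal{T}$ descends to a monohedral polygonal tiling of $S_\Gamma$ by $\poly{R}_k$. The key point is that every nontrivial element of $\Gamma$ has infinite order whereas every tile‑stabilizer in $G$ is finite, so $\Gamma$ acts \emph{freely on the set of tiles} of $\mathcal{T}$; since distinct tiles of $\mathcal{T}$ meet only along edges and vertices, the covering $\H^2 \to S_\Gamma$ is injective on the interior of each tile. Thus the tiles project to embedded disks that tile $S_\Gamma$, all congruent to $\poly{R}_k$ and bounded by geodesics — a monohedral polygonal tiling in the sense of our definition, in which a tile is permitted to be edge‑to‑edge with itself. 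Finally $\operatorname{Area}(S_\Gamma) = [G:\Gamma]\,(\pi/6 - \pi/k)$ takes infinitely many values, so by Gauss–Bonnet the Euler characteristics $\chi(S_\Gamma) = -\operatorname{Area}(S_\Gamma)/2\pi$ do too; since $\chi$ determines a closed surface up to orientability, the $S_\Gamma$ fall into infinitely many homeomorphism types.

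The only genuinely delicate step is the verification that the descended object really is a \emph{polygonal} tiling in our technical sense, with embedded topological‑disk faces; this is exactly where torsion‑freeness of $\Gamma$ is used, since it promotes freeness of the action on points to freeness of the action on tiles. Everything else is standard bookkeeping with cocompact Fuchsian groups and triangle groups.
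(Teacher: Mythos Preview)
Your proof is correct and follows essentially the same approach as the paper: both identify the symmetry group of the $\{k,3\}$ tessellation with the $(2,3,k)$ triangle group, invoke Edmonds--Ewing--Kulkarni (or Selberg) to obtain torsion-free subgroups of arbitrarily large finite index, and pass to the quotient surfaces. Your version is more detailed---in particular you verify carefully that the tiling descends to an honest polygonal tiling and that infinitely many homeomorphism types arise---whereas the paper's proof is essentially a citation, but the strategy is the same.
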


\begin{proof}
These surfaces are provided by work of \textcite[Main Thm.]{edmonds-tess} on torsion-free subgroups of Fuchsian groups and tessellations (see also \cite{edmonds-bull, edmonds}). Their work yields torsion-free subgroups $S$ of arbitrarily large finite index of the triangle group $(2, 3, k).$ This triangle group is the orientation-preserving symmetry group of the hyperbolic triangle of angles $\pi/2, \pi/3,$ and $\pi/k$. Each quotient of $\H^2$ by such a subgroup $S$ is a closed hyperbolic surface tiled by these triangles, which can be joined in groups of $2k$ to form a tiling by the regular $k$-gon of area $(k-6)\pi/3$ and hence angles $2\pi/3$ (by Gauss-Bonnet).
	\end{proof}

\begin{example}
    The Klein Quartic Curve in $\C P^2$ is the set of complex solutions to the homogeneous equation \cite{klein-original}
    \[u^3v + v^3w + w^3u = 0.\]
    The curve is a hyperbolic 3-holed torus. It is famously tiled by 24 regular heptagons.
\end{example}
The following results are instrumental in eliminating competing $n$-gons of large~$n$. 
\begin{lemma}
\label{lemma:vertexdegrees}
Consider a tiling of a closed hyperbolic surface by curvilinear polygons $\poly{Q}_i$ of average area $A_k=(k-6)\pi/3$.
Then each polygon has on average at most $k$ vertices of degree at least $3$, with equality if and only if every vertex has degree two or three.
\end{lemma}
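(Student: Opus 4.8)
The plan is to combine Gauss--Bonnet on the closed surface with Euler's formula and an elementary degree count. Let $F$, $E$, and $V$ denote the numbers of faces, edges, and vertices of the tiling; these form a CW structure on the closed hyperbolic surface $M$, since by definition every face is an open disk. Because the tiles have average area $A_k$, their areas sum to $FA_k = F(k-6)\pi/3$, and this total is exactly $\operatorname{Area}(M)$. By Gauss--Bonnet on $M$ (the curvilinearity of the edges plays no role here), $\operatorname{Area}(M) = -2\pi\chi(M)$, so $\chi(M) = -F(k-6)/6$. Euler's formula $V - E + F = \chi(M)$ then rearranges to
\[
    E - V \;=\; F + \frac{F(k-6)}{6} \;=\; \frac{Fk}{6}.
\]

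Next I would run the degree count. Every vertex has degree at least $2$ by the definition of a tiling. Let $V_{\geq 3}$ be the number of vertices of degree at least $3$, and put $D = \sum_{\deg v \geq 3} \deg v$. Since a vertex of degree $d$ is incident to exactly $d$ polygon corners (counting with multiplicity a tile whose boundary passes through the vertex more than once), $D$ is precisely the number of (tile, corner)-incidences at vertices of degree $\geq 3$, that is, $D = \sum_i (\text{number of vertices of }\poly{Q}_i\text{ of degree} \geq 3)$; so the assertion to prove is $D \leq kF$. From $2E = \sum_v \deg v = 2(V - V_{\geq 3}) + D$ we get $D = 2(E - V) + 2V_{\geq 3}$, and since each of the $V_{\geq 3}$ counted vertices contributes at least $3$ to $D$, also $V_{\geq 3} \leq D/3$. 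Combining these with the identity above,
\[
    D \;\leq\; 2(E - V) + \tfrac{2}{3}D, \qquad\text{hence}\qquad \tfrac{1}{3}D \;\leq\; 2(E - V) \;=\; \tfrac{Fk}{3},
\]
so $D \leq kF$, which is an average of at most $k$ vertices of degree $\geq 3$ per tile.

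For the equality statement, note that the only inequality in the chain is $V_{\geq 3} \leq D/3$, which is an equality exactly when every vertex of degree $\geq 3$ has degree exactly $3$; since every vertex already has degree at least $2$, this is the same as saying every vertex has degree $2$ or $3$. Thus $D = kF$ if and only if every vertex has degree two or three, as claimed. I expect the entire argument to be routine bookkeeping; the only point that needs care is the identity $D = \sum_i (\text{number of degree-}{\geq}3\text{ vertices of }\poly{Q}_i)$, which hinges on the fact that a degree-$d$ vertex meets exactly $d$ polygon corners --- a point worth stating explicitly because a tile can be edge-to-edge with itself, as in the one-vertex octagonal tiling of a genus-two surface mentioned earlier, so a single surface vertex may be counted several times among the vertices of one polygon.
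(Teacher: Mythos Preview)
Your proof is correct and follows essentially the same approach as the paper: both combine Gauss--Bonnet (to relate total area to $\chi(M)$), Euler's formula, and the elementary degree estimate that a vertex of degree at least $3$ is shared among at least three tile-corners. The paper packages the count as per-tile fractional contributions to $F-E+V$ (each tile contributing at most $1-v/6$), whereas you run the equivalent global handshake identity $2E=\sum_v\deg v$; the two computations unwind to the same inequality $D\le kF$ with the same equality case.
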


\begin{proof}
A tile with $n$ edges and $v$ vertices of degree at least $3$ contributes to the tiling $1$ face, $n/2$ edges, and at most $(n-v)/2 + v/3$ vertices, with equality precisely if no vertices have degree greater than $3$. Therefore its contribution to the Euler characteristic $F-E+V$ is at most $1-v/6$. The Gauss-Bonnet theorem says that 
\[ \int G=2\pi(F-E+V). \]
Hence the average contributions per tile satisfy
\[
    -A_k = -(k-6)\pi/3 \leq 2\pi(1-\overline{v}/6).
\]
Therefore $\overline{v} \leq k$, with equality if and only if no vertices have degree more than~$3$.
\end{proof}

\begin{proposition}
	\label{cor:3}
	Let $\poly{Q}$ be an $n$-gon of
	area $A_k=(k-6)\pi/3$
	with $\ell_1$ (interior) 
	angles of measure $\pi$ and $\ell_2$ of measure greater than $\pi$. 
	If  $\poly{Q}$ tiles $M$, then $\ell_1 + 2\ell_2 \ge n-k$. 
	Equality holds for a tiling (and therefore every tiling) if and only if every vertex is of degree two or three, and every concave angle has degree two.
\end{proposition}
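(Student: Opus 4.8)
The plan is to play \cref{lemma:vertexdegrees} off against a purely local analysis of the vertices of degree $2$. Fix a tiling of $M$ by copies of $\poly{Q}$, let $F$ be the number of faces, and note that the faces contribute $nF$ corner-incidences at the vertices of $M$ altogether. Let $V_2$ denote the number of vertices of $M$ of degree exactly $2$. Since a tiling has no vertex of degree $0$ or $1$, a corner of a face fails to sit at a degree-$2$ vertex exactly when it sits at a vertex of degree at least $3$; summing over all faces, the number of such corners is $nF-2V_2$, so \cref{lemma:vertexdegrees} gives $nF-2V_2\le kF$; equivalently,
\[2V_2\ge (n-k)F,\]
with equality exactly when every vertex of $M$ has degree $2$ or $3$.

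Next I would bound $V_2$ from above. At a vertex of degree $2$ precisely two corners meet, and their measures sum to $2\pi$; hence either both measure $\pi$, or one measures strictly less than $\pi$ and the other strictly more than $\pi$. In either case the number of incident corners of measure at least $\pi$, plus the number of incident corners of measure greater than $\pi$, is exactly $2$ ($2+0$ in the first case, $1+1$ in the second). Summing this count over all degree-$2$ vertices gives $2V_2$, while on the other hand it cannot exceed the total number of corners of measure at least $\pi$ in the whole tiling plus the total number of those of measure greater than $\pi$, namely $(\ell_1+\ell_2)F+\ell_2 F$. Therefore
\[2V_2\le (\ell_1+2\ell_2)F,\]
with equality exactly when every corner of measure at least $\pi$ --- that is, every concave angle --- lies at a degree-$2$ vertex.

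Chaining the two displays gives $(n-k)F\le 2V_2\le(\ell_1+2\ell_2)F$, hence $\ell_1+2\ell_2\ge n-k$; since the extremes of the chain are determined by $\poly{Q}$ alone, equality in one tiling forces it in every tiling. Equality holds if and only if both displays are equalities, that is, if and only if every vertex of $M$ has degree $2$ or $3$ and every concave angle lies at a degree-$2$ vertex, which is exactly the asserted characterization. Granting \cref{lemma:vertexdegrees}, I expect the only care needed to be combinatorial bookkeeping: checking the incidence count $nF$ (and the identity $2E=nF$ that \cref{lemma:vertexdegrees} rests on) when a face is edge-to-edge with itself, so that a single tiling edge, or a single corner, may be incident to the same face twice; and phrasing the ``charge'' at a degree-$2$ vertex so that corners of measure exactly $\pi$ are counted correctly alongside the strictly concave ones. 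Everything else is immediate.
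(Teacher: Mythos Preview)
Your argument is correct and follows the same approach as the paper: bound the number of degree-$2$ corner-incidences from below via \cref{lemma:vertexdegrees} and from above by the local observation that each degree-$2$ vertex consumes either two $\pi$-angles or one strictly concave angle. The paper's proof compresses all of this into two sentences; yours is simply the fully unpacked version with the $2V_2$ double count made explicit.
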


\begin{proof}
    Take any tiling of $M$ by $\poly{Q}$. Each vertex of degree two in the tiling has either two angles of measure $\pi$ or exactly one angle of measure greater 
	than $\pi$.
    By Lemma~\ref{lemma:vertexdegrees},
        $$\ell_1 + 2\ell_2 \ge n-k,$$
    with equality precisely when every vertex has degree two or three, and every 
	concave angle has degree 2.
\end{proof}

\begin{corollary}\label{cor:heptagon-beats-convex}
    The regular $k$-gon $\poly{R}_k$ has less perimeter than any non-equivalent convex polygonal tile of area $A_k = (k-6)\pi/3$.
\end{corollary}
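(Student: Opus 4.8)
The plan is to reduce the whole statement to \cref{prop:smolhull}, which already asserts that an $n$-gon of area $A_k$ whose convex hull has at most $k$ vertices has perimeter at least $P_k$, with equality only if it is equivalent to $\poly{R}_k$. Thus the only thing to establish is that a \emph{convex} polygonal tile of area $A_k$ has a convex hull with at most $k$ vertices; the ``non-equivalent'' hypothesis will then upgrade the resulting inequality to a strict one.

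First I would dispose of the easy range. If the convex tile $\poly{Q}$ has $n \le k$ sides, then $\hull(\poly{Q}) = \poly{Q}$ has at most $n \le k$ vertices, and \cref{prop:smolhull} applies verbatim.

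For $n > k$ the key observation is that a convex polygon has no interior angle exceeding $\pi$, so in the notation of \cref{cor:3} we have $\ell_2 = 0$; that proposition then gives $\ell_1 \ge n - k$, i.e.\ $\poly{Q}$ has at least $n-k$ interior angles of measure exactly $\pi$. Since $\poly{Q}$ is convex it is its own convex hull, and erasing those straight vertices leaves an equivalent polygon with at most $n - (n-k) = k$ essential vertices. Hence $\hull(\poly{Q})$ has at most $k$ vertices, and \cref{prop:smolhull} again applies. In either case we conclude $P(\poly{Q}) \ge P_k$ with equality only if $\poly{Q} \sim \hull(\poly{Q}) = \poly{R}_k$; as $\poly{Q}$ is assumed non-equivalent to $\poly{R}_k$, the inequality is strict.

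I do not expect a genuine obstacle here, since all the content sits in the already-proven \cref{prop:smolhull,cor:3}. The only point requiring care is the equality analysis: one must check that ``non-equivalent'' is precisely the hypothesis excluding the equality case of \cref{prop:smolhull}, and that the distinction in \cref{cor:3} between straight angles ($=\pi$, counted by $\ell_1$) and genuinely concave angles ($>\pi$, counted by $\ell_2$) is used correctly — convexity forces $\ell_2 = 0$, which is exactly what lets $\ell_1 \ge n-k$ bound the number of hull vertices.
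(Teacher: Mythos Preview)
Your proof is correct and follows essentially the same route as the paper: use \cref{cor:3} with $\ell_2=0$ (convexity) to force at least $n-k$ angles of measure $\pi$, reduce to a polygon with at most $k$ sides, and invoke the isoperimetric property of the regular $k$-gon. The paper cites \cref{prop:3-k} directly rather than routing through \cref{prop:smolhull}, and it does not bother separating the case $n\le k$ (since $\ell_1\ge n-k$ is vacuous there), but the substance is identical.
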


\begin{proof}
    Let $\poly{Q}$ be a convex $n$-gonal tile of area $A_k$. By \cref{cor:3}, $\poly{Q}$ contains at least $n-k$ angles of measure $\pi$. Hence $\poly{Q}$ is equivalent to a polygon with at most $k$ sides. Unless $\poly{Q}$ is equivalent to $\poly{R}_k$, $\poly{Q}$ has strictly more perimeter by \cref{prop:3-k}.
\end{proof}

\section{Octagonal Tiles} \label{sec:octagonal-tiles}

Our original approach to proving the regular 7-gon $\poly{R}_7$ with $120^\circ$ angles isoperimetric, having eliminated $n$-gonal competitors for $n \leq 7$, next took up $8$-gons. \cref{cor:hept-beats-octa} proves that the regular heptagon of area $\pi/3$ has less perimeter than any octagonal tile of the same area. Since strictly convex tiles of this area
do not exist for $n \geq 8$, we turn our attention to octagonal
tiles which are not strictly convex.

\begin{proposition}\label{prop:octagons}
    The regular heptagon of area $\pi/3$ has less perimeter than any non-equivalent
    non-strictly-convex octagon of the same area.
\end{proposition}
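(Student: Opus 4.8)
The plan is to bypass any detailed case analysis of the octagon and reduce at once to \cref{prop:smolhull}. Let $\poly{Q}$ be the octagon in question; its area is $\pi/3 = A_7$, so the relevant parameter is $k = 7$ and the comparison tile is $\poly{R}_7$. The only geometric input needed is that a non-strictly-convex octagon has at least one interior angle of measure at least $\pi$ — either a straight angle (measure exactly $\pi$) or a genuine reflex angle (measure strictly greater than $\pi$) — and that such a vertex is never a vertex of the convex hull $\hull(\poly{Q})$.

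First I would justify that hull claim. For a straight angle it is immediate: the two incident edges are collinear, so the ``vertex'' is an interior point of an edge of $\poly{Q}$ and is discarded when $\hull(\poly{Q})$ is taken with the minimal number of vertices. For a reflex vertex $v$, the interior of $\poly{Q}$ occupies an angular sector of measure greater than $\pi$ at $v$, so a full line through $v$ lies locally inside $\poly{Q}$; hence $v$ is not an extreme point of $\poly{Q}$ and cannot be a vertex of $\hull(\poly{Q})$. Since $\hull(\poly{Q})$ is the convex hull of the eight vertices of $\poly{Q}$ and omits at least one of them, it has at most seven vertices.

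Then I would apply \cref{prop:smolhull} with $k = 7$: because $\hull(\poly{Q})$ has at most $7$ vertices, $P(\poly{Q}) \ge P_7 = P(\poly{R}_7)$, with equality only if $\poly{Q} \sim \hull(\poly{Q}) = \poly{R}_7$. The standing hypothesis that $\poly{Q}$ is non-equivalent to $\poly{R}_7$ excludes the equality case, giving $P(\poly{Q}) > P_7$. Equivalently, one may quote \cref{cor:n-7concave} directly with $n = 8$ and $k = 7$, since $\poly{Q}$ then has at least $n - k = 1$ angle of measure at least $\pi$.

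I expect no real obstacle: the entire content is the elementary hull observation above, which is exactly the mechanism already behind \cref{cor:heptagon-beats-convex}, and no computation is required. The genuine difficulties appear only in the later sections, where for $9$- and $10$-gonal tiles one or several concave angles must be ``flattened'' without creating self-intersecting shapes; for octagons a single reflex or straight angle already drops the hull to seven sides, so nothing more is needed.
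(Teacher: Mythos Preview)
Your proposal is correct and is essentially identical to the paper's proof, which simply cites \cref{cor:n-7concave} with $n=8$, $k=7$. Your additional paragraph justifying why a straight or reflex vertex is not a vertex of the convex hull is a reasonable elaboration of what the paper leaves implicit.
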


\begin{proof}
    The proposition is immediate from \cref{cor:n-7concave}.
\end{proof}

\begin{corollary}\label{cor:hept-beats-octa}
    Let $M$ be a closed hyperbolic surface that is tiled by the regular 
    heptagon $\poly{R}_7$ of area $\pi/3$. Then $\poly{R}_7$ has less
    perimeter than any non-equivalent octagonal tile of the same area.
\end{corollary}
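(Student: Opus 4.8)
The plan is to combine \cref{prop:octagons} with the observation, already promised in the surrounding text, that an octagonal tile of area $\pi/3$ of a closed hyperbolic surface can never be strictly convex. First I would take an octagonal tile $\poly{Q}$ of $M$ of area $\pi/3 = A_7$ and assume it is non-equivalent to $\poly{R}_7$; the goal is then $P(\poly{Q}) > P(\poly{R}_7) = P_7$.

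Next I would invoke \cref{cor:3} with $n = 8$ and $k = 7$. Since $\poly{Q}$ tiles $M$, writing $\ell_1$ for the number of interior angles of $\poly{Q}$ of measure exactly $\pi$ and $\ell_2$ for the number of measure greater than $\pi$, the proposition gives $\ell_1 + 2\ell_2 \geq n - k = 1$. In particular $\poly{Q}$ has at least one angle of measure at least $\pi$, so $\poly{Q}$ is not strictly convex. Now \cref{prop:octagons} applies verbatim: $\poly{R}_7$ has strictly less perimeter than any non-equivalent non-strictly-convex octagon of area $\pi/3$, and $\poly{Q}$ is exactly such an octagon, so $P(\poly{Q}) > P_7$.

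There is essentially no obstacle here — the corollary is a bookkeeping consequence of results already in hand. The only point deserving care is the dichotomy hidden inside the inequality $\ell_1 + 2\ell_2 \geq 1$: if $\ell_2 \geq 1$ there is a genuinely concave angle, whereas if $\ell_2 = 0$ then $\ell_1 \geq 1$ and $\poly{Q}$ merely has a straight angle, making it equivalent to a polygon with at most seven sides. Both cases funnel through the same mechanism, namely that the convex hull $\hull(\poly{Q})$ has at most seven vertices, so \cref{prop:smolhull} (equivalently \cref{cor:n-7concave}, as used in the proof of \cref{prop:octagons}) yields $P(\poly{Q}) \geq P_7$ with equality only when $\poly{Q} \sim \hull(\poly{Q}) = \poly{R}_7$, which the non-equivalence hypothesis rules out. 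Hence $P(\poly{Q}) > P_7$, completing the argument.
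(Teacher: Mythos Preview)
Your proof is correct and follows essentially the same route as the paper: invoke \cref{cor:3} with $n=8$, $k=7$ to produce an angle of measure at least $\pi$, then apply \cref{prop:octagons}. The extra unpacking of the $\ell_1$/$\ell_2$ dichotomy via \cref{prop:smolhull} is fine but unnecessary, since \cref{prop:octagons} already absorbs that case analysis.
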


\begin{proof}
    By \cref{cor:3}, the octagon contains an angle of measure at least $\pi$. The corollary follows from \cref{prop:octagons}.
\end{proof}

\section{Nonagonal Tiles} \label{sec:nonagonal-tiles}

Proving that the regular heptagon has less perimeter than any $9$-gonal tile
(\cref{prop:7-beats-9}) is more difficult than the octagonal case because we must fully consider what 
happens when the tile
has strictly concave angles. 
\cref{cor:flattening-comp-vertices} first proves that ``flattening''
degree-two concave angles and their corresponding convex angles reduces 
perimeter while preserving area.

\begin{definition}[Flattening]
Consider a polygonal chain $A_1A_2 \dots A_n$ in $\textbf{H}^2$. To flatten adjacent vertices $A_2 \dots A_{n-1}$, replace $A_1A_2, \dots, A_n$ with the geodesic $A_1A_n$. In a hyperbolic surface, flattening is done in the cover $\H^2$. Let $m(A)$ for a vertex of a polygon denote the measure of the interior angle of vertex $A$.
\end{definition}

\begin{lemma}\label{lem:complementary-vertices}
    Let $M$ be a surface which admits a monohedral tiling by a polygon $\poly{Q}$.
    Suppose that $\poly{Q}$ has a degree-$2$ vertex $v$ with measure $m(v)$. 
    Then $\poly{Q}$ also has a vertex $w$ of measure $2\pi - m(v)$.
    If $\poly{Q}$ has no angles of measure $\pi$, then $v$ and $w$ are distinct vertices.
    Furthermore, the incident edges of $v$ are equal in length to the incident edges of $w$.
\end{lemma}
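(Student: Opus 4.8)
The plan is to read off the conclusion from the local structure of a monohedral tiling $\mathcal{T}$ of $M$ by $\poly{Q}$ at the given degree-$2$ vertex. First I would fix a copy $\poly{Q}_1$ of $\poly{Q}$ in $\mathcal{T}$ whose vertex $v$ occupies a point $x \in M$ of degree $2$ in the tiling graph, so that exactly two edge-ends, lying on (geodesic) edges $\epsilon_1$ and $\epsilon_2$, meet at $x$. Since every face of $\mathcal{T}$ is an open disk bounded by geodesics, a small geodesic disk about $x$ is cut by $\epsilon_1$ and $\epsilon_2$ into exactly two circular sectors, each of which is the interior angle of some face at one of its vertices. One sector is the angle of $\poly{Q}_1$ at $v$, of measure $m(v)$; I would name $w$ the vertex, of measure $m(w)$, at which the copy of $\poly{Q}$ filling the other sector sits, noting that this copy need not differ from $\poly{Q}_1$ and that $w$ is in any case a vertex of the prototile $\poly{Q}$.

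Next I would invoke the fact that the total angle about any point of the Riemannian surface $M$ equals $2\pi$. The two sectors at $x$ partition this total angle, so $m(v) + m(w) = 2\pi$, i.e.\ $m(w) = 2\pi - m(v)$; since the copy in question is congruent to $\poly{Q}$, this exhibits the asserted vertex $w$ of $\poly{Q}$. For distinctness, if $\poly{Q}$ has no angle of measure $\pi$ then $m(v) \neq \pi$, hence $m(w) = 2\pi - m(v) \neq m(v)$, and the two vertices, having different measures, must be distinct. For the edge lengths, I would observe that $\epsilon_1$ and $\epsilon_2$ are simultaneously the two edges of $\poly{Q}_1$ incident to $v$ and the two edges incident to $w$ in its copy; hence the edges incident to $v$ and those incident to $w$ are the very same edges of $\mathcal{T}$, so in particular have equal lengths.

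The only step that takes genuine care — the main obstacle — is the claim that a degree-$2$ vertex of a polygonal tiling lies in exactly two angular sectors, each a genuine polygon corner, whose angles sum to $2\pi$. This should follow directly from the definition of a polygonal tiling (faces are open topological disks, every edge a geodesic) together with the elementary fact that a small metric disk around a point of a Riemannian surface has total angle $2\pi$; in carrying it out I would take care to dispose, without disrupting the argument, of the degenerate configurations in which $\epsilon_1 = \epsilon_2$ is a loop at $x$, or in which both sectors belong to the same copy of $\poly{Q}$.
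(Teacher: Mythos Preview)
Your proposal is correct and follows essentially the same approach as the paper: look at the degree-$2$ vertex, observe that the two incident angular sectors sum to $2\pi$, deduce the complementary vertex $w$ on the other copy of $\poly{Q}$, and note that the two edges at $x$ are shared and hence of matching length. If anything, you are slightly more careful than the paper in flagging the degenerate self-adjacency possibilities, but the argument is the same.
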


\begin{proof}
    In the tiling, vertex $v$ on $\poly{Q}$ has measure $m(v)$, and since it is degree-2 it
    is shared by exactly one other copy of $\poly{Q}$ in the tiling; on this other 
    tile, $v$ has measure $2\pi - m(v)$. Since the tiling is monohedral, all 
    tiles are congruent, and so there must exist a vertex of measure 
    $2\pi - m(v)$ on $\poly{Q}$ has well.
    If $\poly{Q}$ contains no angles of measure $\pi$, then it is not possible
    that $m(v) = 2\pi - m(v)$, and so $v$ and $w$ must be distinct vertices.
    Since tilings are edge-to-edge, it must be the case that the edges
    incident to $v$ coincide with edges incident to $w$, and so they are 
    equal in length.
\end{proof}

\begin{corollary} \label{cor:cong-triangles}
    Let $B,B'$ be distinct complementary vertices on a monohedral tile $\poly{Q}$.
    Let $A,C$ be the vertices adjacent to $B$ and let $A', C'$ be those adjacent to $B'$.
    Then $ABC$ is congruent to $A'B'C'$.
\end{corollary}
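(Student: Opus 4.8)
The plan is to reduce this directly to the hyperbolic side-angle-side congruence criterion, with \cref{lem:complementary-vertices} supplying all the needed data. First I would invoke \cref{lem:complementary-vertices}: since $B$ and $B'$ are distinct complementary vertices of the monohedral tile $\poly{Q}$, the lemma gives $m(B') = 2\pi - m(B)$ and tells us that the (unordered) pair of edge lengths at $B$ equals the pair of edge lengths at $B'$. Relabelling $A'$ and $C'$ if necessary, I may therefore assume $|BA| = |B'A'|$ and $|BC| = |B'C'|$.

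The one step that needs a moment's care is the angle comparison, because $B$ (or $B'$) may be concave in $\poly{Q}$. The triangle $ABC$ is the geodesic triangle through the three points $A,B,C$, so its interior angle at $B$ is the \emph{non-reflex} angle $\angle ABC$: this equals $m(B)$ when $B$ is a convex vertex of $\poly{Q}$ and $2\pi - m(B)$ when $B$ is concave, i.e.\ in all cases it equals $\min\{m(B),\,2\pi-m(B)\}$. Applying the same reasoning at $B'$ gives a triangle angle of $\min\{m(B'),\,2\pi-m(B')\} = \min\{2\pi-m(B),\,m(B)\}$, which is the identical quantity since $\{m(B),2\pi-m(B)\}$ and $\{m(B'),2\pi-m(B')\}$ are the same two-element set. (In the degenerate case $m(B)=\pi$ both triangles collapse to segments of the common length $|BA|+|BC|$ and are congruent trivially.)

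Having matched two pairs of side lengths together with the equal included angle at $B$ and $B'$, I would conclude by the hyperbolic SAS theorem that $ABC \cong A'B'C'$. I do not expect any genuine obstacle here; the only point to watch is the convention that $\angle ABC$ denotes the non-reflex angle, which is precisely what makes a convex vertex and its concave complementary partner contribute the same triangle angle.
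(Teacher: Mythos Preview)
Your proof is correct and follows the same route as the paper, which simply states that the corollary ``follows immediately from \cref{lem:complementary-vertices}.'' You have merely unpacked that one line into the explicit SAS argument, and your extra care in noting that the triangle angle at a concave vertex is the non-reflex angle $2\pi - m(B)$ (so that the convex/concave complementary pair yield equal included angles) is a detail the paper leaves implicit.
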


\begin{proof}
    This follows immediately from \cref{lem:complementary-vertices}.
\end{proof}

\begin{corollary} \label{cor:cong-subtriangles}
    Let $B,B'$ be distinct but adjacent complementary vertices 
    on a monohedral tile $\poly{Q}$. Let $A$ be the other vertex adjacent to 
    $B$ and $C$ be the other vertex adjacent to $B'$.
    Let $D$ be the intersection of the segments $BB'$ and $AC$.
    Then $\triangle ABD$ is congruent to $\triangle DB'C$.
\end{corollary}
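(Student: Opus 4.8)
The plan is to pin down the point $D$ explicitly and realize the claimed congruence by an isometry of $\H^2$ — the half-turn about the midpoint of $BB'$ — with a pedestrian AAS comparison of $\triangle ABD$ and $\triangle DB'C$ as a fallback.

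First I would unpack the input. Since $B$ and $B'$ are distinct they are not both straight angles, so after the evident relabelling (swapping $B\leftrightarrow B'$ and $A\leftrightarrow C$, under which the assertion is symmetric) I may assume $m(B)<\pi<m(B')=2\pi-m(B)$; that is, $B$ is convex and $B'$ is reflex. Applying \cref{cor:cong-triangles} to the adjacent complementary pair $B,B'$ — whose neighbours are $A,B'$ and $B,C$ respectively — gives $\triangle ABB'\cong\triangle CB'B$, and in particular $|AB|=|CB'|$ together with $\angle ABB'=\angle CB'B$, where $\angle CB'B$ denotes the non-reflex angle between the rays $B'B$ and $B'C$; this angle equals $2\pi-m(B')=m(B)=\angle ABB'$.

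Now let $D$ be the midpoint of the geodesic segment $BB'$ and let $\rho$ be the rotation of $\H^2$ by $\pi$ about $D$, so $\rho$ interchanges $B$ and $B'$. I claim $\rho(A)=C$: the point $\rho(A)$ lies at distance $|AB|=|CB'|$ from $B'=\rho(B)$, the ray from $B'$ through $\rho(A)$ makes angle $\angle ABB'=m(B)$ with the ray $B'B$ (as $\rho$ preserves angles and distances), and $\rho(A)$ lies on the side of the line $BB'$ opposite to $A$; since $C$ shares these same three properties, $\rho(A)=C$. Consequently $D$ is also the midpoint of $AC$, so $D$ is exactly the intersection of the segments $BB'$ and $AC$ named in the statement (the two lines are distinct, so they meet only at $D$), and $\rho$ carries $\triangle ABD$ onto $\triangle CB'D$, which is the asserted congruence, with $A\leftrightarrow C$, $B\leftrightarrow B'$, and $D$ fixed. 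Alternatively, once $D$ is known to lie strictly between $B$ and $B'$, the rays $BD$ and $B'D$ are the rays toward $B'$ and $B$, so $\angle ABD=m(B)=\angle DB'C$; combined with the vertical angles $\angle ADB=\angle CDB'$ and the equal opposite sides $AB=CB'$, AAS finishes it.

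The crux — and the only place where real work is needed — is the claim that $A$ and $C$ lie on opposite sides of the line $BB'$, equivalently that the segments $BB'$ and $AC$ genuinely cross at an interior point. I would argue this from embeddedness of $\poly{Q}$: along the edge $BB'$ the interior of $\poly{Q}$ lies in a single one of the two half-planes bounded by the line $BB'$. The convex interior angle $m(B)<\pi$ at $B$ opens into that interior half-plane, so the geodesic ray $BA$ leaves $B$ into the open interior half-plane; since two distinct geodesics meet at most once and this ray already meets the line $BB'$ at $B$, all of it — in particular $A$ — stays in the open interior half-plane. The reflex angle $m(B')>\pi$ at $B'$ opens past the line, so by the same reasoning $C$ lies in the open exterior half-plane. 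Hence $A$ and $C$ are separated by the line $BB'$, as needed. (When $m(B)=\pi$ the triangles degenerate and there is nothing to prove.)
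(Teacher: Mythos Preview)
Your proof is correct and substantially more thorough than the paper's own, which reads in its entirety ``This follows immediately from \cref{lem:complementary-vertices}.'' You have supplied the geometry the paper leaves implicit: the half-turn about the midpoint of $BB'$ is an elegant way to exhibit the congruence and simultaneously identify $D$ as the common midpoint, and your discussion of why $A$ and $C$ lie on opposite sides of the line $BB'$ (so that the segments genuinely cross) is exactly the point that needs checking and that the paper omits.

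One cosmetic remark: invoking \cref{cor:cong-triangles} here is slightly awkward, since in the adjacent case the neighbours of $B$ are $A,B'$ and those of $B'$ are $B,C$, so the labelling in that corollary does not line up cleanly with the correspondence you want. It is tidier to go straight to \cref{lem:complementary-vertices}: the incident edges at $B$ are $AB,BB'$ and at $B'$ are $B'B,B'C$, and since $BB'$ is common the multiset equality forces $|AB|=|B'C|$ directly. This changes nothing in the argument.
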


\begin{proof}
    This follows immediately from \cref{lem:complementary-vertices}.
\end{proof}

\begin{corollary} \label{cor:flattening-comp-vertices}
    Flattening distinct complementary vertices $B,B'$ of a tile $\poly{Q}$ 
    does not change the area of $\poly{Q}$.
\end{corollary}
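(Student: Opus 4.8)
The plan is to reduce the claim to Corollary~\ref{cor:cong-subtriangles}. Let $B, B'$ be the distinct complementary vertices, with $A$ the other vertex adjacent to $B$ and $C$ the other vertex adjacent to $B'$; flattening $B, B'$ means replacing the chain $A B B' C$ with the single geodesic $AC$. First I would observe that $\poly{Q}$ decomposes, near the chain $A B B' C$, into the rest of the tile together with the quadrilateral region $A B B' C$ (the part cut off by the segment $AC$). Flattening $B,B'$ replaces this quadrilateral region with the (possibly empty or degenerate) region bounded by $A C$, so the change in area is exactly the signed area of the quadrilateral $A B B' C$, where the sign accounts for which side of $AC$ the vertices $B, B'$ lie on.

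Next I would invoke Corollary~\ref{cor:cong-subtriangles}: with $D$ the intersection of segments $BB'$ and $AC$, the triangles $\triangle ABD$ and $\triangle DB'C$ are congruent. The key point is that $D$ lies on segment $AC$, so the diagonal $BB'$ of the quadrilateral $ABB'C$ crosses the diagonal $AC$: one of $B, B'$ lies on each side of the line $AC$. Hence the quadrilateral $ABB'C$ is a ``crossed'' (self-intersecting) quadrilateral whose two lobes are precisely $\triangle ABD$ and $\triangle DB'C$, and these carry opposite orientations relative to $AC$. Because the two triangles are congruent they have equal area, and because they sit on opposite sides of $AC$ their signed contributions cancel. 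Therefore the net signed area enclosed between the chain $A B B' C$ and the geodesic $AC$ is zero, so flattening does not change the area of $\poly{Q}$.

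I expect the main obstacle to be bookkeeping about orientation and position: verifying that $D$ genuinely lies strictly between $A$ and $C$ on the segment $AC$ (so that the cancellation is exact rather than merely approximate), and handling the degenerate sub-case where $B$ or $B'$ already lies on the line $AC$, in which case one of the triangles degenerates and the area change is trivially zero. Both of these follow from Corollary~\ref{cor:cong-subtriangles} and its hypotheses (the vertices being distinct and complementary forces $\angle ABB' + \angle B B' C = \angle ABB' + (2\pi - m(B)) \cdots$ appropriately, making $BB'$ cross $AC$), but they must be stated carefully so that the later flattening arguments in the nonagonal and decagonal cases can cite this corollary without worrying about self-intersection. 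With those points noted, the proof is essentially the one-line observation that the signed area between the old chain and the new geodesic is a difference of two congruent triangles.
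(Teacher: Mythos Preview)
Your argument only treats the case where $B$ and $B'$ are \emph{adjacent}. You write ``flattening $B,B'$ means replacing the chain $ABB'C$ with the single geodesic $AC$,'' and you invoke \cref{cor:cong-subtriangles}, whose hypothesis is precisely that $B$ and $B'$ are adjacent. But nothing in the statement of \cref{cor:flattening-comp-vertices} requires adjacency: complementary vertices may sit anywhere on the boundary of $\poly{Q}$, and in the later applications (the nonagonal case, \cref{prop:7-beats-9}) the non-adjacent situation genuinely occurs. When $B$ and $B'$ are not adjacent, flattening them is two separate operations on two disjoint three-edge chains $ABC$ and $A'B'C'$, and the relevant result is \cref{cor:cong-triangles}, not \cref{cor:cong-subtriangles}: one removes $\triangle ABC$ at the convex vertex and adds the congruent $\triangle A'B'C'$ at the concave one.

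The paper's proof splits explicitly into these two cases. Your signed-area bookkeeping for the adjacent case is fine and is essentially the same as the paper's ``remove $\triangle ABD$, add $\triangle DB'C$'' argument, but you need to add the second case before the corollary is proved.
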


\begin{proof}
    Without a loss of generality, let $m(B) < \pi$.
    If $B$ and $B'$ are adjacent, then flattening them amounts to removing the
    area of $\triangle ABD$ and adding the area of $\triangle DB'C$ to $\poly{Q}$,
    as shown in \cref{cor:cong-subtriangles}.
    Since these triangles are congruent, the area of $\poly{Q}$ does not change.
    If $B$ and $B'$ are not adjacent, then flattening them amounts to removing
    the area of $\triangle ABC$ and adding the area of $\triangle A'B'C'$ 
    to $\poly{Q}$, as shown in \cref{cor:cong-triangles}. 
    Since these triangles are congruent, the area of $\poly{Q}$ does not change.
\end{proof}

\begin{proposition} \label{prop:7-beats-9}
    Let $M$ be a closed hyperbolic surface.
    Then the regular heptagon $\poly{R}_7$ of area $\pi/3$ has less perimeter than any non-equivalent $9$-gonal tile $\poly{Q}$ of $M$ of the same area.
\end{proposition}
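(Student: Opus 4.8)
The plan is to split on how many interior angles of $\poly{Q}$ have measure at least $\pi$. Write $\ell_1$ for the number of angles of $\poly{Q}$ equal to $\pi$ and $\ell_2$ for the number strictly greater than $\pi$; since $\poly{Q}$ has $n=9$ sides and area $\pi/3=A_7$, \cref{cor:3} gives $\ell_1+2\ell_2\ge n-7=2$, so $\poly{Q}$ is not convex. If $\ell_1+\ell_2\ge 2$, then $\poly{Q}$ has at least $n-7=2$ angles of measure $\ge\pi$, so $\hull(\poly{Q})$ has at most $7$ vertices and \cref{cor:n-7concave} (equivalently \cref{prop:smolhull}) yields $P(\poly{Q})\ge P_7$, with equality only if $\poly{Q}\sim\poly{R}_7$; as $\poly{Q}$ is non-equivalent to $\poly{R}_7$ the inequality is strict and we are done. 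The only other possibility allowed by \cref{cor:3} is $\ell_1+\ell_2\le 1$, and combined with $\ell_1+2\ell_2\ge 2$ this forces $\ell_1=0$ and $\ell_2=1$. So it remains to treat the case in which $\poly{Q}$ has no angle of measure $\pi$ and exactly one concave vertex $B'$ with $m(B')>\pi$; moreover equality holds in \cref{cor:3}, so in every tiling of $M$ by $\poly{Q}$ the vertex $B'$ has degree $2$.

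For this remaining case I would run a flattening argument. Being a degree-$2$ concave vertex, $B'$ is surrounded by $\poly{Q}$ together with exactly one further copy of $\poly{Q}$, so by \cref{lem:complementary-vertices} the tile $\poly{Q}$ has a vertex $B\ne B'$ (distinct, since $\poly{Q}$ has no $\pi$-angle) of measure $m(B)=2\pi-m(B')\in(0,\pi)$ whose two incident edges match, in length, the two incident edges of $B'$. With $A,C$ the neighbors of $B$ and $A',C'$ the neighbors of $B'$, \cref{cor:cong-triangles} (or \cref{cor:cong-subtriangles} when $B$ and $B'$ are adjacent) gives $\triangle ABC\cong\triangle A'B'C'$. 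Now flatten the complementary pair $\{B,B'\}$: this fills the concave notch at $B'$ with a triangle congruent to $\triangle ABC$ and simultaneously truncates the convex vertex $B$ by that same triangle, producing a heptagon $\poly{Q}'$. By \cref{cor:flattening-comp-vertices} the area of $\poly{Q}'$ is still $\pi/3$, and since $0<m(B)<\pi$ both modifications strictly shorten the boundary (the hyperbolic triangle inequality applied to $\triangle ABC$ and to $\triangle A'B'C'$), so $P(\poly{Q}')<P(\poly{Q})$. Finally \cref{prop:3-k} with $k=7$ gives $P(\poly{R}_7)\le P(\poly{Q}')<P(\poly{Q})$, completing the proof.

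The step I expect to be the main obstacle is checking that the flattened polygon $\poly{Q}'$ is embedded — that neither truncating $B$ nor filling in the notch at $B'$ creates a self-intersection. This is exactly where having a \emph{single} concave vertex is used: if $\hull(\poly{Q})$ has at most $7$ vertices we are done by \cref{prop:smolhull}, and otherwise $\hull(\poly{Q})$ is obtained from $\poly{Q}$ by replacing the two edges meeting at $B'$ with the chord $A'C'$, so $\poly{Q}$ is convex away from $B'$ and $\hull(\poly{Q})\setminus\poly{Q}$ is the open triangle $A'B'C'$. The truncating triangle $\triangle ABC$ lies in $\overline{\poly{Q}}\subseteq\hull(\poly{Q})$ and can meet the notch only along the boundary, so deleting it from $\poly{Q}$ leaves a simple region, and adjoining the congruent notch triangle back keeps it simple. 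I would split this check into the adjacent case ($B$ and $B'$ share an edge, using the point $D=BB'\cap AC$ of \cref{cor:cong-subtriangles} and flattening the chain $ABB'C$ to the geodesic $AC$) and the non-adjacent case; in each, all that is needed beyond the single-concavity structure is the short verification that the newly introduced chord stays inside $\hull(\poly{Q})$ and avoids the rest of $\partial\poly{Q}$.
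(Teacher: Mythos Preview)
Your proof is correct and follows essentially the same approach as the paper: reduce via the convex hull to the case of a single strictly concave vertex, then flatten the complementary pair $B,B'$ to obtain a heptagon of the same area and strictly smaller perimeter. Your case split $\ell_1+\ell_2\ge 2$ versus $\ell_1=0,\ \ell_2=1$ is a slight reorganization of the paper's ``has a $\pi$-angle / has no $\pi$-angle'' split, and your embedding discussion makes explicit what the paper compresses into the clause ``otherwise the convex hull $H(\poly{Q})$ would be a polygon with seven or fewer sides.''
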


\begin{proof}
    Suppose $\poly{Q}$ has an angle of measure $\pi$. If there is only one such angle then by
    \cref{cor:3}, $\poly{Q}$ is equivalent to an octagon with at least one strictly
    concave angle. 
    By \cref{prop:octagons}, 
    $P(\poly{Q}) > P(\poly{R}_7)$.
    If there are two or more angles of measure $\pi$, $\poly{Q}$
    is equivalent to a polygon with seven or fewer sides, and so by
    \cref{prop:3-k}, $P(\poly{Q}) > P(\poly{R}_7)$.
    
    On the other hand, suppose that $\poly{Q}$ does not have an angle of measure $\pi$.
    By \cref{lemma:vertexdegrees}, there exist distinct vertices $B, B'$ on $\poly{Q}$
    such that $m(B) + m(B') = 2\pi$ and the edges incident to each vertex are equal in
    length. Let $A$ and $C$ be the vertices adjacent to $B$ and let $A'$ and $C'$ be
    those adjacent to $B'$. Since $\poly{Q}$ has no angles of measure $\pi$,
    let $m(B) < \pi$ without loss of generality. 
    Then there are no vertices in
    the interior of $\triangle ABC$ and $B$ is the only vertex in the interior of 
    $\triangle A'B'C'$, since otherwise the convex hull $H(\poly{Q})$ would be 
    a polygon with seven or fewer sides, so 
    $P(\poly{Q}) > P(H(\poly{Q})) \geq P(\poly{R}_7)$, and we are done.
    Let $\poly{Q}'$ be the heptagon formed by flattening both $B$ and $B'$, 
    whether or not they are adjacent.
    By \cref{cor:flattening-comp-vertices}, the area of $\poly{Q}$ is equal to
    that of $\poly{Q}'$, and the perimeter is reduced.
    Since $\poly{Q}'$ is a heptagon, it has at least as much perimeter as $\poly{R}_7$, so 
    $P(\poly{Q}) > P(\poly{Q}') \geq P(\poly{R}_7)$.
\end{proof}

\section{Decagonal Tiles} \label{sec:decagonal-tiles}

Proving that the regular heptagon has less area than any $10$-gonal tile (\cref{prop:heptagon-beats-decagon}) is yet more difficult than the $9$-gonal case since there may be multiple concave
angles, which means we need to also worry about the adjacency of the angles
on the tile. This in turn requires case work to address every possible
configuration of angles on the $10$-gon.

\begin{definition}
	Let $B$ be a vertex of a polygon. Then $\triangle B$ is the triangle formed
	by connecting $B$ and its adjacent vertices.
\end{definition}

\begin{lemma}[Angle Nesting]
	Let $\poly{Q}$ be a decagon with $2$ concave angles $A$, $A'$ with 
	corresponding convex angles $V, V'$. Then $P(\poly{Q}) > P(\poly{R}_7)$
	if either
	\begin{enumerate}[label={\textup{(\arabic*)}}]
		\item there exists a vertex in the interior of $\triangle A$; or
		\item there exists a vertex in the interior $\triangle V$ or 
		      $\triangle V'$ which is neither $A$ nor $A'$.
	\end{enumerate}
\end{lemma}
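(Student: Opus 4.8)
The plan is to reduce both cases to \cref{prop:smolhull} by showing that the convex hull $\hull(\poly{Q})$ has at most seven vertices. We work in $\H^2$, where the tile $\poly{Q}$ lifts to an embedded polygon of the same area $\pi/3 = A_7$ and the same perimeter; since $\poly{Q}$ has two angles strictly exceeding $\pi$, it is not equivalent to $\poly{R}_7$ (whose angles are all $2\pi/3$), so \cref{prop:smolhull} will yield the \emph{strict} inequality $P(\poly{Q}) > P(\poly{R}_7)$ as soon as $\hull(\poly{Q})$ has $\leq 7$ vertices. Hence it suffices to produce three distinct vertices of $\poly{Q}$ lying in $\operatorname{int}\hull(\poly{Q})$, and therefore absent from the vertex set of $\hull(\poly{Q})$.

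I would record two elementary observations. First, each concave vertex lies in $\operatorname{int}\hull(\poly{Q})$: at any boundary point a convex region occupies at most a half-plane worth of directions, whereas $\poly{Q}\subseteq\hull(\poly{Q})$ occupies more than $\pi$ worth of directions at $A$ and at $A'$; so already $A,A'\in\operatorname{int}\hull(\poly{Q})$ give two of the required vertices. Second, for any vertex $B$ of $\poly{Q}$ the triangle $\triangle B$ has all three vertices in $\hull(\poly{Q})$, so by geodesic convexity $\triangle B\subseteq\hull(\poly{Q})$ and hence $\operatorname{int}(\triangle B)\subseteq\operatorname{int}\hull(\poly{Q})$; in particular any vertex of $\poly{Q}$ in $\operatorname{int}(\triangle B)$ is a third interior vertex, provided it differs from $A$ and $A'$.

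For case (2) this finishes immediately: a vertex $w\in\operatorname{int}(\triangle V)$ or $\operatorname{int}(\triangle V')$ with $w\notin\{A,A'\}$ is automatically distinct from $V$ (or $V'$) and its two neighbours, so $\{A,A',w\}$ are three distinct vertices of $\poly{Q}$ inside $\operatorname{int}\hull(\poly{Q})$. For case (1), a vertex $w\in\operatorname{int}(\triangle A)$ is never equal to $A$; if it also differs from $A'$ we are done exactly as above. The remaining possibility is $w=A'$, the second concave vertex sitting inside $\triangle A$. Here I would trace $\partial\poly{Q}$ through $\triangle A$: the two sides of $\triangle A$ through $A$ are edges of $\poly{Q}$ and so cannot be crossed by the rest of $\partial\poly{Q}$, which forces $\partial\poly{Q}$ to enter and leave $\triangle A$ across the opposite chord; examining how the two edges at the reflex vertex $A'$ are then routed shows that $\poly{Q}$ has a further vertex in $\operatorname{int}(\triangle A)$ (alternatively, rerun the whole argument with $\triangle A'$ in place of $\triangle A$). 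In either case a third interior vertex appears.

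The main obstacle is precisely this nested configuration: one must rule out the scenario in which both reflex vertices are merely buried inside the hull while all eight remaining vertices are extreme, which would leave an octagonal hull and no contradiction. Making the embeddedness argument above precise — keeping careful track of which side of each chord the boundary lies on, and of possible coincidences among adjacent vertices — is the only genuinely delicate point; the rest is the convex-hull bookkeeping together with \cref{prop:smolhull}.
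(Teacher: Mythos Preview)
Your proposal is correct and follows essentially the same approach as the paper: both reduce to \cref{prop:smolhull} by exhibiting three vertices of $\poly{Q}$ in $\operatorname{int}\hull(\poly{Q})$, taking $A,A'$ as two of them and locating a third via the hypothesised interior vertex. The only difference is in the nested subcase $A'\in\operatorname{int}\triangle A$: the paper finds the third vertex by intersecting the geodesic line $AA'$ with $\partial\poly{Q}$ inside $\triangle A$ (and then passing to an endpoint of the edge hit), whereas you trace $\partial\poly{Q}$ through $\triangle A$ at the reflex vertex $A'$---these are minor variants at the same level of detail, and both leave the embeddedness bookkeeping as the delicate point you identify. Your parenthetical alternative ``rerun with $\triangle A'$'' does not work as stated, since nothing in the hypotheses places a vertex in $\operatorname{int}\triangle A'$.
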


\begin{proof}
	We show that in either case, the convex hull $\hull (\poly{Q})$ contains
	at least three vertices in its interior, and so has at most seven sides.
	\begin{enumerate}
		\item First, consider when $A'$ is inside $\triangle A$. Notice that 
		      the line connecting $AA'$ intersects $\poly{Q}$ at some point 
			  $M$, which is neither $A$ nor $A'$, that is inside $\triangle A$. 
			  If $M$ is a vertex of $\poly{Q}$ then $M$, $A$, and $A'$ are in 
			  the interior of $\hull (\poly{Q})$; otherwise, one of the two 
			  vertices $V$ of the edge on which $M$ lies must be in the 
			  interior of $\triangle A$, and so $V$, $A$, and $A'$ are in the 
			  interior of $\hull (\poly{Q})$.
			  
			  Suppose now that any vertex $V \not= A'$ is inside $\triangle A$.
			  Then $V$, $A$, and $A'$ all are in the interior of 
			  $\hull (\poly{Q})$.
		\item The only vertices which may lie in the interior of $\triangle V$ or $\triangle V'$ are $A$ and $A'$. If not, the convex hull must then contain the vertex inside $V$ or $V'$ as well as $A$ and $A'$, contradiction since there are three vertices in the interior of the convex hull. 
	\end{enumerate}
	Since $\poly{Q}$ is not equivalent to $\poly{R}_7$, $\hull (\poly{Q})$ is 
	a polygon with at most seven sides. If $\hull(\poly{Q}) = \poly{R}_7$,
	then since $\poly{Q}$ is not equivalent to $\poly{R}_7$, $\poly{Q}$ must
	contain a strictly concave angle, and so 
	$P(\poly{Q}) > P(\hull (\poly{Q})) = P(\poly{R}_7)$.
	On the other hand, if $\hull(\poly{Q})$ is an irregular heptagon or is
	a polygon with fewer than seven sides then we know that 
	$P(\poly{Q}) \geq P(\hull (\poly{Q})) > P(\poly{R}_7)$.
\end{proof}

\begin{remark}
	This shows that we may flatten the pairs of concave and convex angles
	of such a decagon without self-intersections.
\end{remark}

\begin{proposition} \label{prop:heptagon-beats-decagon}
	Let $M$ be a closed hyperbolic surface. Then the regular heptagon 
	$\poly{R}_7$ of area $\pi/3$ has less perimeter than any non-equivalent 
	$10$-gonal tile $\poly{Q}$ of $M$ of the same area.
\end{proposition}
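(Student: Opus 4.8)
The plan is a case analysis keyed to \cref{cor:3}. Write $\ell_1$ for the number of interior angles of $\poly{Q}$ equal to $\pi$ and $\ell_2$ for the number strictly greater than $\pi$. Since $\pi/3=A_7$ and $\poly{Q}$ has ten sides, \cref{cor:3} gives $\ell_1+2\ell_2\ge 3$. If $\ell_1+\ell_2\ge 3$, then $\poly{Q}$ has at least $3=10-7$ concave angles, so \cref{cor:n-7concave} already yields $P(\poly{Q})\ge P_7$ with equality only if $\poly{Q}\sim\poly{R}_7$, and we are done since $\poly{Q}\not\sim\poly{R}_7$. The only surviving possibilities are $(\ell_1,\ell_2)=(1,1)$ and $(\ell_1,\ell_2)=(0,2)$.

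When $(\ell_1,\ell_2)=(1,1)$, equality holds in \cref{cor:3}, so every vertex of the tiling has degree two or three; in particular the unique angle of measure $\pi$ of $\poly{Q}$ sits at a degree-$2$ vertex, which is therefore straight. Suppressing that vertex everywhere turns the tiling into a monohedral tiling of $M$ by the $9$-gon $\poly{Q}^\flat$ equivalent to $\poly{Q}$, with $\poly{Q}^\flat\not\sim\poly{R}_7$; \cref{prop:7-beats-9} then gives $P(\poly{Q})=P(\poly{Q}^\flat)>P(\poly{R}_7)$.

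The case $(\ell_1,\ell_2)=(0,2)$ is the substance. Let $A,A'$ be the two strictly concave vertices of $\poly{Q}$. A counting argument using \cref{lemma:vertexdegrees} shows that, after replacing $\poly{Q}$ by a congruent copy in the tiling, both $A$ and $A'$ may be taken to be degree-$2$ vertices: the tiling has on average at least $3=10-7$ degree-$2$ corners per tile, every such vertex pairs a convex angle of one copy of $\poly{Q}$ with the complementary (concave) angle of the adjacent copy since $\poly{Q}$ has no angle of measure $\pi$, hence on average more than one concave corner per tile lies at a degree-$2$ vertex, forcing some copy to realize both. Then \cref{lem:complementary-vertices} and \cref{cor:cong-triangles} furnish convex vertices $V,V'$ of $\poly{Q}$ with $m(V)=2\pi-m(A)$ and $m(V')=2\pi-m(A')$, whose incident edges match those of $A,A'$ in length, and by \cref{cor:flattening-comp-vertices} flattening $(A,V)$ — or $(A',V')$ — preserves the area $\pi/3$ while strictly reducing perimeter and deleting two vertices, hence turns $\poly{Q}$ into a polygon of area $\pi/3$ with at most eight sides that still has a strictly concave angle, so that \cref{prop:octagons} gives $P(\poly{Q})>P(\poly{R}_7)$. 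Before flattening, the Angle Nesting lemma disposes of every configuration in which a triangle $\triangle A$ or $\triangle A'$ contains another vertex of $\poly{Q}$, or a triangle $\triangle V$ or $\triangle V'$ contains a vertex other than $A,A'$: in those cases $\hull(\poly{Q})$ already has at most seven sides and $P(\poly{Q})>P(\poly{R}_7)$ directly.

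The main obstacle is certifying, in the configurations left over after the Angle Nesting lemma, that a flattening can be performed without producing a self-intersecting polygon — that the geodesic replacing the short arc at a concave or convex distinguished vertex crosses no other edge of the current polygon, and in particular that truncating at $V$ does not swallow $A'$ (a possibility the Angle Nesting lemma does not rule out). This requires a finite analysis organized by the cyclic order of $A,V,A',V'$ around $\poly{Q}$ and by which of $A,A'$ lies inside $\triangle V$ or $\triangle V'$; after the reductions the genuinely delicate situations amount to six substantive cases, and in each one must exhibit an order in which to flatten the complementary pair(s) that never creates a self-intersection. In every case the resulting polygon has at most eight sides, retains area $\pi/3$, and carries either a strictly concave angle or an angle of measure $\pi$, so \cref{prop:octagons} and \cref{prop:3-k} finish the proof.
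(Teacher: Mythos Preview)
Your case split via \cref{cor:3} is correct and matches the paper's, and your $(\ell_1,\ell_2)=(1,1)$ argument is in fact cleaner than the paper's: equality in \cref{cor:3} forces the $\pi$-angle to sit at a degree-$2$ vertex on every copy, so suppressing it globally does yield a bona fide monohedral $9$-gonal tiling and \cref{prop:7-beats-9} applies directly, whereas the paper instead flattens $A',V'$ with an adjacency subcase.

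The genuine gap is in $(\ell_1,\ell_2)=(0,2)$. You correctly reduce to a handful of adjacency configurations of $A,A',V,V'$ and announce a uniform recipe---flatten a complementary pair, land on an $8$-gon of area $\pi/3$ still carrying a strictly concave angle, invoke \cref{prop:octagons}---but you do not execute the cases, and the recipe as stated does not survive them. When $A'$ is adjacent to $A$ or to $V$, flattening the pair $(A,V)$ alters the interior angle at $A'$, which need not remain concave; and the Angle Nesting lemma explicitly permits $A'\in\triangle V$, so truncating at $V$ can swallow $A'$, a possibility you flag but do not resolve. The paper's six cases are not instances of one template. When $A,A'$ are adjacent and $V,V'$ lie elsewhere, it flattens the block $A'A$ jointly and $V$ separately, \emph{increasing} area by trading a triangle at $V$ for a larger trapezoid at $A'A$; when the cyclic order is $V'AA'V$, a separate counting argument on the tiling forces an extra angle congruent to $V'$ adjacent to $V$, reducing to a different case; when the order is $A'VAV'$, one must track that the amount by which the angle at $A'$ drops under flattening $VA$ reappears at $V'$, so that one of the two remains at least $\pi$. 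These case-specific maneuvers are the content of the proof, and your proposal stops exactly where they begin.
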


\begin{proof}
    By \cref{cor:n-7concave}, it suffices to consider only decagons with two or
	fewer concave angles. These angles cannot both have measure $\pi$, since
	otherwise by \cref{prop:Gauss-Bonnet} (Gauss-Bonnet), the remaining eight
	angles would have an average measure of $17\pi/24 > 2\pi/3$, implying they
	could not meet in threes and $\poly{Q}$ would have to contain another 
	concave angle.
	
	Suppose $\poly{Q}$ contains a single angle $A$ of measure $\pi$. Then there exists an angle $A'$ of measure greater than $\pi$ and a vertex $V'$ that fits into $A'$. Further, by
	\cref{lemma:vertexdegrees}, $\poly{Q}$ has an average of at least $3$ degree-two vertices per tile, so $A$, $A'$ and $V'$ always meet in twos. 
	First suppose $A$ and $V'$ are not adjacent. Let $\poly{Q}'$ be the polygon formed by flattening $A'$ and $V'$, and then taking the convex hull of the resulting shape. Then $\poly{Q}'$ is a heptagon of area $A\geq\pi/3$ and less perimeter. 
	If instead $A$ and $V'$ are adjacent, since $A$ must always meet in twos and therefore at $A$ on another copy of $\poly{Q}$, $A'$ is adjacent to $A$ as well. Flattening all three of $V', A, A'$ forms a heptagon $\poly{Q}'$ of area $A=\pi/3$ and less perimeter, as in \cref{fig:aapv}.
	In either case, by \cref{prop:reg-is-best},
	$P(\poly{Q}) < P(\poly{Q}') \leq P(\hull (\poly{Q}')) \leq P(\poly{R}_7)$.
	
    \begin{figure}[h]
        \centering
        \includegraphics{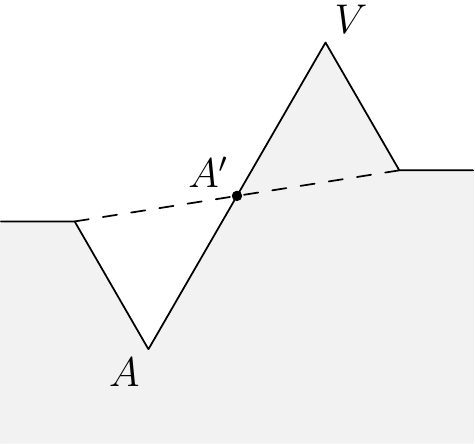}
        \caption{Flattening $AA'V$ reduces perimeter while preserving area.}
        \label{fig:aapv}
    \end{figure}
	
	Finally, we consider the case when $\poly{Q}$ contains two strictly concave
	angles $A,A'$, and two distinct corresponding strictly convex angles $V,V'$.
	If $A'$ is not adjacent to $A$ or $V$, then by flattening $A'$ would turn $\poly{Q}$ into a
	$9$-gon $\poly{Q'}$ with concave angle $A$ and corresponding angle $V$ that fits into $A$. By \cref{prop:7-beats-9}, $\poly{Q}'$ has more 
	perimeter than $\poly{R}_7$, so
	$P(\poly{Q}) > P(\poly{Q}') \geq P(\poly{R}_7)$. By symmetry, this also
	covers the case that $A$ is not adjacent to $A'$ or $V'$. If, however, $A'$ 
	\emph{is} adjacent to $A$ or $V$ (or, by symmetry, $A$ is adjacent to $A'$ 
	or $V'$), we enumerate the six possible orientations of the vertices and 
	show that the claim holds for each. Cases~\ref{case:1} and \ref{case:2} cover when $A'$ is adjacent to $A$ but not $V$; cases~\ref{case:3} and \ref{case:4} cover when $A'$ is adjacent to $V$ but not $A$; and cases~\ref{case:5} and \ref{case:6} cover when $A'$ is adjacent to both $A$ and $V$. 
	In the following proof, ``$\xymatrix@1@C=7pt{\ar@{-}[r] & }$'' is used to denote vertices that are not $V, A, V'$, or $A'$.
	
	\begin{enumerate}[label=(\arabic*)]
		\item $\xymatrix@1@C=7pt{\ar@{-}[r] & A'\!A & \ar@{-}[l]}$: \label{case:1}
		Flatten $A'\!A$ as in \cref{fig:aap} and flatten $V$. This reduces perimeter and increases area, because the 
		triangle removed at $V$ is congruent to the triangle added at $A$. 

\begin{figure}[ht]
\centering
\includegraphics{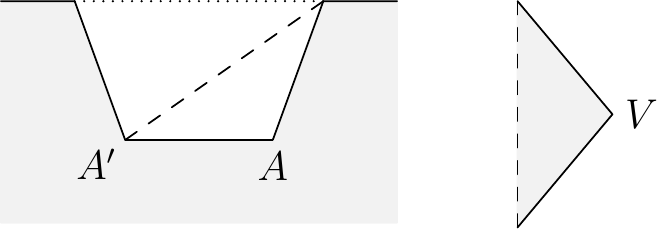}
\caption{In the \ptops{$\xymatrix@1@C=7pt{\ar@{-}[r] & A'\!A & \ar@{-}[l]}$}{--A'A--}
case, flattening reduces perimeter and increases area, because the triangle removed at $V$ is 
congruent to a triangular portion of the trapezoidal region added at $A'\!A$.}
\label{fig:aap}
\end{figure}

\item $\xymatrix@1@C=7pt{A'\!AV}$: \label{case:2} Flatten $AV$ as in the dashed line of \cref{fig:apav}, preserving area and reducing perimeter. Note that $A'$ remains concave after flattening $AV$. Taking the convex hull (dotted line) yields a polygon with seven or fewer sides, and so $P(\poly{Q}) > P(H(\poly{Q}')) \geq P(\poly{R}_7)$
Note that $V'$ may occur anywhere without affecting the argument, including adjacent to $A'$.

\begin{figure}[ht]
\centering
\includegraphics{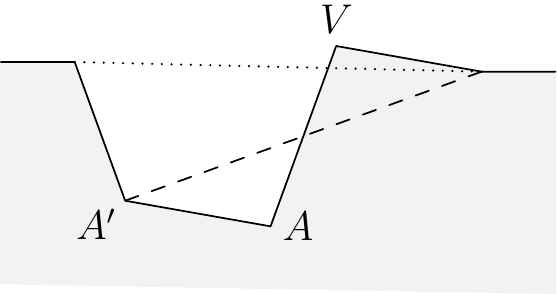}
\caption{In the $A'AV$ case, the dashed-line flattening is followed by taking the convex hull (dotted line).}
\label{fig:apav}
\end{figure}

\item $\xymatrix@1@C=7pt{V'\!AA'V}$: \label{case:3}
By \cref{lemma:vertexdegrees}, the average number of degree-two vertices per tile is at least 3, which means that some copy of $\poly{Q}$ must have both $A$ and $A'$ degree two. When $V$ fits into $A$, $A'$ cannot fit into itself (recall $A'>\pi$), so $A'$ must simultaneously fit into another $V'$. Thus necessarily the other angle adjacent to $V$ is congruent to $V'$, and so this
reduces to the following Case~\ref{case:4}, as illustrated in \cref{fig:vpaapvvp}.
	
\begin{figure}[h]
\centering
\includegraphics{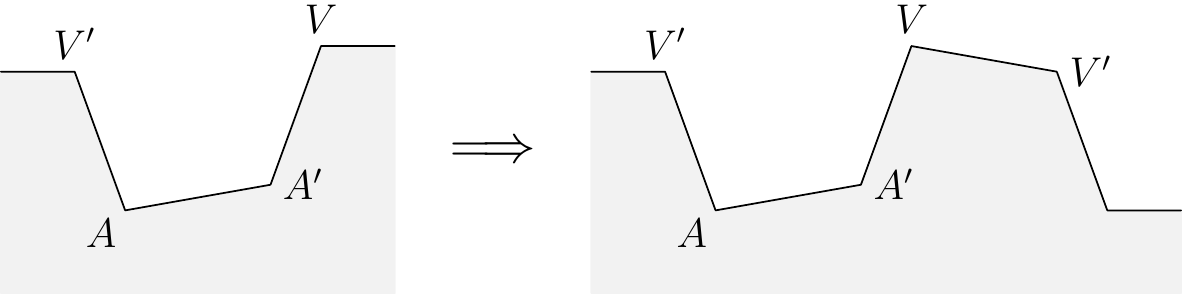}
\caption{The $V'\! A A' V$ configuration necessarily implies that there is an angle congruent to $V'$ adjacent to $V$, reducing to Case~\ref{case:4}.}
\label{fig:vpaapvvp}
\end{figure}

\item $\xymatrix@1@C=7pt{AA'VV'}$ and $\xymatrix@1@C=7pt{AA'V \ar@{-}[r] & V'}$: \label{case:4}
Flatten $A'$ and $V'$ as in \cref{fig:aapvvp}, preserving area and reducing perimeter. Note that the angle at $A$ remains concave. Take the convex hull of the resulting polygon and the result is heptagon or less with at least as much area and less perimeter than $Q$. Therefore $P(Q) >  P(Q_7)$. 

\begin{figure}[ht]
\centering
\includegraphics{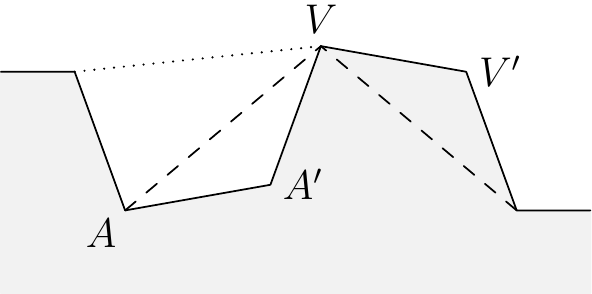}
\caption{In the $AA'VV'$ case, the dashed-line flattening is followed by taking the convex hull (dotted line).}
\label{fig:aapvvp}
\end{figure}

\item $\xymatrix@1@C=7pt{V\!A' \ar@{-}[r] & AV'}$ \label{case:5}
Note that every concave angle must be part of a degree 2 vertex, so the polygonal curve consisting of the three edges incident to $V$ and $A'$ must be congruent to the polygonal curve with edges incident to $A$ and $V'$. Therefore flattening $VA'$ and $AV'$ yields a heptagon or less with the same area and less perimeter. Therefore $P(\poly{Q}) > P(\poly{R}_7)$. 

\begin{figure}[ht]
\centering
\includegraphics{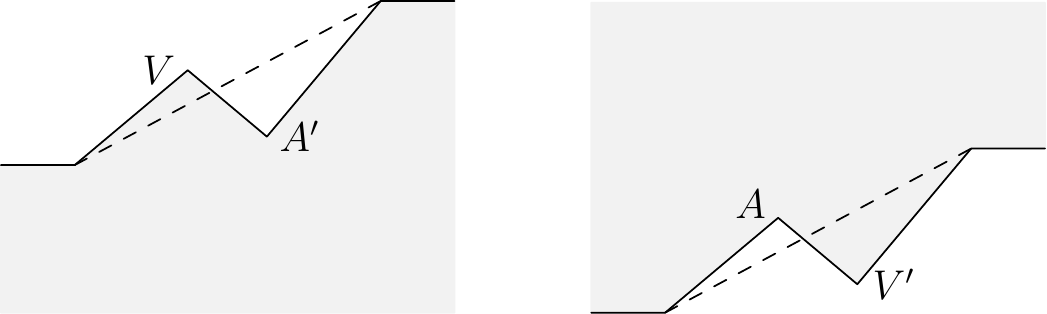}
\caption{The polygonal chain containing $V$ and $A'$ is congruent to the one containing $A$ and $V'$, so we may flatten simultaneously (dashed line) with no net change in area.}
\label{fig:vpaavp}
\end{figure}

\item $\xymatrix@1@C=7pt{A'V\!AV'}$ \label{case:6}
Flatten $VA$ as in \cref{fig:apvavp}, which preserves area and reduces perimeter. The measure of angle $A'$ will decrease, but the measure of angle $V'$ will increase by the same amount. This guarantees that $A'$ or $V'$ will have measure at least $\pi$. Taking the convex hull yields a heptagon or less with at least as much area and less perimeter than $\poly{Q}$. Thus
$P(\poly{Q}) > P(\poly{R}_7)$.

\begin{figure}[h]
\centering
\includegraphics{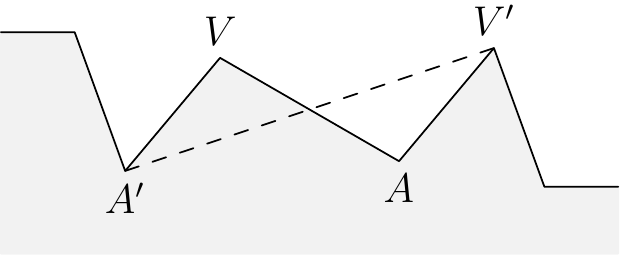}
\caption{In the $A'V\!AV'$ case, the dashed-line flattening is followed by taking the convex hull.}
\label{fig:apvavp}
\end{figure}

\end{enumerate}
	
Since these six cases enumerate all possible permutations of the relevant angles, the proof is complete.
\end{proof}

The following theorem is our main result.
\begin{theorem} \label{prop:heptagon-best}
	Let $M$ be a closed hyperbolic surface. Then the regular heptagon 
	$\poly{R}_7$ of area $\pi/3$ has less perimeter than any non-equivalent 
	$n$-gonal tile of $M$ of the same area for $n \leq 10$.
\end{theorem}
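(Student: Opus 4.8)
The plan is simply to assemble the pieces already proved in Sections~\ref{sect:hyperbolic-geometry} through~\ref{sec:decagonal-tiles}, treating the value of $n$ case by case. First I would dispose of the small cases $3 \le n \le 7$: any such tile has area $\pi/3 = A_7$, so \cref{cor:combination-3-7} (equivalently \cref{prop:3-k}, itself a consequence of \cref{prop:reg-is-best} and \cref{prop:reg-decreasing}) immediately gives that its perimeter is at least $P(\poly{R}_7)$, with equality only when the tile is equivalent to $\poly{R}_7$; since we assume non-equivalence, the inequality is strict.

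Next I would invoke the three dedicated propositions for the remaining values. For $n = 8$, \cref{cor:hept-beats-octa} states precisely that on a closed hyperbolic surface tiled by $\poly{R}_7$ any non-equivalent octagonal tile of area $\pi/3$ has strictly more perimeter; its proof already combined \cref{cor:3} (an octagonal tile must have a non-convex angle) with \cref{prop:octagons}. For $n = 9$, \cref{prop:7-beats-9} gives the conclusion directly, splitting on whether $\poly{Q}$ has an angle of measure $\pi$ and, if not, flattening a complementary pair of vertices via \cref{cor:flattening-comp-vertices}. For $n = 10$, \cref{prop:heptagon-beats-decagon} is exactly the statement needed; it reduces to decagons with at most two concave angles by \cref{cor:n-7concave}, handles the single-$\pi$-angle subcase, and then carries out the six-case flattening analysis guided by the Angle Nesting lemma. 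Collecting these four ranges of $n$ exhausts $n \le 10$ and completes the proof.

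Since every ingredient is already available, there is no genuinely new obstacle here; the theorem is a summary statement. If pressed to identify where the real difficulty lies, it is entirely in \cref{prop:heptagon-beats-decagon}: ensuring that each of the six adjacency configurations of the concave angles $A, A'$ and their complementary convex angles $V, V'$ can be flattened (possibly after passing to a convex hull) without producing a self-intersecting polygon, and checking in each configuration that the resulting shape is a heptagon-or-fewer of area at least $\pi/3$ and strictly smaller perimeter. In the write-up of the present theorem I would therefore keep the proof to a short paragraph that names the controlling result for each $n$ and defers all substance to those propositions.
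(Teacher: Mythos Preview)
Your proposal is correct and matches the paper's own proof, which is a one-line citation of \cref{prop:3-k}, \cref{cor:hept-beats-octa}, \cref{prop:7-beats-9}, and \cref{prop:heptagon-beats-decagon}. Your identification of where the real work lies (the six-case decagon analysis) is also accurate.
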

\begin{proof}
The theorem follows from \cref{prop:3-k,cor:hept-beats-octa,prop:7-beats-9,prop:heptagon-beats-decagon}
\end{proof}

\section{11-gonal Tiles}
Finally our work stalls with partial results on $11$-gonal tiles. They are similar to decagonal tiles in that they have at least two concave angles. However, they can have more concave angles, and there are more possible permutations of the concave $A_i$ and the corresponding convex $V_i$. We first employed casework based on the number of $A_i$, and further subdivided based on the number of angles exactly equal to $\pi$. The most difficult cases are when there are three concave angles, one of which might never meet in twos, with multiple copies of some $V_i$, and several subcases of exactly two concave angles. We resolved all but three of around 20 cases for $11$-gonal tiles before discovering a general proof. We leave the reader with several examples of the $11$-gon casework. 

\begin{lemma}
An $11$-gonal tile $\poly{Q}$ with exactly two concave angles $A_1, A_2$ must have $A_1, A_2$ meet in twos for every copy of $\poly{Q}$. 
\end{lemma}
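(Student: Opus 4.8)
The plan is to read the lemma straight off \cref{cor:3}. Write $\ell_1$ for the number of interior angles of $\poly{Q}$ of measure exactly $\pi$ and $\ell_2$ for the number of measure strictly greater than $\pi$; here $\poly{Q}$ has area $\pi/3 = A_7$, so $n = 11$ and $k = 7$, and the hypothesis ``exactly two concave angles $A_1,A_2$'' means $\ell_1 + \ell_2 = 2$.

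First I would apply \cref{cor:3}: since $\poly{Q}$ tiles a closed hyperbolic surface,
\[
\ell_1 + 2\ell_2 \ge n - k = 4.
\]
Subtracting $\ell_1 + \ell_2 = 2$ forces $\ell_2 \ge 2$, so in fact $\ell_2 = 2$, $\ell_1 = 0$, and $\ell_1 + 2\ell_2 = 4 = n-k$. In particular both concave angles $A_1,A_2$ have measure strictly greater than $\pi$ (no concave angle of measure exactly $\pi$ can occur), and $\poly{Q}$ sits exactly at the equality bound of \cref{cor:3}.

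The conclusion is then the equality clause of that proposition: because $\ell_1 + 2\ell_2$ attains its lower bound, equality holds for one---hence, by the ``therefore every tiling'' half of \cref{cor:3}, for every---tiling of the surface by $\poly{Q}$, which means every vertex of every such tiling has degree two or three and, crucially, every concave angle has degree two. Specializing this last statement to $A_1$ and $A_2$ says precisely that in every copy of $\poly{Q}$ appearing in any tiling, $A_1$ and $A_2$ occur at degree-two vertices, i.e.\ they meet in twos.

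I do not expect a genuine obstacle here: all the real work was already done in \cref{cor:3}, and what remains is bookkeeping. The two points needing care are (i) pinning down $\ell_1 = 0$ --- an $11$-gon with an angle of measure exactly $\pi$ and only one further concave angle would have $\ell_1 + 2\ell_2 = 3 < 4$ and so could not tile at all --- and (ii) using the ``therefore every tiling'' statement so that the conclusion genuinely applies to every copy of $\poly{Q}$ rather than to one fixed tiling. Alternatively, one may first dispose of any $11$-gon with an angle of measure $\pi$ by noting that it is equivalent to a polygon with at most ten sides and hence already covered by \cref{prop:heptagon-best}, which makes $\ell_1 = 0$ immediate.
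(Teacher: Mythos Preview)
Your argument is correct and is essentially the paper's own proof, just routed through \cref{cor:3} instead of appealing directly to \cref{lemma:vertexdegrees}; since \cref{cor:3} is an immediate consequence of \cref{lemma:vertexdegrees}, the two are the same counting argument. Your version is in fact more explicit than the paper's one-line proof, and the byproduct $\ell_1 = 0$ (no angle of measure exactly $\pi$) is a nice bonus.
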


\begin{proof}
By \cref{lemma:vertexdegrees}, there must be on average at least $11-7=4$ degree-two vertices per tile. The result follows. 
\end{proof}

\begin{corollary}
\label{11gontwopi}
An $11$-gonal tile $\poly{Q}$ with exactly one strictly concave angle $A_1$ and two angles $m(A_2)=m(A_3)= \pi$ must have $A_1$, $A_2$, $A_3$ meeting in twos for every copy of $\poly{Q}$.
\end{corollary}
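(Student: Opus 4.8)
The plan is to feed the hypotheses into \cref{cor:3} and then run a short census of the degree-two vertices. Here $n=11$ and the area is $\pi/3$, so $k=7$; the tile $\poly{Q}$ has $\ell_1=2$ angles of measure $\pi$ (namely $A_2,A_3$) and $\ell_2=1$ strictly concave angle ($A_1$), whence $\ell_1+2\ell_2=4=n-k$. So \cref{cor:3} holds \emph{with equality}: in any tiling of $M$ by $\poly{Q}$, every vertex has degree two or three and every copy of $A_1$ is a degree-two vertex. Since no vertex then has degree exceeding three, the equality clause of \cref{lemma:vertexdegrees} forces each tile to have, on average, exactly $k=7$ corners at vertices of degree at least $3$, hence exactly $n-7=4$ corners at degree-two vertices.

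Next I would classify the degree-two vertices of a fixed tiling with $T$ tiles according to the two face-angles meeting there, which sum to $2\pi$. Every angle of $\poly{Q}$ is less than $\pi$, equal to $\pi$, or greater than $\pi$, the last holding only for $A_1$; so each degree-two vertex is either \emph{Type I}, carrying two angles of measure $\pi$, or \emph{Type II}, carrying one copy of $A_1$ together with one strictly convex angle (of measure $2\pi-m(A_1)$). At most one angle at a vertex can exceed $\pi$, so distinct copies of $A_1$ lie at distinct vertices; and every Type II vertex carries a copy of $A_1$, since $A_1$ is the only concave angle of $\poly{Q}$. Combined with the fact that every copy of $A_1$ is a degree-two vertex, this identifies the Type II vertices with the $T$ copies of $A_1$.

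Now I would count corners. The number of (tile, corner) incidences at degree-two vertices is twice the number of degree-two vertices, namely $2(\#\,\text{Type I}+\#\,\text{Type II})=2\,\#\,\text{Type I}+2T$; by the first paragraph this number is also $4T$, so $\#\,\text{Type I}=T$. A $\pi$-angle of a tile sitting at a degree-two vertex must sit at a Type I vertex, and each Type I vertex absorbs exactly two such $\pi$-corners, so exactly $2T$ corners equal to $A_2$ or $A_3$ lie at degree-two vertices. But $\poly{Q}$ has exactly two angles of measure $\pi$, so the tiling has exactly $2T$ corners equal to $A_2$ or $A_3$ in all; hence every one of them lies at a degree-two vertex. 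This says precisely that $A_2$ and $A_3$ meet in twos on every copy of $\poly{Q}$, and together with the corresponding fact for $A_1$ it completes the proof.

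The substantive step is the middle one: establishing that the Type II degree-two vertices are exactly the $T$ occurrences of $A_1$, no more and no fewer, since this is what pins $\#\,\text{Type I}$ to $T$ and so leaves no room for a $\pi$-corner to hide at a degree-three vertex. Everything else is bookkeeping, provided one is careful that equality in \cref{cor:3} upgrades ``at least $n-k$ degree-two corners per tile on average'' to ``exactly $n-k$.''
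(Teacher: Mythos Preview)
Your proof is correct and takes essentially the same approach as the paper, which dispatches the corollary in one line by citing \cref{lemma:vertexdegrees}. Your version routes through the equality case of \cref{cor:3} and then adds an explicit Type~I/Type~II census of the degree-two vertices; this extra bookkeeping is sound but could be shortened, since the equality clause of \cref{cor:3} (read so that ``concave'' includes angles of measure~$\pi$) already forces $A_2$ and $A_3$, not just $A_1$, to degree-two vertices.
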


\begin{proof}
The result follows from \cref{lemma:vertexdegrees}. 
\end{proof}

\begin{corollary}[Angle Measures]
\label{cor:angle-measures}
Note that an $11$-gonal tile $\poly{Q}$ cannot have exactly one strictly concave angle and exactly one angle of measure $\pi$, as there could not be an average of $4$ degree-two vertices per tile. Similarly, $\poly{Q}$ cannot have exactly three concave angles all of which have measure $\pi$.
\end{corollary}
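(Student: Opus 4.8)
The plan is to derive both impossibilities from \cref{cor:3} (equivalently the degree count in \cref{lemma:vertexdegrees}) applied with $n = 11$ and $k = 7$, since the tile here has area $\pi/3 = A_7$. Writing $\ell_1$ for the number of interior angles of $\poly{Q}$ of measure exactly $\pi$ and $\ell_2$ for the number of measure strictly greater than $\pi$, \cref{cor:3} gives $\ell_1 + 2\ell_2 \ge n - k = 4$. Everything then reduces to arithmetic.

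For the first claim, ``exactly one strictly concave angle and exactly one angle of measure $\pi$'' means $\ell_2 = 1$ and $\ell_1 = 1$, so $\ell_1 + 2\ell_2 = 3 < 4$, contradicting \cref{cor:3}; hence no such $11$-gonal tile exists. For the second claim, ``exactly three concave angles all of which have measure $\pi$'' means every angle of measure at least $\pi$ in fact equals $\pi$ and there are three of them, i.e.\ $\ell_1 = 3$ and $\ell_2 = 0$, so again $\ell_1 + 2\ell_2 = 3 < 4$, a contradiction. One can phrase the same argument without quoting \cref{cor:3}: by \cref{lemma:vertexdegrees} the average number of vertices of degree $\ge 3$ per tile is at most $7$, so on average at least $11 - 7 = 4$ vertices per tile have degree two, and a degree-two vertex of the tiling forces the incident angle of $\poly{Q}$ to be either $\pi$ (paired with another $\pi$) or strictly greater than $\pi$ (paired with a strictly convex angle of some tile); tallying these against a single tile's angles gives the same inequality $\ell_1 + 2\ell_2 \ge 4$.

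Since the substantive content --- the bound $\ell_1 + 2\ell_2 \ge n - k$ --- is already established in \cref{lemma:vertexdegrees} and \cref{cor:3}, there is no real obstacle here; the only care required is in translating the two verbal hypotheses into the correct values of $\ell_1$ and $\ell_2$ and checking that in each case the sum $\ell_1 + 2\ell_2$ falls strictly short of $4$.
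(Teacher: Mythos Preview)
Your proof is correct and matches the paper's approach exactly. The paper does not give a separate proof of this corollary; its justification is embedded in the statement itself (``as there could not be an average of $4$ degree-two vertices per tile''), which is precisely the application of \cref{cor:3} with $n=11$, $k=7$ that you spell out.
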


\begin{lemma}[Four or More Concave Angles]
A non-equivalent $11$-gonal tile $\poly{Q}$ of area $\pi/3$ with four or more concave angles is worse than $\poly{R}_7$.
\end{lemma}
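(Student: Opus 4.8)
The plan is to deduce the lemma directly from \cref{cor:n-7concave}. For an $11$-gon of area $\pi/3 = A_7$ the relevant parameters are $n = 11$ and $k = 7$, so $n-k = 4$. Recall that in this paper a \emph{concave} angle is one of measure at least $\pi$ (it may equal $\pi$; see \cref{cor:angle-measures}), and that such a vertex is never a vertex of the convex hull once one passes to the representative with the minimal number of vertices. Hence an $11$-gonal tile $\poly{Q}$ with four or more concave angles has at least $n-k = 4$ angles of measure at least $\pi$, and its convex hull $\hull(\poly{Q})$ therefore has at most $11-4 = 7$ vertices.

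First I would observe that $\poly{Q}$ has area $A_7 = (7-6)\pi/3 = \pi/3$ and that $\hull(\poly{Q})$ has at most seven sides, so \cref{prop:smolhull} (equivalently \cref{cor:n-7concave}) applies and yields $P(\poly{Q}) \geq P_7 = P(\poly{R}_7)$, with equality only if $\poly{Q} \sim \hull(\poly{Q}) = \poly{R}_7$. Since $\poly{Q}$ is assumed non-equivalent to $\poly{R}_7$, equality is impossible, so $P(\poly{Q}) > P(\poly{R}_7)$, which is exactly the assertion of the lemma.

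There is no genuine obstacle here: unlike the decagonal case (\cref{prop:heptagon-beats-decagon}) and the remaining, harder $11$-gonal cases, once there are at least four concave angles the convex hull already has at most seven vertices, so no flattening and no case analysis of angle adjacencies is needed — the entire content is \cref{cor:n-7concave}. The only thing to check carefully is the bookkeeping that four concave angles (including any of measure exactly $\pi$) really drop the convex-hull vertex count to seven, which is immediate from our definition of $\hull$. In fact the same one-line argument shows that any $11$-gon of area $\pi/3$ whose convex hull has at most seven vertices has more perimeter than $\poly{R}_7$, whether or not it tiles a surface.
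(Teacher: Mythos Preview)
Your proposal is correct and is essentially identical to the paper's own proof: the paper also simply invokes \cref{cor:n-7concave} with $n=11$, $k=7$ to get $P(\poly{Q})\ge P(\poly{R}_7)$, with equality only if $\poly{Q}\sim\poly{R}_7$, and then uses non-equivalence to conclude strict inequality.
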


\begin{proof}
    Suppose $\poly{Q}$ has four or more concave angles. By \cref{cor:n-7concave}, $P(\poly{Q})\geq P(\poly{R}_7)$, with equality if and only if $\poly{Q}\sim\poly{R}_7$.
\end{proof}

\begin{proposition}[Three Concave Angles]
A non-equivalent $11$-gonal tile $\poly{Q}$ of area $\pi/3$ with exactly three concave angles is worse than $\poly{R}_7.$
\end{proposition}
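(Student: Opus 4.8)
The plan is to follow the template of the proof of \cref{prop:heptagon-beats-decagon}, carrying one extra concave angle through the bookkeeping. Write $\ell_1$ for the number of angles of $\poly{Q}$ of measure exactly $\pi$ and $\ell_2$ for the number of strictly concave angles, so $\ell_1+\ell_2=3$. By \cref{cor:angle-measures} we cannot have $\ell_1=3$, so $\ell_1\le 2$; and if $\ell_1\ge 1$, then $\poly{Q}$ is equivalent to an $n$-gonal tile $\poly{Q}'$ of $M$ with $n=11-\ell_1\in\{9,10\}$, of area $\pi/3$ and with the same perimeter as $\poly{Q}$, so \cref{prop:heptagon-best} gives $P(\poly{Q})=P(\poly{Q}')>P(\poly{R}_7)$. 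Hence we may assume $\ell_1=0$: the tile has three strictly concave angles $A_1,A_2,A_3$ and no angle of measure $\pi$.

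First I would clear away the ``nested'' configurations, just as in the Angle Nesting lemma. The convex hull $\hull(\poly{Q})$ has at most $11-3=8$ vertices, and if any vertex of $\poly{Q}$ other than $A_1,A_2,A_3$ is not a vertex of $\hull(\poly{Q})$---in particular if some $A_j$ lies in the interior of a triangle $\triangle A_i$, or some vertex other than the $A_i$ lies in a $\triangle A_i$ or in the triangle at a convex vertex into which some $A_i$ fits---then $\hull(\poly{Q})$ has at most seven sides, so $P(\poly{Q})\ge P(\hull(\poly{Q}))>P(\poly{R}_7)$ by \cref{prop:3-k} and the hypothesis $\poly{Q}\not\sim\poly{R}_7$. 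So we may assume no such nesting occurs; in particular each triangle that the flattenings below fill in or cut off contains no stray vertex of $\poly{Q}$.

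Next I would run the degree-two count. By \cref{lemma:vertexdegrees} there are on average at least $11-7=4$ corners of degree two per tile, and with no angle of measure $\pi$ present, every such corner is a strictly concave corner or the complementary convex corner into which some concave corner fits; since these pair up across the tiling, the average number of degree-two \emph{concave} corners per tile is at least $2$, so at least two of $A_1,A_2,A_3$ --- say $A_2,A_3$ --- occur at a degree-two vertex in some copy of $\poly{Q}$. By \cref{lem:complementary-vertices}, $\poly{Q}$ then has convex vertices $V_2,V_3$ with $m(V_i)=2\pi-m(A_i)$ whose incident edges match those of $A_i$ in length, and a further count of corner multiplicities lets us take $V_2\ne V_3$ (in the residual degenerate configurations, a preliminary flattening of one complementary pair reduces $\poly{Q}$ to a $9$-gonal tile of area $\pi/3$, handled by \cref{prop:heptagon-best}). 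Now $A_2,V_2,A_3,V_3$ are four distinct vertices, and I would flatten the complementary pairs $(A_2,V_2)$ and $(A_3,V_3)$ simultaneously; by \cref{cor:flattening-comp-vertices} this preserves area, it strictly reduces perimeter, and it turns $\poly{Q}$ into a heptagon $\poly{Q}'$ of area $\pi/3$ (possibly still carrying the concave angle $A_1$). Hence $P(\poly{Q})>P(\poly{Q}')\ge P(\poly{R}_7)$ by \cref{prop:reg-is-best}. (When all three $A_i$ have complementary vertices one may instead flatten all three pairs, obtaining a pentagon of area $\pi/3$ and concluding identically via \cref{prop:3-k}.)

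The step I expect to be the main obstacle is, as in \cref{prop:heptagon-beats-decagon}, verifying that the simultaneous flattening can be done without creating self-intersections. Having removed the nested configurations, the only remaining difficulties are adjacencies among the distinguished vertices $A_1,A_2,A_3,V_2,V_3$: an $A_i$ adjacent to another $A_j$, an $A_i$ adjacent to the $V_j$ into which it fits, or the two complementary pairs abutting. Each configuration must be enumerated and resolved exactly as in Cases~(1)--(6) of \cref{prop:heptagon-beats-decagon}: when two concave vertices are adjacent one flattens the combined chain and balances the area added against the triangles cut off at the corresponding convex vertices, and when a flattening leaves a residual concave vertex---or creates one through the transfer of angle measure across a shared edge, as in the last case there---one finishes by passing to the convex hull, which then has at most seven sides. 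The enumeration is heavier than in the decagonal case, since there are up to six distinguished vertices rather than four and the third concave angle may fail to meet in twos, but no genuinely new phenomenon arises.
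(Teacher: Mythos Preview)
Your proposal follows the same general strategy the paper pursues---reduce by the number of $\pi$-angles, then flatten complementary pairs and pass to the convex hull---but it contains genuine errors that the paper avoids precisely because the paper is candid that its own proof here is \emph{incomplete}. The section opens by stating that ``our work stalls with partial results on 11-gonal tiles'' and that several cases were never resolved; the proof you are attempting to reconstruct does not exist in the paper in finished form.

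The central mistake is your repeated assumption that an equivalent or flattened polygon is again a \emph{tile} of $M$, which is what \cref{prop:heptagon-best} requires. When $\ell_1=1$, removing the $\pi$-angle vertex $A_3$ yields a 10-gon $\poly{Q}'$ that need not tile: in the original tiling $A_3$ may sit at a degree-$\ge 3$ vertex (two convex angles from other copies summing to $\pi$), and that graph vertex cannot be deleted, so no monohedral 10-gonal tiling of $M$ by $\poly{Q}'$ arises from the given one. The paper says exactly this in its Case~(2): the resulting 10-gon ``while not a tile, satisfies the concave and convex angle requirements of a $10$-gon.'' Your appeal to \cref{prop:heptagon-best} at this step is therefore not justified. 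The same error recurs when you write that ``a preliminary flattening of one complementary pair reduces $\poly{Q}$ to a $9$-gonal tile'': flattening changes the shape, so the result certainly does not tile $M$, and \cref{prop:heptagon-best} again cannot be invoked. You would instead have to rerun the internal arguments of \cref{prop:7-beats-9} and \cref{prop:heptagon-beats-decagon} on a polygon that merely \emph{inherits} a complementary pair $(A,V)$ with matching edge-lengths, without the tiling hypothesis.

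Finally, your claim that in the adjacency enumeration ``no genuinely new phenomenon arises'' is not substantiated. The paper reports roughly twenty cases, three of which it could not close by these methods; in particular the third concave angle $A_1$ may never meet in twos, so it has no companion $V_1$, and the interactions among $A_1,A_2,A_3,V_2,V_3$ (and possible repeated convex angles) are not covered by the six decagonal configurations. Your outline is a reasonable plan of attack, but it does not constitute a proof.
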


\begin{proof}
Consider such a $\poly{Q}$ with concave angles $A_1, A_2, A_3$ and corresponding convex angles $V_1, V_2, V_3$. 
We first consider the cases where some of the $A_i$ have measure $\pi$.
By \cref{cor:angle-measures}, we may assume without loss of generality $m(A_1)>\pi$. 
\begin{enumerate}
    \item If $m(A_2)=m(A_3)=\pi$, by \cref{11gontwopi}, each $A_i$ always meets in twos. Removing $A_2$ and $A_3$ forms a $9$-gonal \emph{tile} $\poly{Q}'$ since both vertices always met in twos and thus were only ever aligned with each other. Since $\poly{Q}'$ is a $9$-gonal tile, it reduces to Section $7$.
    \item If only $m(A_3)=\pi$, consider its neighbors. If none of $A_i,V_i, i\leq 2$ neighbor $A_3$, remove $A_3$, resulting in a $10$-gon which, while not a tile, satisfies the concave and convex angle requirements of a $10$-gon, and thus, with a little more work, can be shown to be sub-optimal.  \qedhere
\end{enumerate}
\end{proof}

The following represents some incomplete results necessary for the $11$-gon proof.
\begin{lemma} 
$A_i$ neighbors $A_3$ if and only if $V_i$ neighbors $A_3$.
\end{lemma}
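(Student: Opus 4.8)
The plan is to leverage the fact that $A_3$ is a \emph{straight} angle. Throughout we work in the subcase at hand: $m(A_1),m(A_2)>\pi$ and $m(A_3)=\pi$, with $V_i$ the convex angle complementary to $A_i$, so that $m(A_i)+m(V_i)=2\pi$. One first picks a copy of $\poly{Q}$ in which $A_3$ sits at a degree-two vertex; such copies exist since the only pairs of angles of $\poly{Q}$ summing to $2\pi$ are $(\pi,\pi)=(A_3,A_3)$ and $(m(A_i),m(V_i))$, so the count in \cref{lemma:vertexdegrees} (as in the reasoning behind \cref{11gontwopi}) forces $A_3$, together with $A_1$ and $A_2$, to be degree two. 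At such a vertex the two edges of $\poly{Q}$ incident to $A_3$ are collinear, lying on a line $\ell$, and the one other tile $\poly{Q}^{*}$ filling the vertex is $g(\poly{Q})$ for an isometry $g$ that fixes $A_3$ and carries the $\poly{Q}$-side of $\ell$ to the other side; hence $g$ is either the point reflection (rotation by $\pi$) about $A_3$ or the reflection in $\ell$.

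For the forward implication, suppose the concave vertex $A_i$ is a neighbour of $A_3$ on $\poly{Q}$. Then $g$ cannot be the reflection in $\ell$: that map fixes $A_i$ (which lies on $\ell$, being an endpoint of the edge $A_3A_i\subset\ell$) pointwise, and so would place a copy of the angle $m(A_i)$ from $\poly{Q}^{*}$ alongside the angle $m(A_i)$ from $\poly{Q}$ at that point, giving total angle exceeding $2\pi$. So $g$ is the point reflection about $A_3$; in particular the two edges at $A_3$ have equal length (edge-to-edgeness forces $g$ to send the edge $A_3A_i$ onto the opposite edge of $\poly{Q}$ at $A_3$), and $g$ carries $A_i$ to the position of the other neighbour $B$ of $A_3$. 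Thus at the point $B$ the tile $\poly{Q}$ contributes $m(B)$ while $\poly{Q}^{*}$ contributes a copy of $m(A_i)$, so $m(B)\le 2\pi-m(A_i)=m(V_i)$; once one knows this vertex is itself degree two, the inequality is an equality, the incident edges coincide, and $B$ is (a vertex playing the role of) $V_i$. Hence $V_i$ neighbours $A_3$.

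The converse is the analogous argument run through the same point reflection about $A_3$: if the convex vertex $V_i$ neighbours $A_3$, then the other neighbour of $A_3$ carries a copy of the angle $m(V_i)$ of $\poly{Q}^{*}$, so has measure $2\pi-m(V_i)=m(A_i)>\pi$ once that vertex is degree two, and by matching measure and incident edge lengths it is $A_i$. The extra wrinkle is that when the neighbour of $A_3$ is convex rather than concave, the ``$g$ is the reflection in $\ell$'' possibility is no longer immediately absurd; it must be excluded because a reflective gluing would double the measure of \emph{both} neighbours of $A_3$, forcing each such doubled vertex either to have measure $\pi$ (impossible for a convex vertex, and $A_3$ is the unique vertex of $\poly{Q}$ of measure $\pi$) or to be of degree $\ge 3$, against which the degree budget of \cref{lemma:vertexdegrees} has to be played off.

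I expect the main obstacle to be precisely this degree bookkeeping. The clean conclusion ``the opposite neighbour of $A_3$ is $V_i$ (resp.\ $A_i$)'' rests on the complementary angles summing to exactly $2\pi$, which holds only when the vertices in question are degree two; and ruling out the reflective gluing in the converse amounts to showing that certain vertices cannot be of high degree. In the three-concave-angle situation these counts are tight and, as already noted, one of the $A_i$ need not meet in twos while some $V_i$ may occur with multiplicity, so making the inequalities of \cref{lemma:vertexdegrees} close will require the same delicate casework on the number and multiplicities of the $V_i$ that drives the rest of the $11$-gon analysis.
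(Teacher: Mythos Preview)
Your approach differs from the paper's. You work locally: fix a copy of $\poly{Q}$ with $A_3$ at a degree-two vertex, classify the gluing isometry $g$ as either reflection in $\ell$ (which you correctly rule out in the forward direction, since it would double the concave angle $m(A_i)$) or the point reflection about $A_3$, and then try to read off that the other neighbor $B$ of $A_3$ has $m(B)=m(V_i)$. The paper instead argues globally by contradiction: suppose $A_i$ neighbors $A_3$ but $V_i$ does not. Whenever two $A_3$'s pair at a degree-two vertex, neither adjacent copy of $A_i$ can itself be degree two (the angle meeting it from the other tile is $m(B)\neq m(V_i)$), so each such $A_3$--$A_3$ pairing contributes two degree-two slots but kills four potential ones (both $A_i$'s and their would-be $V_i$ partners), dragging the average below the bound of \cref{lemma:vertexdegrees}. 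The converse is handled the same way, with a side remark for the case of multiple $V_i$'s.

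Your classification of $g$ is clean, and your exclusion of the reflective case is something the paper leaves implicit. But the gap you yourself flag---needing $B$ to be degree two in the chosen copy to force $m(B)=m(V_i)$ exactly rather than $m(B)\le m(V_i)$---is not a loose end to be tied up later; it is the substance of the lemma. Nothing guarantees a single copy in which $A_3$ and $B$ are simultaneously degree two, so the local argument cannot close on its own. The paper sidesteps this by never trying to identify $B$ in one tile: the averaging runs over the whole tiling, and the contradiction comes from the mean, not from any particular copy. In short, the ``degree bookkeeping'' you anticipate as an obstacle is not a patch on your geometric argument but the argument itself.
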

\begin{proof}
Without loss of generality, assume $A_1$ neighbors $A_3$. Assume $V_1$ does not. Since on average $\poly{Q}$ has four degree-two vertices per tile, $A_1$ must sometimes meet in twos. When $A_3$ meets in twos—and therefore meets $A_3$ on another copy of $\poly{Q}$—both copies of $A_1$ cannot meet in twos, since the only other neighbor of $A_3$ is not $V_1$. But then $A_3$ adds two degree-two vertices to the overall sum, but subtracts both potential $A_1$ and $V_1$, a total of four. Thus the average is too small, and $V_1$ must neighbor $A_3$.

Now without loss of generality assume $V_1$ neighbors $A_3$. Similar to the above, when $A_3$ meets in twos and meets $A_3$ on another copy of $\poly{Q}$, both copies of $V_1$ cannot meet in twos, which leaves two other copies of $A_1$ unfilled as well. A separate case covers when there is more than $1$ copy of $V_1$; if so, that's advantageous, as we can then use the other $V_1$ instead. Again, this makes the average too small, so $A_1$ also neighbors $A_3$. Flattening $V_1, A_1$ and $A_3$ as in the diagram forms an $8$-gon with at least one convex angle ($A_2$). Taking the convex hull to form $\poly{Q}'$ yields a $7$-gon with equal or greater area. Hence $P(\poly{R}_7) < P(\poly{Q}') < P(\poly{Q})$. 
\end{proof}
More casework would be necessary to fully resolve the case of $11$-gons. 

\section{Euclidean Hexagons}
A subsequent paper \cite{hirsch} simultaneously proves \cref{con:main-result} in comparison with polygons of any number $n$ of sides, generalizes the result from $7$ to all $k \geq7$, and remarks that the same methods yield a relatively simple proof of a weak version (\cref{hexagons}) of Hales's theorem \cite{hales} on Euclidean hexagons. Here we provide the details behind the extension to Euclidean hexagons. The following propositions and lemmas \ref{euc-vertexdegrees}--\ref{euc-A(n/2)} provide (generally easier) Euclidean versions of the hyperbolic cases presented in \cite[Lemma~4.3, Proposition~5.3, Lemma~5.4, and Lemma~5.5]{hirsch}.
\begin{lemma}
\label{euc-vertexdegrees}
Consider a tiling of a flat torus by curvilinear polygons. Then each polygon has on average at most $6$ vertices of degree at least $3$, with equality if and only if every vertex has degree two or three.
\end{lemma}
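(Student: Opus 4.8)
The plan is to mimic the proof of \cref{lemma:vertexdegrees} verbatim, replacing the Gauss--Bonnet input (which in the hyperbolic case gave a strictly negative total curvature $-A_kF$) by the fact that a flat torus $T$ has zero curvature, so $\int_T G = 2\pi\chi(T) = 0$; equivalently, Euler's formula $V - E + F = 0$ holds directly.

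First I would fix a tiling of $T$ with $F$ faces, $E$ edges, and $V$ vertices, and examine a single tile with $n$ edges and $v$ vertices of degree at least $3$. Along that tile's boundary the remaining $n-v$ vertices have degree exactly $2$. Since each edge lies on exactly two tiles, the tile contributes $n/2$ to $E$; since each degree-$2$ vertex lies on exactly two tiles and each vertex of degree $d \geq 3$ lies on $d \geq 3$ tiles, the tile contributes at most $(n-v)/2 + v/3$ to $V$, with equality precisely when none of its degree-$\geq 3$ vertices has degree greater than $3$. Hence the tile's contribution to $V - E + F$ is at most
\[
1 - \frac{n}{2} + \left(\frac{n-v}{2} + \frac{v}{3}\right) = 1 - \frac{v}{6}.
\]
Summing the per-tile contributions over all $F$ tiles and using $\chi(T) = 0$,
\[
0 \;=\; V - E + F \;\leq\; \sum_{i=1}^{F}\left(1 - \frac{v_i}{6}\right) \;=\; F - \frac{1}{6}\sum_{i=1}^{F} v_i,
\]
so $\frac{1}{F}\sum_i v_i \leq 6$, which is the claim. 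The inequality is an equality exactly when every tile attains equality in the vertex count simultaneously, that is, exactly when every vertex of the tiling has degree $2$ or $3$; this yields the equality clause.

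There is essentially no hard step here. The only points requiring care are the bookkeeping in the vertex count -- each degree-$2$ vertex split evenly between its two incident tiles, each higher-degree vertex contributing at most $1/3$ per incident tile -- and carrying the equality condition through the summation. One may phrase the argument via Gauss--Bonnet exactly as in \cref{lemma:vertexdegrees}, but for a flat torus Euler's formula suffices. I expect the only real choice to make in the writeup is whether to present it curvature-theoretically (to emphasize parallelism with the hyperbolic lemma) or purely combinatorially.
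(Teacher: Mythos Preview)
Your proposal is correct and essentially identical to the paper's own proof: both count per-tile contributions to $F-E+V$ as at most $1-v/6$ and then use $\chi(T)=0$ (the paper phrases this via Gauss--Bonnet, you via Euler's formula directly, which is the same thing on a flat torus). The equality analysis is also the same.
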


\begin{proof}
A tile with $n$ edges and $v$ vertices of degree at least $3$ contributes to the tiling $1$ face, $n/2$ edges, and at most $(n-v)/2 + v/3$ vertices, with equality precisely if no vertices have degree greater than $3$. Therefore it adds at most $1-v/6$ to the Euler characteristic $F-E+V$. The Gauss-Bonnet theorem says that 
\[ 
    \int G=2\pi(F-E+V). 
\]
Hence the average contributions per tile satisfy
\[
0 \leq 2\pi(1-\overline{v}/6).
\]
Therefore $\overline{v} \leq 6$, with equality if and only if no vertices have degree more than~$3$.
\end{proof}
\begin{proposition}\label{euc-cover}
Let $M$ be a flat torus tiled by curvilinear polygons $Q_i$. Let $\poly{Q}_i^*$ be the convex hull of the vertices of degree three or higher of $Q_i$. Then $\{\poly{Q}_i^*\}$ covers $M$ and the average number of sides is less than or equal to $6$. 
\end{proposition}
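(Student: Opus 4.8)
The plan is to prove the two assertions separately. The bound on the average number of sides is immediate: the convex hull of a set of $v$ points is a polygon with at most $v$ sides, so $Q_i^*$ has at most as many sides as $Q_i$ has vertices of degree at least $3$; by \cref{euc-vertexdegrees} the average over all tiles of the latter quantity is at most $6$, hence so is the average number of sides of the $Q_i^*$.

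For the covering statement, the plan is to flatten, simultaneously in every tile, all maximal chains of consecutive degree-two vertices. Fix a tile $Q_i$ and list its vertices of degree at least $3$ in cyclic order as $w_1, \dots, w_{v_i}$; between $w_k$ and $w_{k+1}$ the boundary of $Q_i$ passes through a (possibly empty) chain of degree-two vertices. Replace each such chain by the straight segment $\overline{w_k w_{k+1}}$ (working in the plane $\R^2$, i.e.\ in the universal cover of $M$), and let $\widetilde{Q}_i$ denote the resulting region, whose vertices are exactly the degree-$\geq 3$ vertices of $Q_i$. Since a region is contained in its convex hull and $\hull(\widetilde{Q}_i) = \hull\{w_1, \dots, w_{v_i}\} = Q_i^*$, it suffices to show that the $\widetilde{Q}_i$ cover $M$.

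The crux is the observation that a maximal chain of degree-two vertices running from $w_k$ to $w_{k+1}$ along $Q_i$ is shared with exactly one other tile $Q_j$ (possibly $Q_j = Q_i$): each degree-two vertex on the chain is incident to exactly two faces, one being $Q_i$, and two consecutive vertices of the chain lie on a common edge, which is incident to exactly two faces, so the other face must be the same tile along the whole chain. Hence flattening such a chain for $Q_i$ \emph{and} for $Q_j$ at once uncovers no point of $M$: in a neighborhood of the chord $\overline{w_k w_{k+1}}$ the regions $\widetilde{Q}_i$ and $\widetilde{Q}_j$ still partition exactly what $Q_i$ and $Q_j$ did before, the separating curve having merely been straightened, while points lying away from all flattened chains are untouched. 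Therefore the $\widetilde{Q}_i$, and a fortiori the $Q_i^*$, cover $M$.

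The main obstacle I anticipate is the geometric bookkeeping around the flattening: one must verify carefully that each chain separates a single complementary tile (the face count above), that the straightened curve bounds a region consistently assigned to the two tiles even in the degenerate case $Q_j = Q_i$ where the chain is flattened on both sides, and that passing to the universal cover does no harm because the chains may be taken short compared with the injectivity radius of $M$. I do not expect an essential difficulty here; in particular, since only a covering and not a tiling is claimed, there is no need to rule out chords crossing one another or crossing distant edges.
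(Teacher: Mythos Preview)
Your proof is correct and follows essentially the same approach as the paper: flatten all degree-$2$ chains to obtain (possibly immersed) polygons contained in the $Q_i^*$, and invoke \cref{euc-vertexdegrees} for the side count. The paper outsources the covering step to a lemma in a companion paper, whereas you spell out the key observation that each maximal degree-$2$ chain separates exactly two faces, so simultaneous flattening loses no points; your version is thus more self-contained.
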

\begin{proof}
By the Euclidean restatement of \cite[Lemma 5.2]{hirsch}, straightening edges and flattening all degree-$2$ vertices yields a covering by immersed polygons, each covered by the corresponding $Q_i^*$. Hence $\{\poly{Q}_i^*\}$ covers $M$. By \cref{euc-vertexdegrees}
the average number of sides is less than or equal to $6$.
\end{proof}
\begin{proposition}
\label{lem:euc-area-n}
The area of the regular $n$-gon with perimeter $P$ is given by \[A(n)=\frac{P^2\cot\alpha}{4n},\] where $\alpha=\pi/n.$ The function $A(n)$ is strictly increasing and strictly concave on $[2,\infty).$ We extend $A(n)$ continuously to be identically $0$ on the interval $[0,2].$
\end{proposition}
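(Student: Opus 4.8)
The plan has two parts: derive the closed form, then prove the two calculus claims by pushing everything through the substitution $x=\pi/n$. For the formula, join the center of the regular $n$-gon to its $n$ vertices, cutting it into $n$ congruent isosceles triangles; each has base equal to the side length $P/n$ and apothem $\frac{P}{2n}\cot(\pi/n)$ (from the right triangle with opposite side $\frac{P}{2n}$ and adjacent angle $\pi/n$, via \cref{LoC} or elementary trigonometry), so the total area is $\tfrac12\,P\cdot\frac{P}{2n}\cot(\pi/n)=\frac{P^2\cot\alpha}{4n}$ with $\alpha=\pi/n$. At $n=2$ this evaluates to $0$, consistent with the continuous extension by $0$ on $[0,2]$.

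Next I would substitute $x=\pi/n$, so that $A(n)=\frac{P^2}{4\pi}\,\phi(x)$ with $\phi(x)=x\cot x$, where $x$ runs over $(0,\pi/2]$ as $n$ runs over $[2,\infty)$, $dx/dn=-\pi/n^2<0$, and $d^2x/dn^2=2\pi/n^3$. The chain rule gives $A'(n)=\frac{P^2}{4\pi}\phi'(x)\,\frac{dx}{dn}$, and collecting terms and using $\pi/n=x$ yields
\[
A''(n)=\frac{P^2}{4n^3}\bigl(x\phi''(x)+2\phi'(x)\bigr)=\frac{P^2}{4n^3}\cdot\frac1x\bigl(x^2\phi'(x)\bigr)'.
\]
Since the prefactors are positive on $(2,\infty)$, it suffices to show $\phi'(x)<0$ and $\bigl(x^2\phi'(x)\bigr)'<0$ on $(0,\pi/2)$; strictness at the left endpoint $n=2$ then follows from continuity of $A$ together with one-sided differentiability there.

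For $\phi'<0$: compute $\phi'(x)=\cot x-x\csc^2 x=(\sin x\cos x-x)/\sin^2 x$, whose numerator equals $\tfrac12\sin 2x-x<0$ for $x>0$; hence $A'(n)>0$ and $A$ is strictly increasing on $[2,\infty)$. For $\bigl(x^2\phi'(x)\bigr)'<0$: differentiating $x^2\phi'(x)=x^2\cot x-x^3\csc^2 x$ and factoring out the positive quantity $2x/\sin^3 x$ leaves
\[
\bigl(x^2\phi'(x)\bigr)'=\frac{2x}{\sin^3 x}\,F(x),\qquad F(x):=\cos x\,(\sin^2 x+x^2)-2x\sin x.
\]
Now $F(0)=0$, and a short computation (using $2\cos^2 x-2=-2\sin^2 x$) collapses its derivative to $F'(x)=-\sin x\,(3\sin^2 x+x^2)$, which is strictly negative on $(0,\pi/2)$; hence $F<0$ there, so $A''(n)<0$ and $A$ is strictly concave on $[2,\infty)$.

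The only step that is not pure bookkeeping is this last one: the identity $\bigl(x^2\phi'(x)\bigr)'=\frac{2x}{\sin^3 x}F(x)$ and the remarkably clean form $F'(x)=-\sin x\,(3\sin^2 x+x^2)$ with $F(0)=0$ are exactly what make concavity fall out. I would verify these two algebraic simplifications with care, since a dropped sign is the likeliest pitfall — a numerical spot-check at $x=\pi/4$ and $x=\pi/2$ confirms both the factorization and the derivative. Everything else (the area formula, the chain-rule reduction, and the sign of $\tfrac12\sin 2x - x$) is routine.
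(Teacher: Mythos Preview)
Your proof is correct and lands on the same key computation as the paper: both reduce concavity to the sign of $F(\alpha)=\cos\alpha\,(\sin^2\alpha+\alpha^2)-2\alpha\sin\alpha$, observe $F(0)=0$, and verify $F'(\alpha)=-\sin\alpha\,(3\sin^2\alpha+\alpha^2)<0$ (the paper carries an extra factor of $2$ throughout). The organizational differences are minor: you derive the area via the apothem rather than the circumradius, you package the second-derivative computation cleanly via the substitution $x=\pi/n$ and the identity $x\phi''+2\phi'=(x^2\phi')'/x$, and you prove monotonicity directly from $\phi'<0$, whereas the paper deduces it more economically from strict concavity together with $A(n)>0$ for $n>2$ (a strictly concave function that stays positive on a ray cannot ever have negative derivative). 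One small caveat: your parenthetical appeal to \cref{LoC} is misplaced, since that is the \emph{hyperbolic} law of cosines and this proposition is purely Euclidean; ``elementary trigonometry'' is the right justification.
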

\begin{proof}
Let $R$ be the circumradius of the regular $n$-gon of perimeter $P$. Its area is \[ \frac{n}{2}R^2\sin2\alpha.\] But \[\sin\alpha=\frac{P}{2Rn},\] and a simple substitution yields the claimed expression for $A(n)$. Its second derivative with respect to $n$ is \[\frac{P^2}{4} \cdot \frac{n^2[2\cot\alpha\left(1+\alpha^2\csc^2\alpha)\right)-4\alpha \csc^2\alpha]}{n^5}.\]

The numerator can be rewritten as
\[\frac{P^2n^2}{\sin^3\alpha} \cdot \left(2\cos \alpha\cdot (\sin^2\alpha+\alpha^2)-4\alpha\sin\alpha\right) \tag{\text{$\star$}}.\]

The derivative (with respect to $\alpha$) of the term in parentheses
\[-2\alpha^2 \sin\alpha-6\sin^3\alpha,\] is negative over $0<\alpha\le \pi/2.$ Since the term in the parentheses is zero at $\alpha=0,$ it follows that $(\star)$ and hence the second derivative of $A(n)$ are negative for $0 < \alpha \le \pi/2.$

Finally, strict monotonicity of $A(n)$ follows from strict concavity, since $A(n)$ remains positive for $n>2.$
\end{proof}
\begin{lemma}
\label{euc-A(n/2)}
Fix $P>0.$ For all real $n\ge 6,$ \[A(n) <2A\left(\frac{n}{2}\right).\]
\end{lemma}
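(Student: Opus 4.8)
The plan is to work directly from the closed-form expression $A(k)=P^{2}\cot(\pi/k)/(4k)$ of \cref{lem:euc-area-n} rather than from its concavity; the concavity alone is in fact \emph{not} enough here, since $A(x)/x$ fails to be monotone on $[3,\infty)$ (it is increasing near $x=3$), so neither the tangent-line estimate at $x=n/2$ nor a convexity argument anchored at the zero $x=2$ closes the gap.

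First I would substitute $m=n/2$. Since $n\ge 6$ is real, $m\ge 3$, and the claim $A(n)<2A(n/2)$ becomes $A(2m)<2A(m)$; here both arguments exceed $2$, so the closed form applies and no attention to the flat extension on $[0,2]$ is needed. Cancelling the common factor $P^{2}/(8m)$, the inequality is equivalent to $\cot(\pi/(2m))<4\cot(\pi/m)$. Now set $\beta=\pi/(2m)$, so that $\pi/m=2\beta$ and $\beta\in(0,\pi/6]$ because $m\ge 3$. Applying the double-angle identity $\cot(2\beta)=\tfrac12(\cot\beta-\tan\beta)$, the inequality $\cot\beta<4\cot(2\beta)$ rearranges to $\cot\beta<2\cot\beta-2\tan\beta$, that is, to $2\tan^{2}\beta<1$.

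It then remains only to observe that, since $\tan$ is increasing on $(0,\pi/2)$ and $\beta\le\pi/6$, we have $\tan\beta\le\tan(\pi/6)=1/\sqrt{3}$, whence $2\tan^{2}\beta\le 2/3<1$, completing the argument. The one genuine obstacle is the very first step: recognizing that the concavity of \cref{lem:euc-area-n} is too weak for this comparison and that one must pass to the explicit trigonometric identity; everything after that is a sequence of one-line rearrangements.
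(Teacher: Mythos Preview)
Your proof is correct and follows essentially the same route as the paper's: both cancel the common factor to obtain $\cot(\pi/n)<4\cot(2\pi/n)$ and then rearrange trigonometrically, with the paper writing the final condition as $\cos^{2}(\pi/n)>2/3$ and you writing the equivalent $2\tan^{2}(\pi/n)<1$. Your preliminary remark about concavity being insufficient is extra commentary not present in the paper, but the core argument is the same one-line reduction.
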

\begin{proof}
The desired inequality simplifies to
\[\cot(\pi/n) < 4\cot(2\pi/n),\]
and for $n>4$ further rearranges to
\[\frac{2}{3}<\cos^2(\pi/n),\]
which is true for $n \ge 6$.
\end{proof}

\begin{proposition}
\label{hexagons}
Consider a curvilinear polygonal tiling of a flat torus with $N$ tiles of average area $A$ and no more perimeter than the regular hexagon $R_6$ of area $A$. Then every tile is equivalent to $R_6.$
\end{proposition}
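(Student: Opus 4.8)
The plan is to run a perimeter/area competition argument entirely analogous to the hyperbolic case, but now exploiting the \emph{concavity} of the Euclidean area function $A(n)$ from \cref{lem:euc-area-n}. First I would pass from the given curvilinear tiling to the convex hulls $\poly{Q}_i^*$ of the degree-$\geq 3$ vertices, as in \cref{euc-cover}: this collection covers $M$, each $\poly{Q}_i^*$ has no more perimeter than the corresponding curvilinear tile $Q_i$ (straightening edges and flattening degree-$2$ vertices only decreases perimeter), and the average number of sides $\bar n$ of the $\poly{Q}_i^*$ is at most $6$ by \cref{euc-vertexdegrees}. Let $n_i$ be the number of sides of $\poly{Q}_i^*$ and $P_i$ its perimeter; since $\poly{Q}_i^*$ is a convex $n_i$-gon, its area is at most that of the regular $n_i$-gon of the same perimeter, i.e.\ at most $A(n_i)$ evaluated at perimeter $P_i$.

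Next I would assemble the global inequality. Because the $\poly{Q}_i^*$ cover $M$, we have $\sum_i \operatorname{Area}(\poly{Q}_i^*) \geq \operatorname{Area}(M) = NA$. On the other hand $\operatorname{Area}(\poly{Q}_i^*) \leq A(n_i; P_i)$ where $A(\cdot;P)$ denotes the regular-polygon area with perimeter $P$, and $P_i \leq P(Q_i) \leq P(R_6)$ by hypothesis. Since $A(n;P)$ is increasing in $P$, we get $\operatorname{Area}(\poly{Q}_i^*) \leq A(n_i)$ where now $A(n_i)$ is taken at the fixed perimeter $P(R_6)$. Summing, $NA \leq \sum_i A(n_i)$. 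Now invoke concavity of $A$ on $[2,\infty)$ (with the continuous extension making it $0$ on $[0,2]$, so that it is concave on all of $[0,\infty)$): by Jensen's inequality $\frac1N\sum_i A(n_i) \leq A\!\left(\frac1N\sum_i n_i\right) = A(\bar n) \leq A(6)$, the last step because $\bar n \leq 6$ and $A$ is increasing. Hence $NA \leq N A(6)$, i.e.\ $A \leq A(6)$; but $A(6)$ is exactly the area of the regular hexagon of perimeter $P(R_6)$, which is $A$ by hypothesis. So every inequality in the chain is an equality.

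Finally I would run the equality-case analysis backwards through the chain. Equality in Jensen with a strictly concave function forces every $n_i$ to equal the common value $\bar n$, and equality $A(\bar n)=A(6)$ with $A$ strictly increasing forces $\bar n = 6$, so every $\poly{Q}_i^*$ is a hexagon; equality in the isoperimetric bound for convex $n_i$-gons forces each $\poly{Q}_i^*$ to be a \emph{regular} hexagon of perimeter $P(R_6)$, hence congruent to $R_6$; and equality in $\operatorname{Area}(\poly{Q}_i^*) = A(n_i)$ together with $P_i = P(Q_i) = P(R_6)$ forces the straightening/flattening steps to have changed nothing, so each $Q_i$ is already equivalent to its convex hull $\poly{Q}_i^* \cong R_6$; finally the covering must be a genuine tiling with no overlap, so each $Q_i \sim R_6$. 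One subtlety: \cref{euc-vertexdegrees} gives $\bar n \le 6$ only when I use that flattening degree-$2$ vertices of $Q_i$ produces $\poly{Q}_i^*$ with exactly the degree-$\geq 3$ vertices as corners; I would note that a degree-$2$ vertex of the \emph{new} immersed polygon can only arise from an angle of measure $\pi$, which does not add a genuine corner, so the side count is controlled as claimed.

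\textbf{Expected main obstacle.} The delicate point is the covering-versus-tiling bookkeeping: the $\poly{Q}_i^*$ only \emph{cover} $M$ (they may overlap or, after flattening, be immersed rather than embedded), so I must be careful that the area inequality $\sum \operatorname{Area}(\poly{Q}_i^*) \ge NA$ is still valid (it is, since a covering has total area at least $\operatorname{Area}(M)$) and that in the equality case the covering is forced to be an honest edge-to-edge tiling by regular hexagons. Handling the immersed case — making sure ``area'' counts multiplicity correctly and that equality kills the multiplicity — is where I expect to spend the most care; everything else is the concavity/Jensen computation, which \cref{lem:euc-area-n} and \cref{euc-A(n/2)} have set up.
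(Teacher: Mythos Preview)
Your approach is essentially the paper's: pass to the convex hulls $\poly{Q}_i^*$ of the degree-$\geq 3$ vertices, use the covering to lower-bound total area, use the regular-polygon isoperimetric inequality and Jensen on $A(n)$ to upper-bound it, and run the equality analysis.

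There is, however, one genuine gap. You assert that extending $A(n)$ by $0$ on $[0,2]$ makes it concave on all of $[0,\infty)$, and then apply Jensen on that full domain. This is false: with $n_1=0$ and $n_2=4$, concavity would force $A(2)\geq \tfrac12(A(0)+A(4))=\tfrac12 A(4)>0$, but $A(2)=0$. (Equivalently, the right derivative of $A$ at $n=2$ is $P^2\pi/32>0$ while the left derivative is $0$, so the derivative jumps upward.) Since a tile may a priori have $0$ or $1$ vertices of degree $\geq 3$, the values $n_i\in\{0,1\}$ cannot be excluded, and your Jensen step does not cover them.

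This is precisely what \cref{euc-A(n/2)} is for, and the paper invokes it at exactly this point: whenever some $n_i\in\{0,1\}$ (so $A(n_i)=0$), pair it with some $n_j\geq 6$ and use $A(n_i)+A(n_j)=A(n_j)<2A(n_j/2)$ to replace the pair $(n_i,n_j)$ by $(n_j/2,n_j/2)$. This increases $\sum A(n_k)$ and does not increase $\sum n_k$, so it suffices to establish the Jensen bound for the modified list, which now lies entirely in $[2,\infty)$ where strict concavity from \cref{lem:euc-area-n} applies. If no $n_j\geq 6$ remains, the average is already below $6$ and the bound $\sum A(n_k)\leq N\,A(6)$ holds directly. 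With this repair your argument coincides with the paper's proof.
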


\begin{proof}
Let $P$ be the perimeter of the regular hexagon of area $A$. By \cref{euc-cover}, the collection of convex hulls $\poly{Q}_i^*$ of the vertices with degree at least $3$ on each tile covers $M$, and of course $P(\poly{Q}_i^*) \leq P(\poly{Q}_i) \leq P$ by assumption.  
Since the $\poly{Q}_i^*$ cover, 
\begin{equation}
\label{euc-ave-area}
\frac{1}{N}\sum \text{Area}(\poly{Q}_i^*) \ge A.
\end{equation}
By \cref{euc-cover}, the number of sides $n_i$ of $Q_i^*$ satisfy 
\[\frac{1}{N}\sum n_i \le 6.\]
The areas can be estimated in terms of $A(n)$ for $P$ as
\begin{equation}
\sum \text{Area}(\poly{Q}_i^*) \le \sum A(n_i) \le
N \cdot A \left(\frac{\sum n_i}{N} \right) \le 
N \cdot A(6) = 
N \cdot A.
\label{eq:euc-avehullarea}
\end{equation}
The first inequality follows from the well-known fact that regular (Euclidean) $n$-gons maximize area for given perimeter.
The second inequality follows from the concavity of $A(n)$ for $n\geq 2$ (\cref{lem:euc-area-n}) and Jensen's inequality. 
If any of the $n_i$ are $0$ or $1$, choose some $n_i \ge 6$, and use \cref{euc-A(n/2)} first to replace $0+A(n_i)$ with $2A(n_i/2)$. 
If you run out of large enough $n_i$, the next inequality holds already.
The third inequality follows from the fact that $A(n)$ is strictly increasing (again \cref{lem:euc-area-n}). The final equality holds by the definition of $A(n)$ for $P$.

By \cref{euc-ave-area}, equality must hold in every inequality.
By the strict concavity of $A(n)$, equality in the second inequality implies that every $n_i = 6$. Equality in the first inequality implies that every $\poly{Q}_i^*$ has area $A$. Since regular hexagons uniquely maximize area, $\poly{Q}_i^*$ is the regular hexagon $\poly{R}_6$ of area $A$. Finally 
\[P(\poly{Q}_i) \ge P(\poly{Q}_i^*) = P,\]
and equality implies that $\poly{Q}_i \sim \poly{R}_6$.
\end{proof}

\printbibliography

\end{document}